\documentclass[11pt]{article}

\usepackage[margin=1in]{geometry}
\usepackage{amsmath,amssymb,amsfonts}
\usepackage{graphicx}
\usepackage{booktabs}
\usepackage{hyperref}
\usepackage{authblk}
\usepackage[numbers,sort&compress]{natbib}

\graphicspath{ {./images/} }

\usepackage{framed,multirow}

\usepackage{amssymb}
\usepackage{latexsym}
\usepackage{amsmath}
\usepackage{amsthm}

\usepackage{url}
\usepackage{xcolor}
\definecolor{newcolor}{rgb}{.8,.349,.1}
\definecolor{green}{rgb}{0.463,0.725,0.0}

\usepackage[normalem]{ulem}

\usepackage{subcaption}
\usepackage{yhmath}
\usepackage{siunitx}

\newcommand{\bx}{\boldsymbol{x}}
\newcommand{\by}{\boldsymbol{y}}
\newcommand{\bz}{\boldsymbol{z}}
\newcommand{\bv}{\boldsymbol{v}}

\newtheorem{theorem}{Theorem}[section]
\newtheorem{prop}{Proposition}[section]

\begin{document}

\title{\bfseries Deep‑Learning Solvers and Surrogates for Infinity and p-Laplace Problems}


 \author[2]{Tak Shing Au Yeung}
 \author[2]{Ka Chun Cheung}
 \author[1]{Hannah Potgieter\thanks{Corresponding author: hannahpotgieter1@gmail.com}}
 \author[1]{Steven J. Ruuth}
 \affil[1]{Department of Mathematics, Simon Fraser University, Burnaby, British Columbia, Canada}
 \author[3]{Simon See}
 \affil[2]{NVIDIA AI Technology Center, NVIDIA, Hong Kong, China}
 \affil[3]{NVIDIA AI Technology Center, NVIDIA, Singapore, Singapore}

\date{\today}

\maketitle

\vspace{-2em}

\begin{abstract}
We investigate the use of neural network solvers for infinity and $p$-Laplace problems, which are fundamental in nonlinear analysis and have practical applications. Our approach employs Physics-Informed Neural Networks (PINNs) and Deep Operator Networks (DeepONets) to address computational challenges associated with large $p$ values, ranging from $2$ to $1000$, on various 2D and 3D domains. Our method offers advantages over traditional physics-based solvers,  especially in three dimensions where mesh-based solvers become very costly for these problems. We also establish conditional convergence results for PINN approximations of both problems and a universal approximation result for DeepONet on the parametric $p$-Poisson problem. We demonstrate the effectiveness of these neural network solvers through numerical experiments and compare their performance with conventional methods.
\vspace{0.5cm}

\textbf{Keywords:}
$p$-Laplacian, Infinity Laplacian, Physics-informed neural networks, Deep operator networks, Operator learning, Large-$p$ limit

\end{abstract}

\maketitle

\section{Introduction} 
\label{section:intro}

Infinity and $p$-Laplace problems are of significant interest in both mathematical analysis and for their practical applications.
The $p$-Laplace operator lies at the heart of nonlinear analysis~\cite{Drbek2007The}.
The infinity Laplace operator is closely related and may be formally derived from taking the $p \to \infty$ limit of a
$p$-Laplace problem subject to homogeneous Dirichlet boundary conditions~\cite{bhattacharya, jensen1993uniqueness}. This limiting process is rigorously understood in a viscosity solution framework. Thus, it is natural to consider infinity and $p$-Laplace problems jointly.
Moreover, many applications call for the use of large $p$ values such as image processing~\cite{Blomgren,Chen2006}, distance function approximation~\cite{fayolle2018p,Potgieter}, and optimal transport~\cite{fayolle2018p}.

Neural network solvers for partial differential equations (PDEs) have gained considerable attention, see for example~\cite{lu2021learning,lu2021physics,RAISSI2019686}.
Traditional physics-based solvers for $p$-Laplace problems present unique challenges, and in particular, large $p$ values
often give rise to ill-conditioned systems.
The infinity Laplacian lacks divergence structure, so accurate numerical computation with nonsmooth data is computationally
expensive and slow to converge~\cite{OBERMANsecond}.
Moreover, popular ``traditional'' solvers such as Oberman's method~\cite{OBERMANsecond} become very expensive
beyond two-dimensional problems.
These difficulties motivate the exploration of alternative numerical approaches that may offer improved scalability or flexibility, particularly in three dimensions, where mesh-based solvers for the $p$-Laplacian become very costly.
We seek to assess the performance of neural network solvers for infinity and $p$-Laplace problems and determine their
potential advantages and limitations relative to established numerical methods.

Physics-Informed Neural Networks (PINNs) were first introduced in~\cite{RAISSI2019686,raissi2017physics} to approximate solutions of nonlinear partial differential equations by training neural
networks that incorporate physical laws through the loss function.
The PINN framework embeds the governing PDE as a soft constraint, serving as a regularization mechanism that restricts the
admissible solution space.
This physics-informed loss can reduce the need for large amounts of training data.
Since their introduction, PINNs have been applied to a wide range of problems, including Poisson equations,
advection-diffusion systems, high-speed flows, and heat transfer~\cite{cai2021physics,kharazmi2021hp,lu2021deepxde,mao2020physics,wang2021understanding}.

Existing convergence theory for PINNs is primarily restricted to linear elliptic and parabolic equations,
with extensions to certain nonlinear systems under strong regularity and stability assumptions~\cite{Shin2020PINNConvergence, DeRyck2024PINNSError}. Degenerate elliptic operators such as the $p$-Laplacian and the infinity Laplacian generally fall outside the scope of these results. In this work, we extend the theoretical framework by establishing convergence results for PINN approximations for $p$-Laplace and the infinity Laplace problems under some assumptions that we will state precisely later. We also assess accuracy and robustness empirically through comparison with reference solutions.

Our focus is on the $p$-Poisson problem, which generalizes the classical Poisson problem recovered when $p=2$.
In~\cite{lu2021physics}, a variant known as physics-informed neural networks with hard constraints (hPINNs) was introduced to
address limitations arising from the soft enforcement of constraints in standard PINNs, meaning the solutions may not strictly satisfy the constraints. The authors proposed penalty and augmented Lagrangian strategies in which penalty parameters are increased iteratively,
using the solution from the previous stage as an initial guess for subsequent training. These continuation-based ideas inspire the training strategies adopted in the present work, particularly
for stabilizing optimization in the large-$p$ regime, although we do not claim theoretical convergence guarantees in this
setting.

Additionally, but with a different focus, we employ neural networks to learn the $p$-Laplace operator. Operator learning networks map between infinite-dimensional function spaces, taking input functions such as initial conditions, boundary conditions, or PDE coefficients, and producing the corresponding (approximate) PDE solution as the output function. Several operator learning architectures have been proposed, including the Fourier neural operator (FNO)~\cite{li2020fourier}, convolutional neural operators (CNO)~\cite{raonic2024convolutional}, graph neural operators~\cite{li2020neural}, and deep operator networks (DeepONet)~\cite{lu2021learning}. FNO parameterizes the operator using spectral layers acting on Fourier modes, enabling efficient learning of mappings between function spaces. CNO is a convolutional architecture designed to preserve certain structural properties across discretizations, facilitating the learning of continuous operators. Graph neural operators employ graph-based architectures and message passing mechanisms to learn operators on discretized domains. Finally, DeepONet is motivated by a universal approximation theorem for nonlinear operators and has demonstrated strong performance in learning operators for a range of parameter-dependent PDEs from relatively small datasets~\cite{lu2021learning, Chen1995OperatorUniversal}. In this work, DeepONet is employed as a surrogate model trained on solution data generated by conventional numerical solvers or PINNs for fixed values of $p$. The resulting network is used to interpolate solutions across parameter values, rather than as a standalone PDE solver. Based on the generalization behavior of DeepONets observed in~\cite{zhu2023reliable}, we investigate their ability to approximate $p$-Laplace solutions for large $p$ and to capture the $p\to\infty$ limiting behavior.

The main contributions of this work are as follows. We provide the first systematic PINN and DeepONet study of infinity and $p$-Laplace problems over the range $p=2$ to $1000$, together with conditional convergence results for PINN approximations of both problems. We introduce stabilization and continuation strategies for training in the large-$p$ regime, including $\eta$-normalization or regularization, residual clipping, top-$2\%$ outlier removal, gradient-norm clamping, and continuation in $p$, and quantify the effects of the stabilization components through ablation studies in~\ref{app:ablation} and~\ref{app:ablation-plaplace}. We also investigate DeepONet generalization across parameter values and a family of geometries, which provides flexibility over traditional mesh-based solvers.

In Section~\ref{section:PDE}, we introduce the infinity and $p$-Laplace operators and the associated boundary value problems
under consideration.
Section~\ref{section:data} describes the generation of reference solutions using a Newton-based solver with a finite element method.
Section~\ref{section:PINNS} presents the PINN formulation together with a convergence analysis under appropriate structural assumptions, while Section~\ref{section:DeepONet} describes the DeepONet approach along with analytical support. 
Numerical experiments are performed in Section~\ref{section:expts}. Finally, we provide a brief concluding discussion in Section~\ref{section:conclusion}.

\section{Model problems and boundary value formulations}
\label{section:PDE}

We consider the Dirichlet problem associated with the infinity Laplace operator.
The homogeneous case is connected to absolutely minimizing Lipschitz extensions, which are useful
for inpainting processes~\cite{Chen2006}.
We often approximate problems involving the infinity Laplacian via the $p$-Laplacian with large
$p$ values.
This approach is motivated by the variational structure and divergence form of the
$p$-Laplacian, which enable the use of finite element numerical discretizations for finite $p$.
Some applications include geodesic distance approximation~\cite{Potgieter}, optimal transport~\cite{fayolle2018p}, and shape optimization~\cite{shapeOpt}.

The $p$-Laplace operator is defined as
\begin{align*}
\Delta_{p} u_p = \nabla \cdot \left( \left|\nabla u_p\right|^{p-2}\nabla u_p\right).
\end{align*}
If we consider Dirichlet boundary conditions, we arrive at
\begin{align*}
-\Delta_{p} u_p &=  f,  \qquad\text{in }\Omega,\\
u_p &= g,  \qquad\text{on } \partial\Omega,
\end{align*}
which we refer to as a $p$-Poisson problem.

The infinity Laplacian arises formally in the limit as $p\to\infty$ of the (homogeneous)
$p$-Laplacian.
We have
\begin{align*}
\Delta_{\infty} u_{\infty} = (\nabla u_{\infty})^T D^2u_{\infty} \, \nabla u_{\infty},
\end{align*}
where $\nabla u$ denotes the gradient of $u$ and $D^2u$ denotes the Hessian matrix of $u$.
We emphasize that this operator does not admit a divergence form and is interpreted
in the viscosity sense. Moreover, the infinity Laplacian appears as a limiting equation only under
specific assumptions on the source term and boundary conditions.

We also consider the application of distance function approximation via the $p$-Laplacian. Distance function approximation is an important step for a number of applications including
surface reconstruction~\cite{Calakli}, medical imaging~\cite{jones20063d}, and computational mechanics~\cite{BISWAS2004215}. Further, there are many test problems which may be constructed for distance function approximation. The nonsmooth nature of distance functions, for example the distance to the origin in one dimension given by $|x|$ which is Lipschitz but not differentiable at the origin, also provides insight into the neural network solver robustness against limited regularity.

The mixed boundary value problem (BVP) for the $p$-Laplacian is given by
\begin{subequations}
\begin{alignat}{3}
\label{eq:1a}  -\Delta_p u_p &= 1, \quad \, &\text{ in } \Omega,  \\[3pt]
\label{eq:1b} u_p &= 0, \qquad &\text{ on } \Gamma_1 \subseteq \partial \Omega,  \\[3pt]
\label{eq:1c}   \frac{\partial u_p}{\partial n} & = 0, \qquad &\text{ on } \Gamma_2 \subseteq \partial \Omega.
\end{alignat}
\end{subequations}
where $\Omega$ is an open set, $\Gamma_1$ denotes the `feature' set to which we want to compute the geodesic distance, and $\Gamma_2$  represents the remaining unconstrained portion of the boundary. Figure~\ref{fig:dist_visual} visualizes an example of a suitable domain.

\begin{figure}
    \centering
    \includegraphics[width=0.5\linewidth]{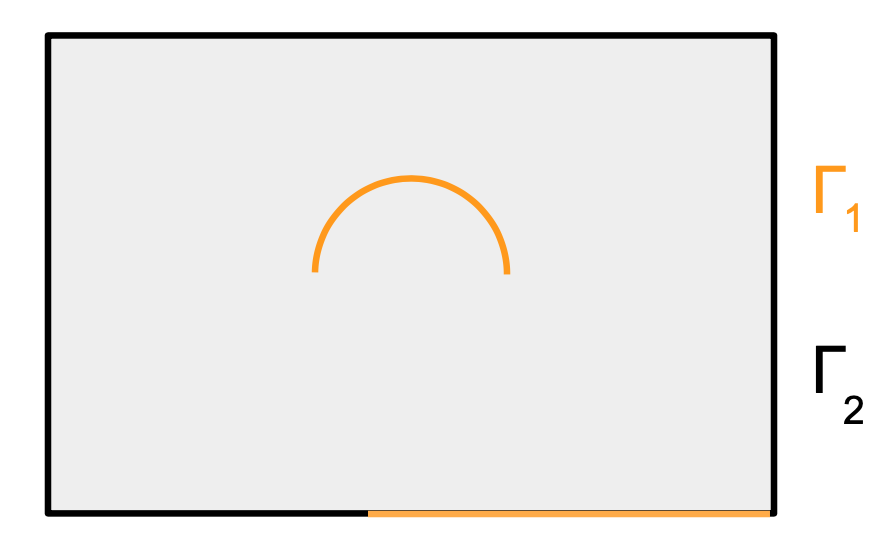}
    \caption{\small Example of a suitable 2D domain: $\Omega$ (grey) is an open subset of a rectangle , $\Gamma_1$ (orange) is the `feature' set where the Dirichlet condition is imposed, and  $\Gamma_2$ (black) is where the Neumann condition is imposed. } 
    \label{fig:dist_visual}
\end{figure}

We know that
\begin{equation*}
     \lim_{p \to \infty} u_p(\bx) = \mathrm{dist} (\bx, {\Gamma_1}),
\end{equation*}
where the limit is understood in an appropriate weak or viscosity sense~\cite{bhattacharya, Azorero_etal2009-mlimits}. Hence, large $p$ values are of particular interest for distance approximation. Potgieter et al.~\cite{Potgieter} report numerical experiments suggesting that $u_p$ approaches
$\mathrm{dist}(\bx,\Gamma_1)$ at an empirical rate of $\mathcal{O}(1/p)$ for the problems considered.
Distance approximations obtained from the $p$-Laplacian exhibit desirable properties such as
robustness to geometric noise and adherence to the triangle inequality~\cite{Potgieter}.

However, large $p$ values are highly susceptible to initial guess sensitivity and ill-conditioning in systems arising from iterative solvers, making them
challenging to solve using traditional numerical methods~\cite{fayolle2018p,caliari2017quasi,toulopoulos2017numerical}. Motivated by these challenges, we investigate neural network based numerical approaches for approximating solutions of $p$-Laplace and infinity Laplace problems. In this work, neural networks are employed as numerical solvers and surrogate models whose performance is assessed through systematic numerical experiments. In addition, we provide a convergence analysis for the PINN approximations under structural assumptions that are natural from an analytical perspective, but are not directly enforced in the practical implementation. The practical performance of the methods is therefore also evaluated empirically.

In particular, we employ PINNs to approximate
solutions of fixed PDE instances using a small number of collocation points, and DeepONets to learn parametric mappings from problem data and parameters (such as $p$ or
domain descriptors) to solution fields. Operator learning theory provides universal approximation results for nonlinear operators~\cite{lu2021learning,Chen1995OperatorUniversal}. In Proposition~\ref{prop:deeponet_parametric_p_poisson}, we specialize these results to the parametric $p$-Poisson problem, showing that, under a continuity assumption on the solution map, a DeepONet can approximate the associated operator on compact parameter sets. However, rigorous convergence analyses for degenerate elliptic equations such as the $p$-Laplacian and the infinity Laplacian remain limited, particularly in the regimes considered here. 
The primary focus of this work is on numerical evaluation.
Newton-based FEM solutions are used to generate training data for DeepONet and to provide numerical comparison baselines. When a known infinity-Laplace solution is available, the neural-network are evaluated against this limiting solution. We study the resulting behavior across a range of $p$ values and geometries, including large $p$ values relevant to distance approximation.

\section{Data generation}
\label{section:data}

Problems involving the infinity Laplacian are often numerically approximated by using the $p$-Laplacian with large $p$ values. This is largely due to the divergence structure of $\Delta_p$ enabling us to implement a variational finite element solver. Pryer~\cite{Pryer2018} shows that finite element approximation of the homogeneous Dirichlet $p$-Laplace problem converges to the homogeneous Dirichlet infinity Laplace problem as $p \to \infty$. Accordingly, we employ a $p$-Laplacian finite element solver to generate reference solutions for
both finite $p$ and limiting $p \to \infty$ cases.

Using a finite element method (FEM) is advantageous over finite differences for nonsmooth problems. Accordingly, we use a $p$-Laplacian FEM solver for both $p$-Laplace and homogeneous infinity Laplace problems. This approach is widely regarded as a standard one for solving nonlinear elliptic PDEs such as those involving the $p$-Laplacian in the variational regime (see, e.g.,~\cite{Pryer2018, BarrettLiu, Huang2007PreconditionedDA}). Another numerical approach is Oberman's~\cite{OBERMANsecond} monotone scheme which is shown to converge to the unique weak ($p$-Laplacian) or viscosity (infinity Laplacian) solution. While Oberman's~\cite{OBERMANsecond} method may be advantageous over Newton-based FEM when a mesh is not available, the convergence rate is dependent on angular resolution of the computational grid stencil making it prohibitively expensive at the angular resolutions required in 3D.

Let $\Omega_h$ be a discrete representation of the domain $\Omega \subset \mathbb{R}^d$, with associated mesh $\mathcal{T}_h$ and mesh size $h := \max_{K \in \mathcal{T}_h} \operatorname{diam}(K)$. We define the finite element space
\[
V_h := \{ v_h \in C^0(\Omega_h) \mid v_h|_K \in \mathbb{Q}_1(K), \; K \in \mathcal{T}_h \},
\]
where $\mathbb{Q}_1(K)$ denotes bilinear elements in two dimensions and trilinear elements in three dimensions. When Dirichlet boundary conditions are present, the finite element space is understood to incorporate these in the usual way. We restrict our attention to first-order elements, as solutions of the $p$-Laplace equation and related problems are generally not $C^2$ for $p>2$. 

Since our $p$-Laplace problem is nonlinear, we need an iterative method for approximating solutions. We use a damped Newton iteration in order to solve the nonlinear $p$-Poisson problem. Our implementation is very similar to those shown in~\cite{BarrettLiu},~\cite{Huang2007PreconditionedDA}, and~\cite{AlvarezFlores}. We implement the code in C++ using the \texttt{deal.II}~\cite{dealII96} (version 9.6.1) finite element library and generate quadrilateral meshes (in $2$D) or hexahedral (in $3$D) elements using the built-in \texttt{GridGenerator} utilities. 

For the iteration updates, we seek solutions of 
\begin{equation*}    
 F(u) := \nabla \cdot \left(\gamma(u; \eta) \nabla u\right) + f = 0,
\end{equation*}
where the coefficient with small regularization constant $\eta > 0$ is given by
\begin{equation}
\label{eqn:gamma4}
     \gamma(u; \eta) := (\eta^2+|\nabla u|^2)^{\frac{p-2}{2}} > 0.
\end{equation}
We solve for the update at Newton iteration $n+1$ (denoted by superscripts) via
\begin{align*}
    F'(u^n)[\delta u^n] & = - F(u^n) \label{NewtonIt14}, \\
    u^{n+1} & = u^n + \beta^n \delta u^n,
\end{align*}
where $F'(u)[\delta u]$ is the derivative of $F$ in direction of $\delta u$.
Here, $\eta$ is a small regularization constant which ensures positive-definiteness of the generalized stiffness matrix appearing in the resulting linearized problem, allowing a conjugate gradient (CG) solver to be used.
In our experiments we take $\eta = 10^{-5}$. The damping parameter, $0< \beta^n \leq 1$, is chosen at each iteration via a quadratic interpolation line search with a sufficient decrease condition~\cite{nocedal2006numerical}. 
That is, we find $0< \beta^n \leq 1$ such that
\begin{equation}
\label{eqn:line-search4}
    E_p(u^n + \beta^n \delta u^n; \Omega_h) \leq E_p(u^n; \Omega_h) + c_1 \beta^n E_p'(u^n; \Omega_h) [\delta u^n], 
\end{equation}
where $c_1 = 10^{-3}$ is the prescribed tolerance, $E_p$ is the discrete $p$-Poisson energy given by 
\[E_p(u; \Omega_h) = \frac1p \int_{\Omega_h} |\nabla u|^p d x_h - \int_{\Omega_h} f u \, d  x_h,\] 
and $E_p'(u) [\delta u]$ is the derivative of $E_p$ in the direction of $\delta u$. 

This gives the Newton iteration equations as 
\begin{subequations} 
\begin{alignat}{2}
       \nabla \cdot \bigg( \gamma(u^n; \eta) \nabla \delta u^n+ (p-2)\gamma(u^n; \eta) & \frac{\nabla u^n \cdot \nabla \delta u^n}{\eta^2+|\nabla u^n|^2}  \nabla u^n\bigg)
     \label{eq:pLapNewtonIt14} 
     &= - \nabla \cdot \bigg( \gamma(u^n; \eta) \nabla u^n\bigg) -f  , \\
     u^{n+1} &= u^n + \beta^n \delta u^n. 
    \end{alignat}   
\end{subequations}

The weak formulation of \eqref{eq:pLapNewtonIt14} is obtained by multiplying with a test function $\phi$ and integrating by parts on both sides. Reducing  to a finite dimensional space with basis $\{\phi_h^0, ..., \phi_h^{N-1} \} \in V_h$, we can write the update $\delta u^n \in H^1(\Omega)$, as 
\begin{equation*}
    \delta u^n = \sum_{j = 0}^{N-1} \delta U^n_j \phi_h^j.
\end{equation*}

Similarly to~\cite{BarrettLiu}, we find the resulting linear system for the update coefficients $\delta U^n$  vector at each iteration to be
\begin{equation}
     K^n  \delta U^n = b^n,
    \label{eq:linearSystem4}
\end{equation}
with the entries of the generalized stiffness matrix $K^n$ given by
\begin{equation*}
    K^n_{ij} :=\int_{\Omega_h} \nabla \phi_h^i \cdot  
     \gamma(u^n; \eta) \left ( I + \frac{(p-2)}{\eta^2 + |\nabla u^n|^2}   [\nabla u^n] \otimes [\nabla u^n] \right )\nabla \phi_h^j dx_h,
\end{equation*}
and the entries of the right hand side $b^n$ given by
\begin{equation*}
    b^n_{i} := \int_{\Omega_h} \left (f \phi_h^i - \nabla \phi_h^i \cdot \left( \gamma(u^n; \eta) \nabla u^n \right) \right)  dx_h.
\end{equation*}

The regularization parameter $\eta > 0$ ensures that the linearized operator at each Newton step is uniformly elliptic. In particular, the generalized stiffness matrix $K^n$ is symmetric positive definite, which allows the use of a conjugate gradient solver for the linear system \eqref{eq:linearSystem4}. In practice, we accelerate the conjugate gradient solver using a symmetric successive over-relaxation (SSOR) preconditioner as implemented in the \texttt{deal.II}~\cite{dealII96} finite element library. Specifically, we use the relaxation parameter $\omega = 1.2$, initialized at each Newton step with the current system matrix $K^n$. 

Continuation in $p$, together with line search damping, significantly improve robustness of the solver, particularly for large values of $p$. Starting from $p_0 = 2$, we solve the corresponding problem using Newton's method with initial
guess $u_{p_0,h}^0 = 1$.
Once convergence is achieved, the solution is used as the initial guess for the next value of
$p$.
That is, for a sequence $\{p_i\}$, we perform:
\begin{itemize}
    \item Solve for $u_{p_i,h}^{N_i}$ such that
    \[
    \frac{\|u_{p_i,h}^{N_i} - u_{p_i,h}^{N_i-1}\|_2}
         {\|u_{p_i,h}^{N_i}\|_2}
    \le \mathrm{tol}.
    \]
    \item Increment $p$ via $p_{i+1} = p_i + (\delta p)_{i+1}$.
    \item Initialize the next solve with $u_{p_{i+1},h}^0 = u_{p_i,h}^{N_i}$.
\end{itemize}
In all experiments, we take $p_0 = 2$, $(\delta p)_i = 1$, and $\mathrm{tol} = 10^{-8}$.
We will continue incrementing in $p$ until the algorithm fails to converge, usually due to failure to find a suitable descent direction according to the decrease condition \eqref{eqn:line-search4}. Typically we can take $p$ up to approximately $200$ before the latter occurs. 

The approximate solutions computed using this method serve as high-fidelity reference data for the neural network experiments presented in subsequent sections.
These solutions provide numerical comparison baselines for the PINN experiments and training data for the DeepONet models.

\section{Physics-Informed Neural Networks}
\label{section:PINNS}
We employ physics-informed neural networks (PINNs) with a penalty method~\cite{lu2021physics} as numerical solvers for fixed instances of $p$-Laplace and infinity Laplace type boundary value problems. The foundational convergence analysis for PINNs was established by Shin et al.~\cite{Shin2020PINNConvergence}, who proved consistency and convergence results for linear second-order elliptic and parabolic PDEs under suitable regularity assumptions. These results provide a theoretical framework demonstrating that, with sufficiently expressive neural network architectures and appropriate sampling strategies, PINNs can approximate solutions of classical PDEs. Extensions of PINN analysis to nonlinear PDEs remain significantly more challenging. For example, De Ryck et al.~\cite{DeRyck2024PINNSError} derived rigorous a priori error bounds for PINNs applied to the Navier-Stokes equations. 

The $p$-Laplacian and infinity Laplacian considered in this work fall outside the scope of these existing theoretical results. In particular, these operators are nonlinear, degenerate elliptic, and solutions may exhibit limited regularity, especially in the large-$p$ regime. In this work, we provide a convergence analysis for the PINN approximations in the $p$-Laplace setting under assumptions motivated by the analytical structure of the problem, though these are not explicitly enforced in the practical implementation. Nevertheless, the theory provides important motivation for using PINNs as numerical solvers and suggests that, with sufficiently rich training data and network capacity, accurate approximations may be obtained in practice.

\subsection{Network architecture}

The network architecture is illustrated in Figure~\ref{fig:PINNS}.
It is defined by a series of linear blocks; the linear blocks consists of four linear layers and activation functions with $128$ neurons and the final output layers gives one prediction, $u_\theta$. We use $\tanh$ as the activation function, so that the network outputs are smooth. However, the target solutions of the $p$-Laplace and infinity Laplace problems may exhibit limited regularity, and thus the network approximates potentially nonsmooth data using smooth functions.

\begin{figure}[h]
\centering
\includegraphics[width=0.7\textwidth]{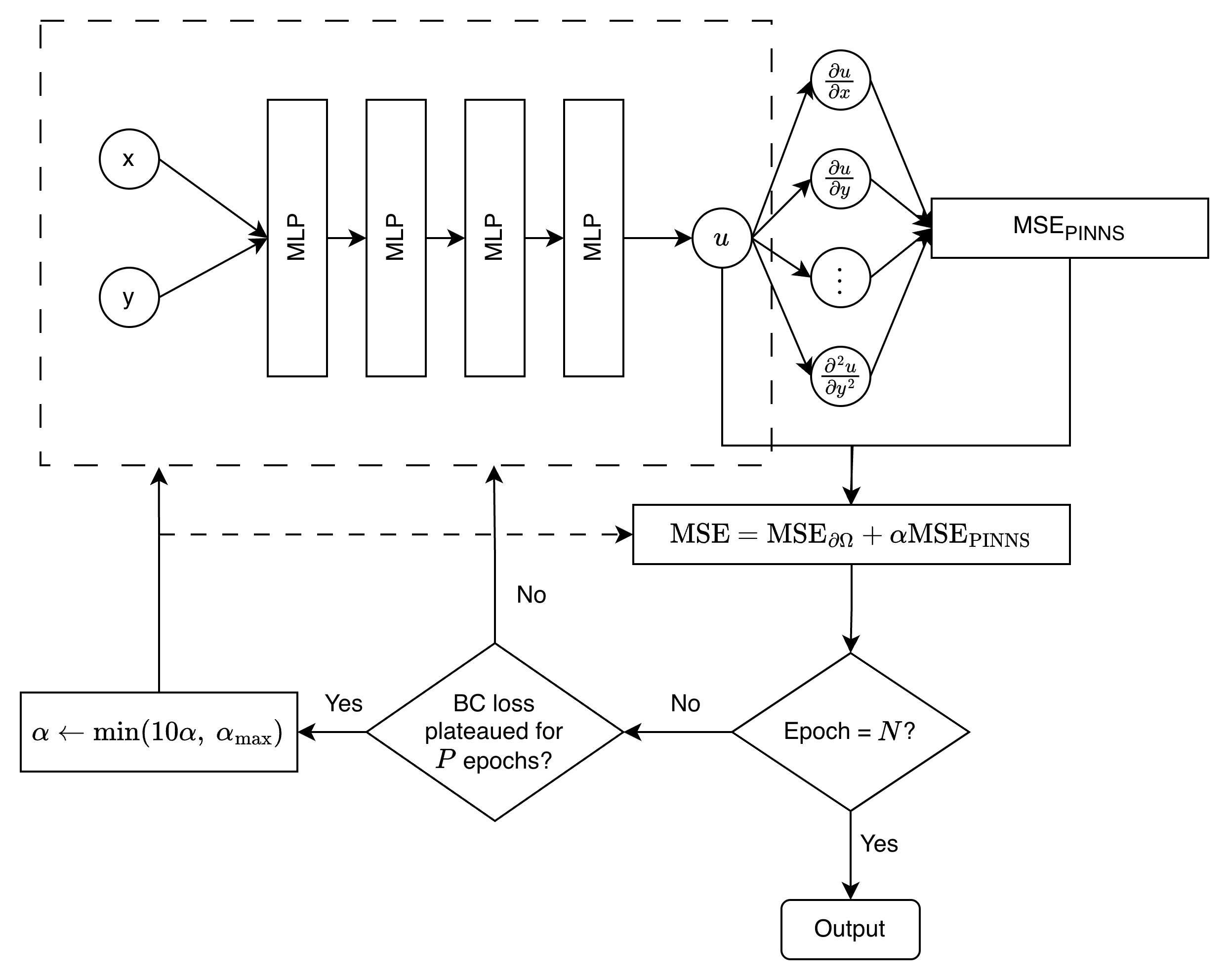}
\caption{\small PINN architecture and training logic for the two dimensional setting. The three dimensional case is analogous.}
\label{fig:PINNS}
\end{figure}

\subsection{Collocation sets and loss terms}

The PINNs are trained using two types of collocation points:
boundary points $\bx_i \in \partial\Omega$ and interior points $\bx_j \in \Omega$.
Let $N_{\partial\Omega}$ denote the number of boundary samples and $N_{\mathrm{int}}$ the number
of interior samples.

\paragraph{Boundary loss}
For Dirichlet conditions $u=g$ on $\partial\Omega$, we define
\begin{equation}
\label{eq:MSE_Dirichlet}
\mathrm{MSE}_{\partial\Omega}
= \frac{1}{N_{\partial\Omega}} \sum_{i=1}^{N_{\partial\Omega}}
\bigl(u_\theta(\bx_i) - g(\bx_i)\bigr)^2 .
\end{equation}

For the mixed problem \eqref{eq:1a}-\eqref{eq:1c} with $u=0$ on $\Gamma_1$ and
$\partial u/\partial n = 0$ on $\Gamma_2$, we use
\begin{equation}
\label{eq:MSE_mixed}
\mathrm{MSE}_{\partial\Omega,\mathrm{mixed}}
=
\frac{1}{N_{\Gamma_1}} \sum_{i=1}^{N_{\Gamma_1}} \bigl(u_\theta(\bx_i)\bigr)^2
+
\frac{1}{N_{\Gamma_2}} \sum_{i=1}^{N_{\Gamma_2}}
\left(\frac{\partial u_\theta}{\partial n}(\bx_i)\right)^2 .
\end{equation}
All boundary derivatives are computed using
\texttt{PyTorch} automatic differentiation~\cite{paszke2017automatic}.

\paragraph{PINNs loss}
To enforce the governing equation at interior collocation points, we define the interior PDE residual loss
\begin{equation}
\label{eq:MSE_PINNS}
\mathrm{MSE}_{\mathrm{PINNS}}
=
\frac{1}{N_{\mathrm{int}}} \sum_{j=1}^{N_{\mathrm{int}}}
\left(\mathcal{N}_p[u_\theta](\bx_j) - f(\bx_j)\right)^2,
\end{equation}
where $\mathcal{N}_p$ denotes the differential operator.
For finite $p$, we take $\mathcal{N}_p[u] := -\Delta_p u$, while for the infinity Laplacian or limiting type
tests (see Section~\ref{section:PDE}) we use the appropriate operator evaluated through automatic
differentiation.

\paragraph{Total loss}
The total training objective is
\begin{equation}
\label{eq:MSE_total}
\mathrm{MSE}
=
\mathrm{MSE}_{\partial\Omega}
+
\alpha\,\mathrm{MSE}_{\mathrm{PINNS}},
\end{equation}
(or $\mathrm{MSE}_{\partial\Omega,\mathrm{mixed}}$ in the mixed-boundary case),
where $\alpha>0$ balances boundary enforcement and PDE residual minimization.
The total loss~\eqref{eq:MSE_total} is minimized using the Adam optimizer~\cite{KingmaBa2015} with an initial learning rate of $10^{-3}$. 

For large $p$, the $\left|\nabla u\right|^{p-2}$ term in the PDE loss will be prone to blow up. When $p$ tends to infinity, this term will blow up to infinity if $| \nabla u |$ is bigger than $1$.
To make the training more stable, $\left|\nabla u\right|$ is clamped to at most~$1$ in the PDE loss for all values of~$p$.
In the case of the distance-to-boundary problem, where the limiting equation is the Eikonal equation $|\nabla u| = 1$, this strategy is particularly natural, as the true solution has unit gradient norm almost everywhere.

Another key ingredient for stable training of the $p$-Laplace operator is an adaptive schedule for the loss-balancing weight~$\alpha$.
In the simple (single-$p$) training and the first band of the iterative training, 
we employ the same adaptive plateau scheduler used for the infinity Laplacian experiments (Section~\ref{subsec:2Dinflap-dbc}): 
$\alpha$ is initialized to~$10^{-5}$ and multiplied by~$10$ whenever the boundary loss plateaus, up to a maximum of~$10^{-1}$.
This staged strategy ensures that the boundary condition is satisfied first,
after which the PDE residual is progressively enforced with increasing weight.

\subsection{PINNs convergence under structural assumptions}
\label{sec:pinns_conv}

We now present conditional convergence results for sequences of PINN approximations for the Dirichlet $p$-Laplace and (homogeneous) infinity Laplace problems.
These results provide analytical support for the continuum residual formulation,
but should not be interpreted as convergence guarantees for the discrete optimization algorithm used in the numerical experiments.
In particular, the theorems presented here are based on assumptions inspired by the universal approximation and 
PINN convergence frameworks of~\cite{Shin2020PINNConvergence,mishra2022estimates}, adapted here to the nonlinear setting.

The assumptions are natural from the perspective of PDE analysis. The use of (at least) $C^2$ activation functions is consistent with the PINN architecture considered above, since smooth activations such as $\tanh$ yield sufficiently regular network outputs for evaluation of the differential operator. Additionally, the requirement that the PDE residual and boundary mismatch vanish reflects the intended objective of PINN training. That is, the network should approximately satisfy the governing equation in the interior $\Omega$ and the boundary condition on $\partial\Omega$. These assumptions may therefore be viewed as idealized expressions of successful training.

However, these hypotheses are not enforced directly in the loss function in the exact form appearing in the theorem. In practice, the PINN loss is minimized only over finitely many collocation and boundary points, and the training process does not explicitly guarantee convergence of the residual in $W^{-1,p'}(\Omega)$ or convergence of the trace in $W^{1-1/p,p}(\partial\Omega)$. Thus, the theorem is more appropriately understood as an analytical statement concerning sequences of PINN approximations whose residual and boundary errors converge to zero asymptotically in suitable norms, rather than as a direct implication of the specific numerical optimization procedure employed. Nonetheless, these assumptions are closely aligned with the structure of the method and help explain why the PINN formulation is a reasonable approximation framework for $p$-Laplace problems. We now present the convergence result for the Dirichlet $p$-Laplace problem.

\begin{theorem}[Convergence of PINNs for $p$-Laplace Problems]
\label{thm:pinn_p_laplacian}
Let $\Omega \subset \mathbb{R}^d$ be a bounded domain with Lipschitz boundary. Consider the boundary value problem
\begin{subequations}
\begin{alignat}{2}
-\Delta_p u &= f \quad \text{in } \Omega, \\
u &= g \quad \text{on } \partial\Omega,
\end{alignat}
\label{eqn:pinns_p_bvp}
\end{subequations}
where $2 \le p < \infty$. Assume the source term satisfies $f \in L^{p'}(\Omega)$, with $p' = p/(p-1)$, and the boundary data satisfies $g \in W^{1-1/p,p}(\partial\Omega)$. Let $u^* \in W^{1,p}(\Omega)$ denote the unique weak solution of~\eqref{eqn:pinns_p_bvp} and 
let $\{u_n\}_{n=1}^{\infty}$ be a sequence of PINN approximations. Assume the activation functions are of class $C^2$, imposed only so that $-\Delta_p u_n$ is defined pointwise. Additionally, assume that 
\[
R_n := -\Delta_p u_n - f
\]
satisfies
\[
\|R_n\|_{W^{-1,p'}(\Omega)} \to 0
\quad \text{as } n \to \infty,
\]
and that
\[
\mathcal{T}(u_n)-g \to 0
\quad \text{in } W^{1-1/p,p}(\partial\Omega),
\]
where $
\mathcal{T} : W^{1,p}(\Omega) \to W^{1-1/p,p}(\partial\Omega)
$ denotes the trace operator.
Under the assumptions provided above, the following hold:
\begin{enumerate}
\item The sequence $\{u_n\}$ converges weakly to $u^*$ in $W^{1,p}(\Omega)$.
\item If, in addition, the boundary condition is enforced exactly, i.e.,
\[
\mathcal{T}(u_n)=g \quad \text{on } \partial\Omega \text{ for all } n,
\]
then the convergence is strong in $W^{1,p}(\Omega)$.
\end{enumerate}
\end{theorem}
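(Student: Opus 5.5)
Our plan is to use the monotonicity structure of the $p$-Laplacian. Write $A(u) := -\Delta_p u$, viewed as an operator $W^{1,p}(\Omega)\to W^{-1,p'}(\Omega)$ through the weak form $\langle A(u),\varphi\rangle = \int_\Omega |\nabla u|^{p-2}\nabla u\cdot\nabla\varphi\,dx$. Since $u_n$ is $C^2$ up to the boundary, integration by parts shows that this weak form agrees with the pointwise operator on test functions in $W^{1,p}_0(\Omega)$. Hence $\langle R_n,\varphi\rangle = \langle A(u_n)-A(u^*),\varphi\rangle$ for all $\varphi\in W^{1,p}_0(\Omega)$, because $u^*$ satisfies $\langle A(u^*),\varphi\rangle=\int f\varphi$.

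\textbf{Step 1: boundedness.} The trace operator $\mathcal T$ has a bounded right inverse $E: W^{1-1/p,p}(\partial\Omega)\to W^{1,p}(\Omega)$ on Lipschitz domains. Set $w_n := u_n - E(\mathcal T u_n - g) - u^*$. Then $w_n \in W^{1,p}_0(\Omega)$, and $\|E(\mathcal T u_n - g)\|_{W^{1,p}} \to 0$. Testing with $\varphi = w_n$ and using the standard inequality for $p\ge2$,
\[
\bigl(|a|^{p-2}a-|b|^{p-2}b\bigr)\cdot(a-b)\ \ge\ c_p|a-b|^p,
\]
we get
\[
c_p\|\nabla(u_n-u^*)\|_{L^p}^p \le \langle A(u_n)-A(u^*),u_n-u^*\rangle = \langle R_n,w_n\rangle + \langle A(u_n)-A(u^*),E(\mathcal T u_n-g)\rangle .
\]
The first term is bounded by $\|R_n\|_{W^{-1,p'}}\|w_n\|_{W^{1,p}}$. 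The second is bounded by $\bigl(\|\nabla u_n\|_{L^p}^{p-1}+\|\nabla u^*\|_{L^p}^{p-1}\bigr)\,\|E(\mathcal T u_n-g)\|_{W^{1,p}}$. Both right-hand terms grow in $\|u_n\|$ with power at most $p-1<p$, and Poincaré's inequality for $w_n$ controls the full norm. A Young-inequality absorption therefore gives $\sup_n\|u_n\|_{W^{1,p}}<\infty$.

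\textbf{Step 2: convergence.} With boundedness established, both terms on the right tend to zero: the first is $o(1)\cdot O(1)$ and the second is $O(1)\cdot o(1)$. Therefore $\|\nabla(u_n-u^*)\|_{L^p}\to0$. Since $u_n-u^*$ equals $w_n$ plus a vanishing correction, Poincaré's inequality upgrades this to $u_n\to u^*$ strongly in $W^{1,p}(\Omega)$. Strong convergence implies the weak convergence claimed in part~1, and part~2 is the special case where the correction term vanishes identically. If we want to mirror the statement's structure, part~1 can instead be obtained by a softer route. Take a weakly convergent subsequence $u_n\rightharpoonup \bar u$ by reflexivity. By continuity of the trace, $\mathcal T\bar u=g$. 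Minty's trick (monotone, hemicontinuous $A$) identifies $A(\bar u)=f$, so uniqueness gives $\bar u=u^*$ and the whole sequence converges weakly. For part~2 one then uses $w_n=u_n-u^*\in W^{1,p}_0$ directly in the coercivity inequality.

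The main obstacle is Step 1, the a priori bound. This is where the nonexact boundary data interacts with the $(p-1)$-growth of $A$, and the absorption must be done carefully. A secondary technical point is justifying that the pointwise residual of the $C^2$ network coincides with the distributional residual paired against $W^{1,p}_0$ test functions. This requires $u_n\in C^2(\overline\Omega)$ and density of $C_c^\infty$ in $W^{1,p}_0$.
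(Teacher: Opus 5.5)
Your proof is correct. It reaches a stronger conclusion than the paper, by a different route.

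The paper argues in three stages:
\begin{enumerate}
\item An a priori bound, obtained by testing the residual with $u_n - E_n - \bar g$ for an arbitrary fixed extension $\bar g$ of $g$. The $f$-terms then have to be estimated and absorbed.
\item Weak subsequential convergence via reflexivity. The limit is identified through continuity of the trace, Minty's trick with hemicontinuity of $-\Delta_p$, and uniqueness.
\item Only under exact boundary enforcement, where $u_n-u^*\in W_0^{1,p}(\Omega)$, the uniform monotonicity inequality, which yields $\|u_n-u^*\|_{W^{1,p}}^{p-1}\le C\|R_n\|_{W^{-1,p'}}$.
\end{enumerate}

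You instead lift $g$ by $u^*$ itself, so the source term cancels exactly. You then apply the $c_p|a-b|^p$ inequality to $u_n-u^* = w_n + E(\mathcal T u_n - g)$. This gives a single estimate of the form $c_pX_n^p \le C\|R_n\|(X_n+\|E_n\|) + C(X_n^{p-1}+1)\|E_n\|$, with $X_n=\|\nabla(u_n-u^*)\|_{L^p}$, and it delivers both boundedness and $X_n\to 0$.

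What your route buys:
\begin{itemize}
\item Strong convergence already in part~1, so the exact-boundary hypothesis in part~2 is superfluous for $p\ge 2$.
\item No need for Minty, reflexivity or the separately cited uniqueness result; the monotonicity estimate itself forces the limit.
\item Once $X_n$ is bounded, a quantitative stability bound of roughly $\|u_n-u^*\|_{W^{1,p}}\lesssim \|R_n\|_{W^{-1,p'}}^{1/(p-1)} + \|\mathcal T u_n-g\|_{W^{1-1/p,p}}^{1/p}$ for large $n$, which the paper only obtains in the exactly-constrained case.
\end{itemize}

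The paper's Minty route, which your ``softer'' alternative reproduces almost verbatim, uses only monotonicity and hemicontinuity of the flux. It therefore carries over to settings where the $c_p|a-b|^p$ lower bound is unavailable; that bound is specific to $p\ge 2$. In exchange, it yields only weak convergence when the boundary data are penalized rather than imposed.
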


\begin{proof} 
We proceed in several steps. First, we establish uniform boundedness of the sequence $\{u_n\}$ in $W^{1,p}(\Omega)$ and extract a weakly convergent subsequence using reflexivity. Next, we show that the weak limit satisfies the boundary condition and identify it as the unique weak solution via a monotonicity argument. Finally, we prove strong convergence under exact boundary enforcement.
The argument relies on the weak formulation of the $p$-Laplace equation, properties of reflexive Banach spaces, and the theory of monotone operators. We denote by $\langle \cdot,\cdot\rangle$ the duality pairing between $W^{-1,p'}(\Omega)$ and $W^{1,p}_0(\Omega)$.

Since $u^* \in W^{1,p}(\Omega)$ denotes the unique weak solution of~\eqref{eqn:pinns_p_bvp}, it satisfies the boundary condition
\[
\mathcal{T}(u^*) = g \quad \text{on } \partial\Omega,
\]
and the weak formulation
\begin{equation}
\int_\Omega |\nabla u^*|^{p-2}\nabla u^* \cdot \nabla v \, dx
=
\int_\Omega f v \, dx,
\qquad
\forall v \in W_0^{1,p}(\Omega).
\label{eq:weak_form}
\end{equation}

\bigskip

\noindent \emph{1. Weak convergence.}
\smallskip

\noindent We first consider the general setting where boundary conditions are enforced through a penalty term in the loss function. Our initial objective is to establish uniform boundedness of the sequence $\{u_n\}$ in $W^{1,p}(\Omega)$.

To handle the approximate boundary condition, we introduce a correction term that removes the boundary mismatch.
To correct the boundary mismatch, we use a bounded right inverse of the trace operator. Let
\[
\mathcal{E}: W^{1-1/p,p}(\partial\Omega) \to W^{1,p}(\Omega)
\]
be a bounded right inverse of the trace operator satisfying $\mathcal{T}(\mathcal{E}\psi)=\psi$ on $\partial\Omega$ (see \cite[Sec.~2.5.7, Thm.~5.7]{Necas2012}). Define
\[
E_n := \mathcal{E}(\mathcal{T}(u_n)-g).
\]
Then
\[
\mathcal{T}(E_n)=\mathcal{T}(u_n)-g
\]
and, by continuity of $\mathcal{E}$, there exists $C>0$ such that
\[
\|E_n\|_{W^{1,p}(\Omega)}
\le
C\|\mathcal{T}(u_n)-g\|_{W^{1-1/p,p}(\partial\Omega)}
\to 0 .
\]

Let $\bar g \in W^{1,p}(\Omega)$ be a fixed extension of $g$, and define
\[
v_n = u_n - E_n - \bar g .
\]
By construction, $\mathcal{T}(v_n)=0$, hence $v_n \in W_0^{1,p}(\Omega)$ and may be used as a test function for the residual.

Using the definition of the residual $R_n$, we obtain
\begin{align*}
\langle R_n, v_n \rangle
&=
\int_\Omega |\nabla u_n|^{p-2}\nabla u_n \cdot \nabla v_n \, dx
-
\int_\Omega f v_n \, dx \\
&=
\int_\Omega |\nabla u_n|^p \, dx
-
\int_\Omega |\nabla u_n|^{p-2}\nabla u_n \cdot \nabla(E_n+\bar g)\,dx
-
\int_\Omega f v_n \, dx.
\end{align*}
Rearranging then yields
\begin{align}
\|\nabla u_n\|_{L^p(\Omega)}^p
&=
\langle R_n,v_n\rangle
+
\int_\Omega |\nabla u_n|^{p-2}\nabla u_n \cdot \nabla(E_n+\bar g)\,dx
+
\int_\Omega f v_n\,dx. 
\label{eq:coercivity_expansion}
\end{align}
We now estimate each term in~\eqref{eq:coercivity_expansion} to obtain a uniform $L^p$ bound on the gradients.
Using Poincaré's inequality for $v_n \in W_0^{1,p}(\Omega)$, there exists constant $C_1 > 0$ such that 
\[
\|v_n\|_{W^{1,p}(\Omega)}
\le
C_1\|\nabla v_n\|_{L^p(\Omega)}
\le
C_1\bigl(\|\nabla u_n\|_{L^p(\Omega)}+\|\nabla E_n\|_{L^p(\Omega)}+\|\nabla\bar g\|_{L^p(\Omega)}\bigr),
\]
and so first term in~\eqref{eq:coercivity_expansion} satisfies
\begin{align*}
|\langle R_n,v_n\rangle|
&\le
\|R_n\|_{W^{-1,p'}(\Omega)}\|v_n\|_{W^{1,p}(\Omega)}\\
& \le C_1 \|R_n\|_{W^{-1,p'}(\Omega)}\bigl(\|\nabla u_n\|_{L^p(\Omega)}+\|\nabla E_n\|_{L^p(\Omega)}+\|\nabla\bar g\|_{L^p(\Omega)}\bigr),
\end{align*}
where for the first inequality we have used the definition of the dual norm.
Similarly, for the third term in~\eqref{eq:coercivity_expansion}, Hölder's inequality and Poincaré's inequality give
\begin{align*}
\left|\int_\Omega f v_n\,dx\right|
&\le
\|f\|_{L^{p'}(\Omega)}\|v_n\|_{L^p(\Omega)} \\
&\le
C_2\|f\|_{L^{p'}(\Omega)}\|\nabla v_n\|_{L^p(\Omega)}
 \\
&\le
C_2\|f\|_{L^{p'}(\Omega)}\bigl(\|\nabla u_n\|_{L^p(\Omega)}+\|\nabla E_n\|_{L^p(\Omega)}+\|\nabla\bar g\|_{L^p(\Omega)}\bigr),
\end{align*}
for some constant $C_2 >0$. 
For the middle term in~\eqref{eq:coercivity_expansion}, Hölder's inequality gives
\begin{align*}
\int_\Omega |\nabla u_n|^{p-2}\nabla u_n \cdot \nabla(E_n+\bar g)\,dx
&\le
\int_\Omega |\nabla u_n|^{p-1} |\nabla(E_n+\bar g)|\,dx \\
&\le
\||\nabla u_n|^{p-1}\|_{L^{p'}(\Omega)}
\|\nabla(E_n+\bar g)\|_{L^p(\Omega)} \\
&=
\|\nabla u_n\|_{L^p(\Omega)}^{p-1}\|\nabla(E_n+\bar g)\|_{L^p(\Omega)}.
\end{align*}
Substituting these bounds back into~\eqref{eq:coercivity_expansion} gives 
\begin{equation}
\begin{split}
\|\nabla u_n\|_{L^p(\Omega)}^p
&=
\langle R_n,v_n\rangle
+
\int_\Omega |\nabla u_n|^{p-2}\nabla u_n \cdot \nabla(E_n+\bar g)\,dx
+
\int_\Omega f v_n\,dx \\
&\le C \bigl( \|R_n\|_{W^{-1,p'}(\Omega)} +\|f\|_{L^{p'}(\Omega)} \bigr) \bigl(\|\nabla u_n\|_{L^p(\Omega)}+\|\nabla E_n\|_{L^p(\Omega)}+\|\nabla\bar g\|_{L^p(\Omega)}\bigr) 
\\& \qquad\qquad
+ \|\nabla u_n\|_{L^p(\Omega)}^{p-1}\|\nabla(E_n+\bar g)\|_{L^p(\Omega)},
\end{split}
\label{eqn:gradu-to-apply-Young}
\end{equation}
where $C = \max\{C_1, C_2\}$. 

Let $\epsilon>0$. By $\epsilon$-Young's inequality, there exist constants
$C_\epsilon, C'_\epsilon>0$ such that
\[
ab \le \epsilon a^p + C_\epsilon b^{p'},
\qquad
ab \le \epsilon a^{p'} + C'_\epsilon b^p,
\]
for all $a,b\ge0$. Applying these inequalities to the terms involving
$\|\nabla u_n\|_{L^p(\Omega)}$ on the right-hand side of
\eqref{eqn:gradu-to-apply-Young}, we obtain
\begin{equation}
\begin{split}
\|\nabla u_n\|_{L^p(\Omega)}^p
&\le C \bigl( \|R_n\|_{W^{-1,p'}(\Omega)} +\|f\|_{L^{p'}(\Omega)} \bigr)
\bigl(\|\nabla E_n\|_{L^p(\Omega)}+\|\nabla\bar g\|_{L^p(\Omega)}\bigr) \\
&\qquad
+ \epsilon\|\nabla u_n\|_{L^p(\Omega)}^p
+ C_\epsilon C^{p'}
\bigl( \|R_n\|_{W^{-1,p'}(\Omega)} +\|f\|_{L^{p'}(\Omega)} \bigr)^{p'} \\
&\qquad
+ \epsilon\|\nabla u_n\|_{L^p(\Omega)}^p
+ C'_\epsilon\|\nabla(E_n+\bar g)\|_{L^p(\Omega)}^p .
\end{split}
\label{eqn:applied-Young}
\end{equation}
Choosing $\epsilon<1/2$ and absorbing the terms
$\epsilon\|\nabla u_n\|_{L^p(\Omega)}^p$ into the left-hand side yields
\begin{equation*}
    \begin{split}
        (1-2\epsilon)\|\nabla u_n\|_{L^p(\Omega)}^p
&\le 
C \bigl( \|R_n\|_{W^{-1,p'}(\Omega)} +\|f\|_{L^{p'}(\Omega)} \bigr)
\bigl(\|\nabla E_n\|_{L^p(\Omega)}+\|\nabla\bar g\|_{L^p(\Omega)}\bigr) \\
&\qquad
+ C_\epsilon C^{p'}
\bigl( \|R_n\|_{W^{-1,p'}(\Omega)} +\|f\|_{L^{p'}(\Omega)} \bigr)^{p'}
+ C'_\epsilon\|\nabla(E_n+\bar g)\|_{L^p(\Omega)}^p .
    \end{split}
\end{equation*}
Since $\|R_n\|_{W^{-1,p'}(\Omega)} \to 0$,
$\|f\|_{L^{p'}(\Omega)}$ is fixed, $\|E_n\|_{W^{1,p}(\Omega)} \to 0$ implies $\|\nabla E_n\|_{L^p(\Omega)}$ is bounded, and $\bar g$ is fixed in $W^{1,p}(\Omega)$, the right-hand side is bounded independently of $n$. Therefore, there exists a constant $K>0$ such that
\[
(1-2\epsilon)\|\nabla u_n\|_{L^p(\Omega)}^p \le K
\qquad \text{for all } n.
\]
Because $1-2\epsilon>0$, it follows that
\[
\|\nabla u_n\|_{L^p(\Omega)}^p \le \frac{K}{1-2\epsilon},
\]
and so $\|\nabla u_n\|_{L^p(\Omega)}$ is uniformly bounded.

We now extend this to a full $W^{1,p}(\Omega)$ bound on $\{u_n\}$.
Recall that
$
v_n = u_n - E_n - \bar g,
$
with $v_n \in W_0^{1,p}(\Omega)$. By Poincaré's inequality, for some constant $\tilde C>0$ we have
\[
\|v_n\|_{L^p(\Omega)} \le \tilde C \|\nabla v_n\|_{L^p(\Omega)}.
\]
Moreover,
\[
\nabla v_n = \nabla u_n - \nabla E_n - \nabla \bar g,
\]
so
\[
\|\nabla v_n\|_{L^p(\Omega)}
\le
\|\nabla u_n\|_{L^p(\Omega)}
+
\|\nabla E_n\|_{L^p(\Omega)}
+
\|\nabla \bar g\|_{L^p(\Omega)}.
\]
Since $\|\nabla u_n\|_{L^p(\Omega)}$ is uniformly bounded, $\|E_n\|_{W^{1,p}(\Omega)}\to0$, and $\bar g$ is fixed, it follows that $\|\nabla v_n\|_{L^p(\Omega)}$ is uniformly bounded. Therefore, by Poincaré's inequality, $\|v_n\|_{L^p(\Omega)}$ is also uniformly bounded, and therefore $\{v_n\}$ is uniformly bounded in $W^{1,p}(\Omega)$.
Thus, from
\[
u_n = v_n + E_n + \bar g,
\]
and the triangle inequality, we conclude that $\{u_n\}$ is uniformly bounded in $W^{1,p}(\Omega)$. 

Having established uniform boundedness, the bounded sequence $\{u_n\}$ in $W^{1,p}(\Omega)$ admits a subsequence, which we denote by $\{u_{n_k}\}$, and a function $\tilde u \in W^{1,p}(\Omega)$ such that $u_{n_k}$ converges weakly to $\tilde u$ in $W^{1,p}(\Omega)$, since $W^{1,p}(\Omega)$ is reflexive for $1<p<\infty$.

We now show that the weak limit $\tilde u$ satisfies the boundary condition.
The trace operator is continuous, and therefore $\mathcal{T}(u_{n_k})$ converges weakly to $\mathcal{T}(\tilde u)$ in $W^{1-1/p,p}(\partial\Omega)$. Also, by assumption, the boundary loss implies that
\[
\mathcal{T}(u_{n_k}) \to g
\quad \text{strongly in } W^{1-1/p,p}(\partial\Omega).
\]
Since weak and strong limits must coincide, it follows that
\[
\mathcal{T}(\tilde u) = g.
\]
In particular, $\tilde u$ satisfies the boundary condition in the trace sense and we have  $\tilde{u} - \bar g \in W_0^{1,p}(\Omega)$. 

It remains to verify that the weak limit $\tilde u$ solves the $p$-Laplace problem. To achieve this we will use a Minty argument, which utilizes the framework of monotone operators~\cite{showalter1997monotone, brezis2011functional}.
Let $v \in W^{1,p}(\Omega)$ be fixed and satisfy $v-\bar g \in W_0^{1,p}(\Omega)$.  By monotonicity of the map $\xi \mapsto |\xi|^{p-2}\xi$, we have
\begin{equation}
\int_\Omega \left( |\nabla u_{n_k}|^{p-2}\nabla u_{n_k} - |\nabla v|^{p-2}\nabla v \right)\cdot \nabla(u_{n_k}-v)\,dx \ge 0.
\label{eq:minty_mono}
\end{equation}

Write the decomposition
\[
u_{n_k}-v = (u_{n_k}-v-E_{n_k}) + E_{n_k},
\]
and define
\[
w_{n_k} := u_{n_k}-v-E_{n_k}.
\]
Since
$
\mathcal{T}(E_{n_k})=\mathcal{T}(u_{n_k})-g
$
and $\mathcal{T}(v)=g$, we obtain
\[
\mathcal T(w_{n_k})
=
\mathcal T(u_{n_k})-\mathcal T(v)-\mathcal T(E_{n_k})
=
\mathcal T(u_{n_k})-g-(\mathcal T(u_{n_k})-g)
=
0.
\]
Thus, we get $w_{n_k}\in W_0^{1,p}(\Omega)$ which permits its use as a test function. Then
\begin{align*}
\int_\Omega |\nabla u_{n_k}|^{p-2}\nabla u_{n_k}\cdot \nabla(u_{n_k}-v)\,dx
&=
\int_\Omega |\nabla u_{n_k}|^{p-2}\nabla u_{n_k}\cdot \nabla(w_{n_k}+E_{n_k})\,dx \\
&=
\langle -\Delta_p u_{n_k}, w_{n_k}\rangle
+
\int_\Omega |\nabla u_{n_k}|^{p-2}\nabla u_{n_k}\cdot \nabla E_{n_k}\,dx \\
&=
\langle f+R_{n_k}, w_{n_k}\rangle
+
\int_\Omega |\nabla u_{n_k}|^{p-2}\nabla u_{n_k}\cdot \nabla E_{n_k}\,dx .\\
&=
\langle f+R_{n_k}, u_{n_k}-v-E_{n_k}\rangle
+
\int_\Omega |\nabla u_{n_k}|^{p-2}\nabla u_{n_k}\cdot \nabla E_{n_k}\,dx .
\end{align*}
Substituting this into~\eqref{eq:minty_mono}, we get
\begin{align}
\langle f+R_{n_k}, u_{n_k}-v-E_{n_k}\rangle
+
\int_\Omega |\nabla u_{n_k}|^{p-2}\nabla u_{n_k}\cdot \nabla E_{n_k}\,dx
\ge
\int_\Omega |\nabla v|^{p-2}\nabla v \cdot \nabla(u_{n_k}-v)\,dx .
\label{eq:minty_pre_limit}
\end{align}
Since $\|\nabla u_{n_k}\|_{L^p}$ is uniformly bounded and $\|E_{n_k}\|_{W^{1,p}(\Omega)}\to0$, for the second term on the left-hand side of~\eqref{eq:minty_pre_limit} it follows from Hölder's inequality that  
\[
\left|\int_\Omega |\nabla u_{n_k}|^{p-2}\nabla u_{n_k}\cdot \nabla E_{n_k}\,dx\right|
\le
\|\nabla u_{n_k}\|_{L^p(\Omega)}^{p-1}\|\nabla E_{n_k}\|_{L^p(\Omega)}
\to 0.
\]
Next, we split
\[
\langle f+R_{n_k}, u_{n_k}-v-E_{n_k}\rangle
=
\langle f, u_{n_k}-v-E_{n_k}\rangle
+
\langle R_{n_k}, u_{n_k}-v-E_{n_k}\rangle.
\]
Since $\{u_{n_k}\}$ is bounded in $W^{1,p}(\Omega)$, $v$ is fixed, and $E_{n_k}\to0$ in $W^{1,p}(\Omega)$, it follows that $\{u_{n_k}-v-E_{n_k}\}$ is bounded in $W^{1,p}(\Omega)$. Further, since  $R_{n_k}\to0$ in $W^{-1,p'}(\Omega)$, we have
\[
\langle R_{n_k}, u_{n_k}-v-E_{n_k}\rangle \to 0.
\]
Moreover, $u_{n_k}$ converges weakly to $\tilde u$ in $W^{1,p}(\Omega)$, hence also weakly in $L^p(\Omega)$ by continuity of the embedding $W^{1,p}(\Omega) \hookrightarrow L^p(\Omega)$, while $E_{n_k}\to0$ in $L^p(\Omega)$. Therefore
\[
\langle f, u_{n_k}-v-E_{n_k}\rangle \to \langle f,\tilde u-v\rangle.
\]
Finally, since $v$ is fixed and
$|\nabla v|^{p-2}\nabla v \in L^{p'}(\Omega)$, weak convergence of
$\nabla u_{n_k}$ in $L^p(\Omega)$ yields
\[
\int_\Omega |\nabla v|^{p-2}\nabla v \cdot \nabla(u_{n_k}-v)\,dx
\to
\int_\Omega |\nabla v|^{p-2}\nabla v \cdot \nabla(\tilde u-v)\,dx.
\]
Passing to the limit in~\eqref{eq:minty_pre_limit} therefore gives
\begin{equation}
    \langle f,\tilde u-v\rangle
\ge
\int_\Omega |\nabla v|^{p-2}\nabla v \cdot \nabla(\tilde u-v)\,dx
=
\langle -\Delta_p v,\tilde u-v\rangle.
\label{eqn:minty_post_limit}
\end{equation}

Now pick $v=\tilde u-t\phi$, where $t>0$ and $\phi\in W_0^{1,p}(\Omega)$. Then
\[
\tilde u-t\phi-\bar g = (\tilde u-\bar g)-t\phi \in W_0^{1,p}(\Omega),
\]
so substituting into~\eqref{eqn:minty_post_limit} gives
\[
t\langle -\Delta_p(\tilde u-t\phi),\phi\rangle
\le
t\langle f,\phi\rangle.
\]
Dividing by $t>0$, we obtain
\begin{equation}
    \langle -\Delta_p(\tilde u-t\phi),\phi\rangle
\le
\langle f,\phi\rangle.
\label{eqn:pre-hemi-lim}
\end{equation}
We will now use the hemicontinuity of the operator $-\Delta_p : W^{1,p}(\Omega) \to W^{-1,p'}(\Omega)$. The operator $-\Delta_p$ is monotone and hemicontinuous; see \cite[Ch.~II]{showalter1997monotone}.
In particular, hemicontinuity ensures that
\[
t \mapsto \langle -\Delta_p(\tilde u - t\phi), \phi \rangle
\]
is continuous. Thus, letting $t\to0^+$ in~\eqref{eqn:pre-hemi-lim}, 
it follows that
\[
\langle -\Delta_p\tilde u,\phi\rangle
\le
\langle f,\phi\rangle.
\]

Replacing $\phi$ by $-\phi$ yields the reverse inequality, and therefore
\[
\langle -\Delta_p\tilde u,\phi\rangle
=
\langle f,\phi\rangle
\qquad
\forall \phi \in W_0^{1,p}(\Omega).
\]
Thus, $\tilde u$ is a weak solution. By uniqueness of the Dirichlet $p$-Laplace problem~\cite{LindqvistpLap}, $\tilde u=u^*$.
Since every weakly convergent subsequence has the same limit, the entire sequence $\{u_n\}$ converges weakly to $u^*$ in $W^{1,p}(\Omega)$. This concludes the proof of the first statement.\\

\noindent \emph{2. Strong convergence.}
\smallskip

\noindent For the second statement, it is part of our hypotheses that 
\[
\mathcal{T}(u_n)=g \quad \text{on } \partial\Omega \text{ for all } n.
\]
Also, $\mathcal{T}(u^*)=g$ because $u^*$ is the weak solution of~\eqref{eqn:pinns_p_bvp}, linearity of the trace operator gives
\[
\mathcal{T}(u_n-u^*)=\mathcal{T}(u_n)-\mathcal{T}(u^*)=g-g=0.
\]
Therefore, we have $u_n-u^* \in W_0^{1,p}(\Omega)$ meaning the difference $ u_n-u^*$ is a valid test function.

Using the uniform monotonicity of the map $\xi \mapsto |\xi|^{p-2}\xi$,
\[
\int_\Omega
\bigl(|\nabla u_n|^{p-2}\nabla u_n-|\nabla u^*|^{p-2}\nabla u^*\bigr)
\cdot \nabla(u_n-u^*)\,dx
\ge
c_p\|\nabla(u_n-u^*)\|_{L^p}^p .
\]
Therefore,
\begin{equation}
    \begin{split}
        \|\nabla(u_n-u^*)\|_{L^p(\Omega)}^p
&\le
\frac{1}{c_p}
\int_\Omega
\bigl(|\nabla u_n|^{p-2}\nabla u_n-|\nabla u^*|^{p-2}\nabla u^*\bigr)
\cdot \nabla(u_n-u^*)\,dx \\
&=
\frac{1}{c_p}
\Bigl(
\langle -\Delta_p u_n, u_n-u^*\rangle
-
\langle -\Delta_p u^*, u_n-u^*\rangle
\Bigr) \\
&=
\frac{1}{c_p}\langle R_n, u_n-u^*\rangle,
    \end{split}
    \label{eqn:Lp_grad_simon}
\end{equation}
where we have used that $u_n-u^*\in W_0^{1,p}(\Omega)$.

Poincaré's inequality then gives
\[
\|u_n-u^*\|_{W^{1,p}(\Omega)}^p
\le
(1+C_p^p)\|\nabla(u_n-u^*)\|_{L^p(\Omega)}^p,
\]
for some constant $C_p>0$. 
Substituting this bound into~\eqref{eqn:Lp_grad_simon} and using Hölder's inequality, we arrive at
\[ 
\|u_n-u^*\|_{W^{1,p}(\Omega)}^p
\le
\frac{(1+C_p^p)}{c_p}\|R_n\|_{W^{-1,p'}(\Omega)}\|u_n-u^*\|_{W^{1,p}(\Omega)},
\]
and therefore
\[
\|u_n-u^*\|_{W^{1,p}(\Omega)}^{p-1}
\le
\frac{(1+C_p^p)}{c_p}\|R_n\|_{W^{-1,p'}(\Omega)}.
\]
Since $\|R_n\|_{W^{-1,p'}(\Omega)}\to0$ and $p>1$, we conclude that
\[
\|u_n-u^*\|_{W^{1,p}(\Omega)}\to0.
\]
Thus the convergence is strong when the boundary condition is enforced exactly.\\
\end{proof}

The above result establishes conditional convergence for sequences satisfying the stated continuum residual and trace assumptions.
We now turn to the infinity Laplace problem, where solutions are understood in the viscosity sense as opposed to the weak sense.

\begin{theorem}[Convergence of PINNs for infinity Laplace Problems]
\label{thm:pinn_inf_laplacian}
Let $\Omega \subset \mathbb{R}^d$ be a bounded domain with Lipschitz boundary. Consider the boundary value problem
\begin{subequations}
\begin{alignat}{2}
-\Delta_\infty u &= 0 \quad \text{in } \Omega, \\
u &= g \quad \text{on } \partial\Omega,
\end{alignat}
\label{eqn:pinns_inf_bvp}
\end{subequations}

\noindent where $g \in \mathrm{Lip}(\partial\Omega)$. Let $u^* \in W^{1,\infty}(\Omega) \cap C(\bar{\Omega})$ denote the unique viscosity solution of~\eqref{eqn:pinns_inf_bvp} and
let $\{u_n\}_{n=1}^{\infty}$ be a sequence of PINN approximations. Assume the activation functions are of class $C^2$. Additionally, assume that
\[
\|-\Delta_\infty u_n\|_{L^\infty(\Omega)} \to 0
\quad \text{as } n \to \infty,
\]
and that
\[
\|u_n - g\|_{L^\infty(\partial\Omega)} \to 0.
\]
Further, assume that the sequence $\{u_n\}$ is uniformly Lipschitz in $\Omega$.
Then, the sequence $\{u_n\}$ converges uniformly to $u^*$ in $C(\bar{\Omega})$, that is,
\[
\|u_n - u^*\|_{C(\bar{\Omega})} \to 0.
\]
\end{theorem}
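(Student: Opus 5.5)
The plan is to combine Arzel\`a--Ascoli compactness with the stability of viscosity solutions under uniform convergence, and then use uniqueness of the viscosity solution of~\eqref{eqn:pinns_inf_bvp}, together with a subsequence argument, to identify the limit.

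\textbf{Compactness.} Since the activations are $C^2$, each $u_n$ is $C^2$ on a neighbourhood of $\bar\Omega$. By hypothesis the $u_n$ are uniformly Lipschitz with constant $L$. Fix $x_0\in\partial\Omega$. From $\|u_n-g\|_{L^\infty(\partial\Omega)}\to0$ the values $u_n(x_0)$ are bounded, and $\Omega$ is bounded, so the family $\{u_n\}$ is uniformly bounded and equicontinuous on $\bar\Omega$. By Arzel\`a--Ascoli every subsequence has a further subsequence $u_{n_k}\to\tilde u$ uniformly on $\bar\Omega$, with $\tilde u$ $L$-Lipschitz. Uniform convergence and the boundary hypothesis give $\tilde u=g$ on $\partial\Omega$.

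\textbf{Stability.} We show that $\tilde u$ is a viscosity solution of $-\Delta_\infty u=0$. Let $\varphi\in C^2$ touch $\tilde u$ from above at $x_0\in\Omega$, and make the touching strict by replacing $\varphi$ with $\varphi+|x-x_0|^4$, which changes neither the derivatives up to order two at $x_0$. Uniform convergence then yields points $x_k\to x_0$ at which $u_{n_k}-\varphi$ has a local maximum. Because $u_{n_k}$ is $C^2$, at $x_k$ we have $\nabla u_{n_k}(x_k)=\nabla\varphi(x_k)$ and $D^2u_{n_k}(x_k)\le D^2\varphi(x_k)$. Hence
\[
-\Delta_\infty\varphi(x_k)\le -\Delta_\infty u_{n_k}(x_k)\le \|\Delta_\infty u_{n_k}\|_{L^\infty(\Omega)}\to0.
\]
Continuity of $\nabla\varphi$ and $D^2\varphi$ then gives $-\Delta_\infty\varphi(x_0)\le0$, which is the subsolution property. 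The supersolution property follows symmetrically. The operator is continuous, so in the definition it is irrelevant whether $\nabla\varphi(x_0)$ vanishes. The one point needing care is that the $L^\infty$ bound on the residual must apply at the specific points $x_k$. This holds because $\Delta_\infty u_n$ is continuous, so its essential supremum equals its pointwise supremum.

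\textbf{Identification.} By Jensen's uniqueness theorem~\cite{jensen1993uniqueness} for the Dirichlet problem for the infinity Laplacian with continuous boundary data on a bounded domain, $\tilde u=u^*$. Every subsequence therefore has a further subsequence converging uniformly to the same limit $u^*$, so the whole sequence converges: $\|u_n-u^*\|_{C(\bar\Omega)}\to0$.

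I expect the stability step to be the main obstacle. The delicate part is producing the maximum points $x_k$ of $u_{n_k}-\varphi$ converging to $x_0$, which is where the strict-maximum perturbation is needed, and passing the residual bound to those points. Without the uniform Lipschitz hypothesis, compactness would fail, and the half-relaxed-limits approach would require a comparison principle up to the boundary, which the hypotheses do not directly supply.
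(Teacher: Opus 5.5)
Your proposal is correct and follows essentially the same route as the paper: a uniform bound plus equicontinuity feeding Arzel\`a--Ascoli, then passing the viscosity sub/supersolution inequalities to the limit via maximum points $x_k\to x_0$ and the second-order optimality conditions for the $C^2$ networks, then identifying the limit through Jensen's uniqueness theorem, and finally a subsequence-of-subsequence argument. Your explicit strictification $\varphi+|x-x_0|^4$ and your remark that the essential supremum of the continuous residual controls its value at $x_k$ are small details the paper takes for granted, and they do not change the argument.
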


\begin{proof} 
We proceed in several steps. First, we establish uniform boundedness and equicontinuity of the sequence $\{u_n\}$ and invoke Arzelà-Ascoli to extract a uniformly convergent subsequence. Next, we show that the limit function satisfies the boundary condition and is a viscosity solution of the infinity Laplace equation. Finally, we use uniqueness to conclude convergence of the full sequence.

We first establish a uniform bound on the sequence $\{u_n\}$. By assumption, there exists a constant $C>0$ such that each $u_n$ is Lipschitz continuous on $\Omega$ with Lipschitz constant at most $C$. Therefore, for any $\bx_0 \in \Omega$ and any $\bx \in \partial\Omega$, we have
\begin{equation}
\begin{split}
    |u_n(\bx_0)|
&\le |u_n(\bx_0) - u_n(\bx)| + |u_n(\bx)| \\
&\le C |\bx_0 - \bx| + \|u_n\|_{L^\infty(\partial\Omega)} \\
&\le C \operatorname{diam}(\Omega) + \|u_n\|_{L^\infty(\partial\Omega)}.
\end{split}
\label{eqn:un_ineq}
\end{equation}

By the convergence of the boundary loss,
$
\|u_n - g\|_{L^\infty(\partial\Omega)} \to 0,
$
there exists $N$ such that for all $n \ge N$,
$
\|u_n - g\|_{L^\infty(\partial\Omega)} \le 1.
$
Therefore, for all $n \ge N$,
\begin{align*}
\|u_n\|_{L^\infty(\partial\Omega)}
&\le \|u_n - g\|_{L^\infty(\partial\Omega)} + \|g\|_{L^\infty(\partial\Omega)} \\
&\le 1 + \|g\|_{C(\partial\Omega)}.
\end{align*}
Substituting this back into~\eqref{eqn:un_ineq}, we obtain
\begin{align*}
\|u_n\|_{L^\infty(\Omega)} \leq C \operatorname{diam}(\Omega) + 1+\|g\|_{C(\partial\Omega)}.
\end{align*}
This shows that $\{u_n\}$ is uniformly bounded in $L^\infty(\Omega)$.

Furthermore, the uniform Lipschitz bound implies that $\{u_n\}$ is equicontinuous on $\bar{\Omega}$.
Combined with the uniform $L^\infty(\Omega)$ bound established above, the sequence $\{u_n\}$ is uniformly bounded and equicontinuous on the compact set $\bar{\Omega}$. Therefore, by the Arzelà-Ascoli theorem, there exist a subsequence $\{u_{n_k}\}$ and a function $\tilde{u} \in C(\bar{\Omega})$ such that
\[
u_{n_k} \to \tilde{u}
\quad \text{uniformly on } \bar{\Omega}.
\]

We now show that $\tilde{u}$ is Lipschitz continuous. For any $\bx, \by \in \bar{\Omega}$, each $u_{n_k}$ satisfies
\[
|u_{n_k}(\bx) - u_{n_k}(\by)| \le C\,|\bx - \by|.
\]
Passing to the limit as $k \to \infty$ and using the uniform convergence of $u_{n_k}$ to $\tilde u$ on $\bar\Omega$, we obtain
\[
|\tilde{u}(\bx) - \tilde{u}(\by)|
= \lim_{k \to \infty} |u_{n_k}(\bx) - u_{n_k}(\by)|
\le C\,|\bx - \by|.
\]
Therefore $\tilde{u}$ is Lipschitz continuous on $\bar{\Omega}$. In particular, $\tilde{u} \in W^{1,\infty}(\Omega)$ and
\[
\|\nabla \tilde{u}\|_{L^\infty(\Omega)} \le C.
\]

Next, we verify that the limit function satisfies the boundary condition. For any $\bx \in \partial\Omega$, the triangle inequality gives
\begin{align*}
|\tilde{u}(\bx) - g(\bx)|
&\leq |\tilde{u}(\bx) - u_{n_k}(\bx)| + |u_{n_k}(\bx) - g(\bx)| \\
&\leq \|u_{n_k} - \tilde{u}\|_{C(\bar{\Omega})}
+ \|u_{n_k} - g\|_{L^\infty(\partial\Omega)}.
\end{align*}
As $k \to \infty$, both terms on the right-hand side tend to $0$, which implies
\[
\tilde{u}(\bx) = g(\bx)
\quad \text{for all } \bx \in \partial\Omega.
\]
Therefore, $\tilde{u} \in W^{1,\infty}(\Omega) \cap C(\bar{\Omega})$ and satisfies the boundary condition $\tilde{u} = g$ on $\partial\Omega$.

We will now identify $\tilde{u}$ as a viscosity solution of $-\Delta_\infty \tilde{u} = 0$. We first show that $\tilde{u}$ is a viscosity subsolution. Let $\phi \in C^2(\Omega)$ be a test function such that $\tilde{u} - \phi$ attains a strict local maximum at a point $\bx_0 \in \Omega$. 

Since $u_{n_k} \to \tilde{u}$ uniformly on $\bar{\Omega}$, there exist points $\bx_k \to \bx_0$ such that $u_{n_k} - \phi$ attains a local maximum at $\bx_k$ for each $k$. We know $u_{n_k} - \phi$ attains a local maximum at $\bx_k$, and since the activation functions are of class $C^2$, each $u_{n_k}$ is $C^2$ and the classical optimality conditions apply. Therefore, we have $\nabla u_{n_k}(\bx_k) = \nabla \phi(\bx_k)$ and $D^2 u_{n_k}(\bx_k) \le D^2 \phi(\bx_k)$,
meaning that
$\bv^T D^2 u_{n_k}(\bx_k) \bv \le \bv^T D^2 \phi(\bx_k) \bv$
for all $\bv \in \mathbb{R}^d$.

Using the definition of the infinity Laplacian and the optimality conditions,
\begin{align}
-\Delta_\infty \phi(\bx_k)
&= -\nabla \phi(\bx_k)^T\, D^2 \phi(\bx_k)\, \nabla \phi(\bx_k) \nonumber\\
&\le -\nabla \phi(\bx_k)^T\, D^2 u_{n_k}(\bx_k)\, \nabla \phi(\bx_k) \nonumber\\
&= -\nabla u_{n_k}(\bx_k)^T\, D^2 u_{n_k}(\bx_k)\, \nabla u_{n_k}(\bx_k) \nonumber\\
&= -\Delta_\infty u_{n_k}(\bx_k).
\label{eq:inftydelta}
\end{align}
Since the PDE residual satisfies
\[
\|-\Delta_\infty u_{n_k}\|_{L^\infty(\Omega)} \to 0,
\]
we have, in particular,
\[
|-\Delta_\infty u_{n_k}(\bx_k)|
\le \|-\Delta_\infty u_{n_k}\|_{L^\infty(\Omega)}
\to 0
\quad \text{as } k \to \infty.
\]
Since $\phi \in C^2(\Omega)$ and $\bx_k \to \bx_0$, it follows that
\[
-\Delta_\infty \phi(\bx_k) \to -\Delta_\infty \phi(\bx_0)
\quad \text{as } k \to \infty.
\]
Passing to the limit in~\eqref{eq:inftydelta}, we conclude that
\[
-\Delta_\infty \phi(\bx_0) \le 0,
\]
showing that $\tilde{u}$ is a viscosity subsolution.

Similarly, let $\psi \in C^2(\Omega)$ be a test function such that $\tilde{u} - \psi$ attains a strict local minimum at a point $\by_0 \in \Omega$. Since $u_{n_k} \to \tilde{u}$ uniformly on $\bar{\Omega}$, there exist points $\by_k \to \by_0$ such that $u_{n_k} - \psi$ attains a local minimum at $\by_k$ for each $k$.

At the point $\by_k$, the function $\psi$ touches $u_{n_k}$ from below, so it follows that
\begin{align}
-\Delta_\infty \psi(\by_k)
&= -\nabla \psi(\by_k)^T D^2 \psi(\by_k)\, \nabla \psi(\by_k) \nonumber\\
&\ge -\Delta_\infty u_{n_k}(\by_k).
\label{eq:infty_supersol}
\end{align}
Since $\|-\Delta_\infty u_{n_k}\|_{L^\infty(\Omega)} \to 0$, we have
\[
-\Delta_\infty u_{n_k}(\by_k)
\ge -\|-\Delta_\infty u_{n_k}\|_{L^\infty(\Omega)} \to 0
\quad \text{as } k \to \infty.
\]
Moreover, since $\psi \in C^2(\Omega)$ and $\by_k \to \by_0$, we have
\[
-\Delta_\infty \psi(\by_k) \to -\Delta_\infty \psi(\by_0)
\quad \text{as } k \to \infty.
\]
Passing to the limit yields
\[
-\Delta_\infty \psi(\by_0) \ge 0,
\]
showing that $\tilde{u}$ is a viscosity supersolution. Therefore, $\tilde{u}$ is a viscosity solution. By Jensen's theorem~\cite{jensen1993uniqueness}, the Dirichlet problem for $-\Delta_\infty u = 0$ admits a unique viscosity solution. Hence, we may conclude $\tilde{u} = u^*$.

Finally, we show convergence of the entire sequence. We have shown that every subsequence of $\{u_n\}$ admits a further subsequence that converges uniformly on $\bar{\Omega}$ to the same limit $u^*$. It follows that the whole sequence $\{u_n\}$ converges uniformly to $u^*$ in $C(\bar{\Omega})$, that is,
\[
\|u_n - u^*\|_{C(\bar{\Omega})} \to 0 \quad \text{as } n \to \infty.
\]
This completes the proof.
\end{proof}

These results provide analytical support for the PINN framework at the level of continuum residuals and boundary errors. 
In each case, convergence is conditional on assumptions that reflect the intended outcome of training, 
but which are not directly enforced by the finite-sample, 
clipped, and outlier-filtered loss used in the implementation.

\section{DeepONet}
\label{section:DeepONet}

In addition to PINNs, we investigate Deep Operator Networks (DeepONet)~\cite{lu2021learning} as a surrogate modeling approach for families of $p$-Laplace and
infinity Laplace type problems.
Unlike PINNs, which are trained to approximate the solution of a single PDE instance, DeepONet
aims to learn a parametric mapping from problem inputs to solution fields.
Once trained, a single DeepONet model can be asked to produce approximate solutions for a range
of parameter values without retraining.

In the present work, DeepONet is used to learn the dependence of the solution on the $p$ value
and, when applicable, on geometric parameters describing the computational domain.
This allows us to efficiently explore the $p$ parameter landscape in the $p$-Laplace solutions, including the
large $p$ regime associated with distance function approximation. We emphasize that DeepONet is not used here as a direct PDE solver, but rather as a surrogate model trained on high-fidelity numerical data.
The ability of DeepONet to generalize across parameter values is therefore governed by the
coverage and quality of the training dataset, as well as properties of the operator itself.

Universal approximation results for nonlinear operators provide theoretical justification
for the expressive power of DeepONet~\cite{Chen1995OperatorUniversal, lu2021learning}. In Section~\ref{subsec:deeponet-universal-plap}, we specialize these results to the
parametric $p$-Poisson problem, showing that, under a continuity assumption on the solution
operator, DeepONet can approximate the associated mapping uniformly on compact parameter sets.
However, quantitative error estimates and rigorous convergence guarantees for degenerate elliptic
operators such as the $p$-Laplacian and the infinity Laplacian are generally unavailable.
Accordingly, the performance of the DeepONet models is evaluated empirically in
Section~\ref{section:expts} by comparison with reference solutions and by examining their behavior
for large $p$ values.

\subsection{DeepONet architecture}

DeepONet consists of a trunk network and a branch network.
The trunk network, denoted by $f_t(\bx;\theta_t)$, encodes the spatial dependence of the solution,
while the branch network, denoted by $f_b(\boldsymbol{\mu};\theta_b)$, encodes problem parameters.
Here, $\theta_t$ and $\theta_b$ denote the trainable parameters of the trunk and branch networks,
respectively, and $\boldsymbol{\mu}$ represents the input parameter vector.
In our setting, the trunk network takes spatial coordinates $\bx \in \Omega$ as input and outputs a $K_l$-dimensional vector
\[
{\bf t} = (t_1,\dots,t_{K_l}).
\]
The branch network takes as input $p$ and, when relevant, additional geometric
parameters $\boldsymbol{a}_\Omega$ describing the domain (e.g., ellipse semi-axis lengths), and outputs a $K_l$-dimensional vector
\[
{\bf b} = (b_1,\dots,b_{K_l}).
\] 
The network output is given by
\begin{equation*}
u_\theta(\bx;\boldsymbol{\mu}) = \sum_{k=1}^{K_l} w_k \, b_k t_k,
\end{equation*}
where $K_l$ is the latent dimension and $w_k$ are trainable weights.

The architecture is illustrated in Figure~\ref{fig:Deeponet}.
In our experiments, the trunk network consists of three hidden fully connected layers with $512$ neurons each, followed by an output layer of dimension $K_l=128$.
The branch network consists of three hidden fully connected layers with $128$ neurons each, also followed by an output layer of dimension $K_l=128$. Both networks use $\tanh$ activation functions.

\begin{figure*}[h]
\centering
\includegraphics[width=0.6\textwidth]{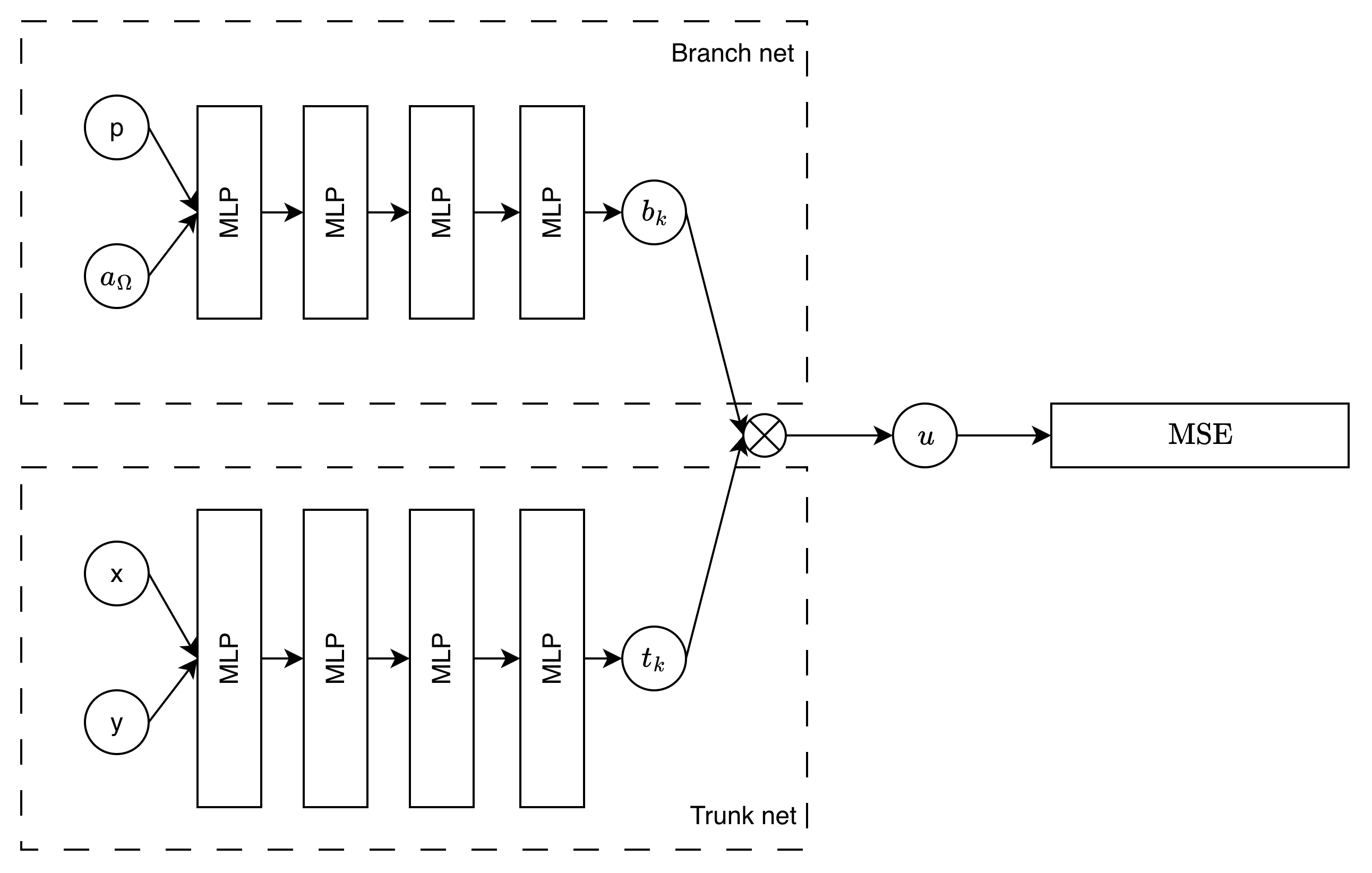}
\caption{\small DeepONet architecture used in this work.}
\label{fig:Deeponet}
\end{figure*}

\subsection{Training data and loss function}

DeepONet is trained using solution data generated by the numerical method described in
Section~\ref{section:data}.
Specifically, for each selected value of $p$, we generate reference solutions using the
Newton-based finite element solver.
For problems involving the infinity Laplacian limit, additional training data is obtained from
PINNs approximations, which offer a practical way of generating solutions where
traditional solvers become prohibitive.

Let $N_t$ denote the number of training samples associated with a fixed parameter instance $\boldsymbol{\mu}$,
and let $\{u^{\mathrm{ref}}_{\boldsymbol{\mu},i}\}_{i=1}^{N_t}$ denote the corresponding reference solution
values evaluated at spatial points $\{\bx_i\}$.
For a fixed parameter instance $\boldsymbol{\mu}$, the DeepONet is trained by minimizing the mean squared error
\begin{equation}
\label{eq:deeponet_loss}
\mathrm{MSE}_{\mathrm{DeepONet}}(\boldsymbol{\mu})
=
\frac{1}{N_t}
\sum_{i=1}^{N_t}
\left(
u_\theta(\bx_i; \boldsymbol{\mu}) - u^{\mathrm{ref}}_{\boldsymbol{\mu},i}
\right)^2.
\end{equation}
In practice, training data is assembled across multiple parameter instances $\boldsymbol{\mu}$, enabling the model to learn the parametric dependence of the solution on both $p$ and the domain geometry.
Once trained, the DeepONet can be called for previously unseen values of $p$. 
The loss~\eqref{eq:deeponet_loss} is minimized using the Adam optimizer~\cite{KingmaBa2015} with an initial learning rate of $10^{-4}$ and cosine-annealed to $10^{-6}$ over $20$ epochs.

\subsection{Universal approximation for $p$-Laplace Problems}
\label{subsec:deeponet-universal-plap}

\begin{prop}[Universal approximation of DeepONet for parametric $p$-Poisson problems]
\label{prop:deeponet_parametric_p_poisson}
Let $\sigma:\mathbb{R}\to\mathbb{R}$ be a continuous non-polynomial activation function, and let
\[
Z:=K\times[2,P]\subset\mathbb{R}^{m+1},
\]
where $K\subset\mathbb{R}^m$ is compact.
For each $\bz=(\boldsymbol{\mu},p)\in Z$, consider the boundary value problem
\begin{alignat*}{2}
-\Delta_p u_{\boldsymbol{\mu},p} &= f_{\boldsymbol{\mu}}, \quad &&\text{in } \Omega_{\boldsymbol{\mu}}, \\
u_{\boldsymbol{\mu},p} &= g_{\boldsymbol{\mu}}, \quad &&\text{on } \partial\Omega_{\boldsymbol{\mu}},
\end{alignat*}
where $\Omega_{\boldsymbol{\mu}}\subset\mathbb{R}^d$ is a bounded domain with $\partial\Omega_{\boldsymbol{\mu}}\in C^{1,\beta}$ for some $\beta>0$ and $2\le p\le P<\infty$.

Assume that for each $(\boldsymbol{\mu},p)\in Z$, the problem admits a unique weak solution
$
u_{\boldsymbol{\mu},p}\in W^{1,p}(\Omega_{\boldsymbol{\mu}})\cap C(\overline{\Omega_{\boldsymbol{\mu}}}).
$
Assume further that there exists a fixed bounded reference domain $\widehat{\Omega}\subset\mathbb{R}^d$ such that, for each $\boldsymbol{\mu}\in K$, there is a bijective $C^1$ map
\[
T_{\boldsymbol{\mu}}:\overline{\widehat{\Omega}}\to\overline{\Omega_{\boldsymbol{\mu}}}.
\]
Define the pullback
\[
\hat u_{\boldsymbol{\mu},p}(\bx):=u_{\boldsymbol{\mu},p}(T_{\boldsymbol{\mu}}(\bx)),
\qquad \bx\in\overline{\widehat{\Omega}},
\]
and the solution operator
\[
\mathcal{G}:Z\to C(\overline{\widehat{\Omega}}),
\qquad
\mathcal{G}(\bz)=\hat u_{\boldsymbol{\mu},p}.
\]
Assume that $\mathcal{G}$ is continuous.
Then, for every $\epsilon>0$, there exists a DeepONet
\[
\mathcal{G}_\theta: Z\times\overline{\widehat{\Omega}}\to\mathbb{R},
\]
with branch input $\bz=(\boldsymbol{\mu},p)$ and trunk input $\bx\in\overline{\widehat{\Omega}}$, such that
\[
\sup_{\bz\in Z}
\|\mathcal{G}(\bz)-\mathcal{G}_\theta(\bz,\cdot)\|_{C(\overline{\widehat{\Omega}})}<\epsilon.
\]
\end{prop}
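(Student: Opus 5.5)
The plan is to reduce the statement to the universal approximation theorem for operators of Chen and Chen~\cite{Chen1995OperatorUniversal}, in the form used for DeepONet in~\cite{lu2021learning}. Once the domain is pulled back to $\widehat{\Omega}$, the PDE structure plays no further role. All that matters is that $\mathcal{G}:Z\to C(\overline{\widehat{\Omega}})$ is a continuous map from a compact subset of $\mathbb{R}^{m+1}$ into continuous functions on a compact set.

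\emph{Step 1: compactness.} Set $F(\bz,\bx):=\mathcal{G}(\bz)(\bx)$ on the compact set $Z\times\overline{\widehat{\Omega}}$. Continuity of $\mathcal{G}$ into the sup norm, together with continuity of each $\mathcal{G}(\bz)$, makes $F$ jointly continuous. Indeed,
\[
|F(\bz,\bx)-F(\bz_0,\bx_0)|\le \|\mathcal{G}(\bz)-\mathcal{G}(\bz_0)\|_{C(\overline{\widehat{\Omega}})}+|\mathcal{G}(\bz_0)(\bx)-\mathcal{G}(\bz_0)(\bx_0)|.
\]
Each $\mathcal{G}(\bz)=u_{\boldsymbol{\mu},p}\circ T_{\boldsymbol{\mu}}$ is continuous because $u_{\boldsymbol{\mu},p}\in C(\overline{\Omega_{\boldsymbol{\mu}}})$ and $T_{\boldsymbol{\mu}}$ is $C^1$. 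Consequently the image $\mathcal{G}(Z)$ is compact in $C(\overline{\widehat{\Omega}})$, and by Arzel\`a--Ascoli it is uniformly bounded and equicontinuous.

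\emph{Step 2: separated approximation.} I would approximate $F$ uniformly by a finite sum $\sum_{k=1}^{K_l} b_k(\bz)t_k(\bx)$ with continuous $b_k,t_k$. One route is Stone--Weierstrass on $Z\times\overline{\widehat{\Omega}}$, since products of polynomials in $\bz$ and in $\bx$ form a separating unital subalgebra. A second route takes a finite $\epsilon/3$-net of the compact family $\mathcal{G}(Z)$ and uses a continuous partition of unity in $\bz$ subordinate to the corresponding open cover of $Z$. This yields an error of at most $\epsilon/3$ in the sup norm.

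\emph{Step 3: neural approximation of the factors.} Since $\sigma$ is continuous and non-polynomial, the one-hidden-layer universal approximation theorem (Leshno--Pinkus, also used in~\cite{Chen1995OperatorUniversal}) gives networks $\tilde b_k$ on $Z$ and $\tilde t_k$ on $\overline{\widehat{\Omega}}$ uniformly close to $b_k$ and $t_k$. The error
\[
\Bigl|\sum_k b_kt_k-\sum_k \tilde b_k\tilde t_k\Bigr|
\]
is bounded by $\sum_k(|b_k-\tilde b_k||t_k|+|\tilde b_k||t_k-\tilde t_k|)$. Because everything is bounded on compact sets, choosing the tolerances of order $\epsilon/(3K_l M)$, with $M$ a common bound, makes this at most $\epsilon/3$.

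Taking $w_k=1$, the branch $(\tilde b_k)$, and the trunk $(\tilde t_k)$ then defines a DeepONet $\mathcal{G}_\theta$ within $\epsilon$ of $\mathcal{G}$ uniformly over $Z$. The main point requiring care is Step 2, specifically obtaining uniformity jointly in $\bz$ and $\bx$. The continuity assumption on $\mathcal{G}$, not on individual solutions, is what supplies this. The regularity assumptions on $\partial\Omega_{\boldsymbol{\mu}}$ and on $p$ are not used beyond guaranteeing the setting; the substantive content lies in the assumed continuity of $\mathcal{G}$.
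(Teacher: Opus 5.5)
Your proof is correct, but it is more self-contained than the paper's. The paper notes that $Z=K\times[2,P]$ is compact and that $\mathcal{G}$ is continuous by hypothesis. It then sets $K_1:=Z$, $K_2:=\overline{\widehat{\Omega}}$ and invokes the Chen--Chen operator approximation theorem, in the branch--trunk form of Lu et al., as a black box.

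You instead re-derive that theorem for a finite-dimensional input, in three steps:
\begin{enumerate}
\item joint continuity of $F(\bz,\bx)=\mathcal{G}(\bz)(\bx)$;
\item a separated approximation $\sum_k b_k(\bz)t_k(\bx)$, via Stone--Weierstrass or a partition of unity over an $\epsilon/3$-net;
\item Leshno--Pinkus applied separately to the branch factors on $Z$ and the trunk factors on $\overline{\widehat{\Omega}}$.
\end{enumerate}

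The citation buys brevity; your argument buys transparency. It shows exactly where sup-norm continuity of $\mathcal{G}$ enters, uses the non-polynomial hypothesis on $\sigma$ directly, and yields an explicit architecture (one-hidden-layer branch and trunk, $w_k=1$). It also sidesteps a small mismatch in the paper's citation. Chen--Chen is stated for inputs $u$ ranging over a compact set $V\subset C(K_1)$ and read through sensor values $u(x_j)$. Taking $K_1:=Z$ therefore tacitly requires identifying $\bz$ with a function on $m+1$ sensor points, i.e.\ $V:=Z\subset C(\{1,\dots,m+1\})\cong\mathbb{R}^{m+1}$.

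Two cosmetic points, neither of which is a gap. First, compactness of $\mathcal{G}(Z)$ follows from continuity of $\mathcal{G}$ alone, not from your joint-continuity estimate, and the Arzel\`a--Ascoli conclusion is never used later. Second, in Step 3 fix $M\ge\max_k\|t_k\|_\infty$ and $M\ge\max_k\|b_k\|_\infty+1$, with tolerances at most $1$, before choosing the networks. This keeps the bound on $|\tilde b_k|$ from being circular.
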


\begin{proof}
Since $K$ is compact and $[2,P]$ is compact, the parameter space
\[
Z = K \times [2,P] \subset \mathbb{R}^{m+1}
\]
is compact. By assumption, for each $\bz=(\boldsymbol{\mu},p)\in Z$, the pullback $\hat u_{\boldsymbol{\mu},p}$ is a continuous function on $\overline{\widehat{\Omega}}$, so the operator
\[
\mathcal{G}: Z \to C(\overline{\widehat{\Omega}}), 
\qquad
\mathcal{G}(\bz) = \hat u_{\boldsymbol{\mu},p},
\]
is well-defined and continuous. Then $Z$ is a compact subset of the Banach space $\mathbb{R}^{m+1}$. We set
\[
K_1 := Z, \qquad K_2 := \overline{\widehat{\Omega}}.
\]
Then $\mathcal{G}$ is a continuous operator from the compact set $K_1$ into $C(K_2)$. Hence, by the universal approximation theorem for nonlinear operators~\cite{Chen1995OperatorUniversal}, and its branch-trunk realization in DeepONet form as stated in~\cite{lu2021learning}, for any $\epsilon > 0$ there exists a DeepONet $\mathcal{G}_\theta(\bz,\bx)$ such that
\[
\sup_{\bz \in Z} \sup_{\bx \in \overline{\widehat{\Omega}}}
\bigl| \mathcal{G}(\bz)(\bx) - \mathcal{G}_\theta(\bz,\bx) \bigr| < \epsilon.
\]
Taking the supremum over $\bx \in \overline{\widehat{\Omega}}$ yields
\[
\sup_{\bz \in Z}
\| \mathcal{G}(\bz) - \mathcal{G}_\theta(\bz,\cdot) \|_{C(\overline{\widehat{\Omega}})} < \epsilon,
\]
which establishes the desired result.
\end{proof}

\section{Numerical experiments}
\label{section:expts}

In this section, we present numerical experiments for the $p$-Laplacian and related
infinity Laplacian problems using PINNs and DeepONets. The primary objective of these
experiments is to assess the empirical performance of these neural network approaches
in approximating solutions of nonlinear and degenerate elliptic problems, particularly
in regimes where traditional numerical solvers become challenging. In particular, we
investigate robustness under reduced regularity, which is characteristic of both
$p$-Laplace and infinity Laplace problems, where solutions often exhibit
$C^{1,\alpha}$ regularity for finite $p$ with $\alpha \to 0$ as $p \to \infty$, and
degenerate to only Lipschitz continuous solutions in the limit.

To this end, we deliberately probe the large-$p$ regime. We use continuation in $p$
and partial training strategies, allowing us to examine how well these methods capture limiting behavior.
The results presented are representative of the observed behavior across our experiments, 
although quantitative performance may vary depending on initialization, sampling, and hyperparameter choices.

Except for the direct finite-$p$ unit-disc validations reported in Section~\ref{subsec:pinn-finite-p-disc} and Table~\ref{tbl:deeponet-finite-p-disc}, the solution error is measured against the known $p\to\infty$ limiting solution $u_\infty$, rather than against the corresponding finite-$p$ solution $u_p$. These errors therefore measure recovery of the limiting profile, not the accuracy of the neural network as a solver or surrogate for the finite-$p$ problem. For the unit-disc problem, where the exact finite-$p$ solution is available, we additionally report the direct error $\mathrm{MSE}_p$ for both the iterative PINN and DeepONet approximations. The FEM errors reported alongside them are convergence-to-limit baselines computed using the same reference solution $u_\infty$. For the remaining DeepONet experiments, finite-$p$ FEM solutions are still used as training data, but the reported testing error is evaluated against $u_\infty$. Training losses and PDE residuals are reported separately. In all tables,``$-$" denotes failure to converge or compute, typically due to numerical instability or loss blow-up for large values of $p$.

Table~\ref{tbl:meshes} summarizes the mesh statistics used to generate the FEM reference solutions. Mesh construction and refinement are carried out using the \texttt{deal.II}
library’s built-in \texttt{GridGenerator} namespace functions~\cite{dealII96}.

\begin{table}[!h] 
\begin{center}
\scalebox{0.85}{
\begin{tabular}{c|c|ccc|ccc}
\hline
\hline
 \multirow{2}{*}{Example} &  \multirow{2}{*}{Equation of domain}  & \multicolumn{3}{c|}{Cell edge length} &  \multicolumn{3}{c}{Mesh size} \\
 & &\text{mean}&\text{max} &\text{min}&\text{\# cells}&\text{\# vertices}&\text{\# faces}\\
\hline
2D disc & $x^2+y^2\leq 1$ & $2.5132$e-02 & $4.9083$e-02 & $1.8043$e-02 & $6825$ & $5185$ & $13776$ \\
\hline
Ellipse 1 & $x^2+16y^2 \leq 1$ & $1.1645$e-02 & $1.5783$e-02 & $6.4314$e-03 & $7644$ & $6025$ & $15638$ \\
Ellipse 2 & $8.5x^2+8.5y^2-15xy \leq 1$ & $1.1645$e-02 & $1.5783$e-02 & $6.4314$e-03 & $7644$ & $6025$ & $15638$\\
Ellipse 3 & $4x^2+y^2 \leq 1$ & $6.6203$e-03 & $1.8764$e-02 & $1.4471$e-03 & $46704$ & $35785$ &  $93758$\\
\hline
3D ball & $x^2+y^2+z^2 \leq 1$ & $2.0717$e-02 & $4.9105$e-02 & $9.1529$e-03 & $224694$ & $201857$ & $680907$\\
3D cylinder & $y^2+z^2 \leq 1, -1 \leq x \leq 1$ & $3.7588$e-02 & $6.2500$e-02 & $1.8043$e-02 & $374490$ & $337025$ & $1135755$\\
3D torus & \begin{tabular}{@{}c@{}}$(2-\sqrt{x^2+z^2})^2+y^2\leq 1,$\\
$-3 \leq x,z \leq 3, -1 \leq y \leq 1$\end{tabular} & $7.7294$e-02 & $1.9631$e-01 & $3.6612$e-02 & $140430$ & $126048$ & $425382$ \\
\hline
\hline
\end{tabular}
}
\end{center}
\caption{ \small
Summary of mesh statistics for training data used in the FEM scheme. All meshes are composed of quadrilateral or hexahedral elements and were generated with \texttt{deal.II}~\cite{dealII96}. We report the average, maximum, and minimum cell edge lengths, as well as the total number of cells, vertices, and faces for each domain geometry.
}
\label{tbl:meshes}
\end{table}

\subsection{Testing error}
To assess approximation accuracy, we evaluate all learned models on a fixed set of test points. 
For the 2D PINN experiments, these consist of a uniform $100 \times 100$ grid over the computational domain, restricted to  points in $\Omega$ together with boundary points on $\partial \Omega$. 
For the 3D Eikonal PINN experiments used to generate limiting DeepONet data, the test set is constructed from a uniform $50\times50\times50$ grid and restricted to the computational domain.

Let $N_{\mathrm{test}}$ denote the total number of testing points.
For a reference solution $u_{\mathrm{ref}}$, we define its normalized
discrete $L^2$ norm on the test set by
\begin{equation}
\label{eq:normalized_discrete_l2}
\|u_{\mathrm{ref}}\|_{2,N_{\mathrm{test}}}
=
\left(
\frac{1}{N_{\mathrm{test}}}
\sum_{k=1}^{N_{\mathrm{test}}}
|u_{\mathrm{ref}}(\bx_k)|^2
\right)^{1/2}.
\end{equation}

For a learned approximation $u_\theta(\bx)$, the test error with respect to the exact infinity Laplace 
solution $u_\infty$ is defined by
\begin{equation}
\label{eq:MSE_test}
\mathrm{MSE}_\infty
=
\frac{1}{N_{\mathrm{test}}}
\sum_{k=1}^{N_{\mathrm{test}}}
\bigl(u_\theta(\bx_k) - u_\infty(\bx_k)\bigr)^2.
\end{equation}
For finite $p$ examples, we measure the discrepancy with respect to the limiting infinity solution via
\begin{equation}
\label{eq:MSE_test_pinf}
\mathrm{MSE}_{\infty, p}
= 
\frac{1}{N_{\mathrm{test}}}
\sum_{k=1}^{N_{\mathrm{test}}}
\bigl(u_\theta(\bx_k; p) - u_\infty(\bx_k)\bigr)^2,
\end{equation}
which quantifies the discrepancy between the network prediction evaluated at parameter $p$ and the limiting solution $u_\infty$, not the finite-$p$ solution error. When an exact finite-$p$ solution is available, we also report the direct finite-$p$ solution error
\begin{equation} 
\label{eq:MSE_test_finite_p}
\mathrm{MSE}_{p} = \frac{1}{N_{\mathrm{test}}} \sum_{k=1}^{N_{\mathrm{test}}} \bigl(u_\theta(\bx_k;p)-u_p(\bx_k)\bigr)^2.
\end{equation}
Unlike $\mathrm{MSE}_{\infty,p}$, this quantity directly evaluates the learned
approximation against the solution of the corresponding finite-$p$ problem. Here, $u_\theta(\bx; p)$ denotes a learned approximation at parameter value $p$. 
In the PINN setting, a separate network is trained for each $p$, so the dependence 
on $p$ is implicit, whereas in the DeepONet setting the network explicitly takes $p$ 
as an input parameter. 
We additionally consider the residual error associated with the infinity Laplacian,
\begin{equation}
\label{eq:MSE_test_inflapres}
\mathrm{MSE}_{\Delta_\infty}
=
\frac{1}{N_{\mathrm{test}}}
\sum_{k=1}^{N_{\mathrm{test}}}
\bigl(\Delta_\infty u_\theta(\bx_k; p)\bigr)^2,
\end{equation}
which measures how well the learned solution satisfies the homogeneous infinity Laplace equation.

Table~\ref{tab:pinns_mse} summarizes the training and testing mean squared error 
quantities used throughout the PINN experiments. Similarly, 
Table~\ref{tab:deeponet_mse} summarizes the corresponding quantities for the DeepONet experiments.

\begin{table}[h!]
\centering
\begin{tabular}{|lll|}
\hline
\underline{\textbf{Training:}} & &\\

$\mathrm{MSE}_{\partial\Omega}$ 
&~\eqref{eq:MSE_Dirichlet}
& Enforces Dirichlet boundary data \\

$\mathrm{MSE}_{\partial\Omega,\mathrm{mixed}}$&~\eqref{eq:MSE_mixed} & Enforces mixed Dirichlet-Neumann boundary data\\

$\mathrm{MSE}_{\mathrm{PINNs}}$ 
&~\eqref{eq:MSE_PINNS}
& Enforces governing equation \\

$\mathrm{MSE}$ 
&~\eqref{eq:MSE_total}
& Total training objective \\

\hline 
\underline{\textbf{Testing:}} & & \\

$\mathrm{MSE}_{\infty}$ 
&~\eqref{eq:MSE_test}
& Infinity Laplacian: error vs infinity solution \\

$\mathrm{MSE}_{\infty, p}$ 
&~\eqref{eq:MSE_test_pinf} 
&  $p$-Laplacian: error vs limiting infinity solution \\

$\mathrm{MSE}_{p}$
&~\eqref{eq:MSE_test_finite_p}
& $p$-Laplacian: error vs exact finite-$p$ solution \\

$\mathrm{MSE}_{\Delta_\infty}$ 
&~\eqref{eq:MSE_test_inflapres} 
& Residual for infinity Laplacian \\
\hline
\end{tabular}
\caption{ \small Mean squared error quantities used for PINN training and evaluation.}
\label{tab:pinns_mse}
\end{table}

\begin{table}[h!]
\centering
\begin{tabular}{|lll|}
\hline
\underline{\textbf{Training:}} & & \\

$\mathrm{MSE}_{\mathrm{DeepONet}}$ 
&~\eqref{eq:deeponet_loss}
& Data-driven training objective \\

\hline 
\underline{\textbf{Testing:}} & & \\

$\mathrm{MSE}_{\infty, p}$ 
&~\eqref{eq:MSE_test_pinf}
& $p$-Laplacian: error vs limiting infinity solution \\
\hline

\end{tabular}
\caption{\small Mean squared error quantities used for DeepONet training and evaluation.}
\label{tab:deeponet_mse}
\end{table}

\subsection{2D infinity Laplace equation with Dirichlet boundary conditions}
\label{subsec:2Dinflap-dbc}

The first set of experiments considers two-dimensional infinity Laplace boundary value problems with Dirichlet boundary conditions. 
For each problem, a separate PINN model is trained to assess its ability to approximate solutions of the infinity Laplacian. 
The training process employs interior points for the PDE residual loss and boundary points for the data-driven loss. 

For square domains, $10{,}000$ evenly spaced points are placed along each edge, giving $40{,}000$ boundary points in total. 
For the unit disc, $10{,}000$ points are placed uniformly in angle on the circle. Interior points (${\sim}\,10^6$) are resampled uniformly at random within the domain at each epoch. 
The models are trained for $100$ epochs. The training boundary loss (blue) corresponds to the mean squared error evaluated on boundary points, 
the training PINNs loss (orange) corresponds to the mean squared PDE residual evaluated at interior points, 
and the testing loss (green) is the mean squared error computed on a $100\times100$ uniform grid ($10{,}000$ points for the square domain, $7{,}668$ points after disc masking).

For the infinity Laplacian $\Delta_\infty u = (\nabla u)^T D^2 u\, \nabla u$, the PDE residual of a neural network approximation $u_\theta$ satisfies
\begin{equation}
\label{eq:residual_bound}
|r(\bx)| = |(\nabla u_\theta)^T D^2 u_\theta\, \nabla u_\theta|\leq \|\nabla u_\theta\|^2 \, \|D^2 u_\theta\|,
\end{equation}
in a pointwise sense. Convergence analysis requires the PDE residual to be bounded; however, the bound~\eqref{eq:residual_bound} scales quadratically in the gradient norm $\|\nabla u_\theta\|$, which can become large during training. To obtain a tighter bound, we employ the $\eta$-normalized formulation
\begin{equation*}
\frac{(\nabla u)^T D^2 u\, \nabla u}{\eta^2 + \|\nabla u\|^2} = 0,
\end{equation*}
with $\eta = 10^{-5}$.
Since the numerator vanishes for any solution of $\Delta_\infty u = 0$, this defines the same solution set. This normalized residual satisfies
\begin{equation}
\label{eq:normalized_bound}
\left|\frac{(\nabla u_\theta)^T D^2 u_\theta\, \nabla u_\theta}{\eta^2 + \|\nabla u_\theta\|^2}\right| \leq \|D^2 u_\theta\|,
\end{equation}
which removes the bound dependence on $\|\nabla u_\theta\|$ entirely, making it more stable during training.

To further enforce this bound during training, we apply outlier removal, excluding the top $2\%$ of extreme PDE residuals at each optimization step. 
This ensures that the empirical PDE loss $\mathrm{MSE}_{\mathrm{PINNS}}$ is not dominated by a small set of points 
where the network's approximation temporarily violates the regularity assumptions underlying \eqref{eq:normalized_bound}. 

In addition, the PDE residual is clamped to the interval $[-10,\,10]$ before computing the MSE loss, preventing isolated large residuals from destabilizing early training. 
Together, the $\eta$-normalization addresses the operator-level scaling of the residual, while residual clipping and outlier removal provide robustness at the optimization level.

The remaining training hyperparameters are chosen as follows and are shared across all three test cases. 
The PDE loss weight $\alpha$ is controlled by an adaptive plateau scheduler: $\alpha$ is initialized at $10^{-5}$ and multiplied by $10$ 
whenever the epoch-averaged boundary loss fails to improve by at least $20\%$ over $5$ consecutive epochs, up to a maximum of $10^{-1}$. 
This curriculum ensures that the network first fits the boundary data before gradually increasing the PDE residual's influence. 
A $20\%$ validation split with best-model checkpointing is used to select the model with the lowest boundary validation loss. 
See~\ref{app:hyperparams} for full training details and~\ref{app:ablation} for an ablation study of these choices.

\begin{table}[h]
\begin{center}
\scalebox{0.8}{
\begin{tabular}{c|cc|cc|cc}
\hline
\hline
\multirow{3}{*}{Example} &  \multirow{3}{*}{Exact solution}  & \multirow{3}{*}{$\|u_\infty\|_{2,N_{\mathrm{test}}}$} & \multicolumn{2}{c|}{MSE} & \multicolumn{2}{c}{Runtime}\\
 & &  & Our method&\multirow{2}{*}{Newton}& PINN inference &\multirow{2}{*}{Newton}\\
 &&&(mean $\pm$ s.d.)&&(+training)& \\
\hline
Arctan, Fig \ref{fig:training1.1} & $\text{arctan}(y/x)$  & $8.862$e-01 & $(3.118\pm2.327)$e-07 & $5.892$e-05 & $1.456$e-04s (+$1453$s) & $2776$s\\
Aronsson (square), Fig \ref{fig:training1.3} & $|x|^{\frac{4}{3}}-|y|^{\frac{4}{3}}$ & $4.277$e-01 & $(4.468\pm1.696)$e-07 & $9.511$e-05  & $7.719$e-05s (+$1064$s) & $6955$s\\
Aronsson (disc), Fig \ref{fig:training1.4} & $|x|^{\frac{4}{3}}-|y|^{\frac{4}{3}}$ & $4.284$e-01 & $(6.247\pm2.805)$e-07 & $2.522$e-04  & $7.608$e-05s (+$905$s) & $17447$s\\
\hline
\hline
\end{tabular}
}
\end{center}
\caption{\small Comparison of our PINN method and the Newton FEM method for solving
the two-dimensional infinity-Laplacian equation subject to Dirichlet boundary
conditions. The PINN MSE is reported as the mean $\pm$ sample standard
deviation over five runs with random seeds $1234$, $3456$, $5678$, $6789$,
and $8765$. The reported PINN timings and loss histories in Figures \ref{fig:training1.1}-\ref{fig:training1.4} correspond to the seed-$1234$
runs. The Newton FEM results are deterministic results computed on the fixed
meshes described in Table~\ref{tbl:meshes}.}
\label{tbl:mse1}
\end{table}
To examine seed sensitivity, we repeated each of the three PINN experiments using five fixed random seeds: $1234$, $3456$, $5678$, $6789$, and $8765$. The resulting means and sample standard deviations are reported in Table~\ref{tbl:mse1}. The mean test errors remain of order $10^{-7}$ for all three problems. The FEM errors reported are computed on the fixed meshes used in these experiments. The comparison represents accuracy at a representative reference resolution. The PINN computations were performed on an NVIDIA RTX 5880 Ada GPU, whereas the Newton FEM computations were performed on an Intel Xeon Gold 5420+ CPU deal.II configured to use up to 112 threads.
Table~\ref{tbl:mse1} summarizes the testing MSE values, training and inference times for the considered test cases.
Both the PINN and FEM errors are computed against the known exact solution. The FEM results provide a numerical baseline for the same test problems.
The PINN models provide fast inference once training is complete. 
Detailed results for each example are discussed in the subsections that follow.
Training error evolution is illustrated in the corresponding figures.

\subsubsection{Arctan example (homogeneous, Dirichlet)}
\label{subsec:arctan}

Consider the following boundary value problem:
\begin{align*}
-\Delta_{\infty} u &=  0, &\qquad \text{ in } \Omega,\\
u(x,y) &= \text{arctan}(y/x), &\qquad \text{ on } \partial\Omega.
\end{align*}
The computational domain is defined as $[0.01,1] \times [0.01,1]$. This example admits a smooth solution on the given computational domain, which excludes the coordinate singularity at the origin. It therefore provides a convenient test case for evaluating the accuracy of the PINNs approximations.

For the seed-$1234$ run, the achieved training boundary loss is $1.395\times10^{-7}$ and the training PINNs loss is $9.338\times10^{-6}$. 
Averaged over five random seeds, the test MSE is $(3.118\pm2.327)\times10^{-7}$, where the reported variation is the sample standard deviation.
Figure~\ref{fig:training1.1} shows that the PINN prediction closely matches the exact solution.

The total training time is approximately $1453$s for $100$ epochs. The inference time is $1.456\times10^{-4}$s for evaluation over the entire domain. In comparison, the Newton-based FEM solver requires approximately $2776$s to compute the reference solution. 
At the fixed FEM resolution used in this experiment, the mean PINN test error, $(3.118\pm2.327)\times10^{-7}$ over five random seeds, is more than two orders of magnitude below the FEM error of $5.892\times10^{-5}$.

\begin{figure}[h]
\centering
\begin{subfigure}[b]{0.32\textwidth}
\centering
\includegraphics[width=\textwidth]{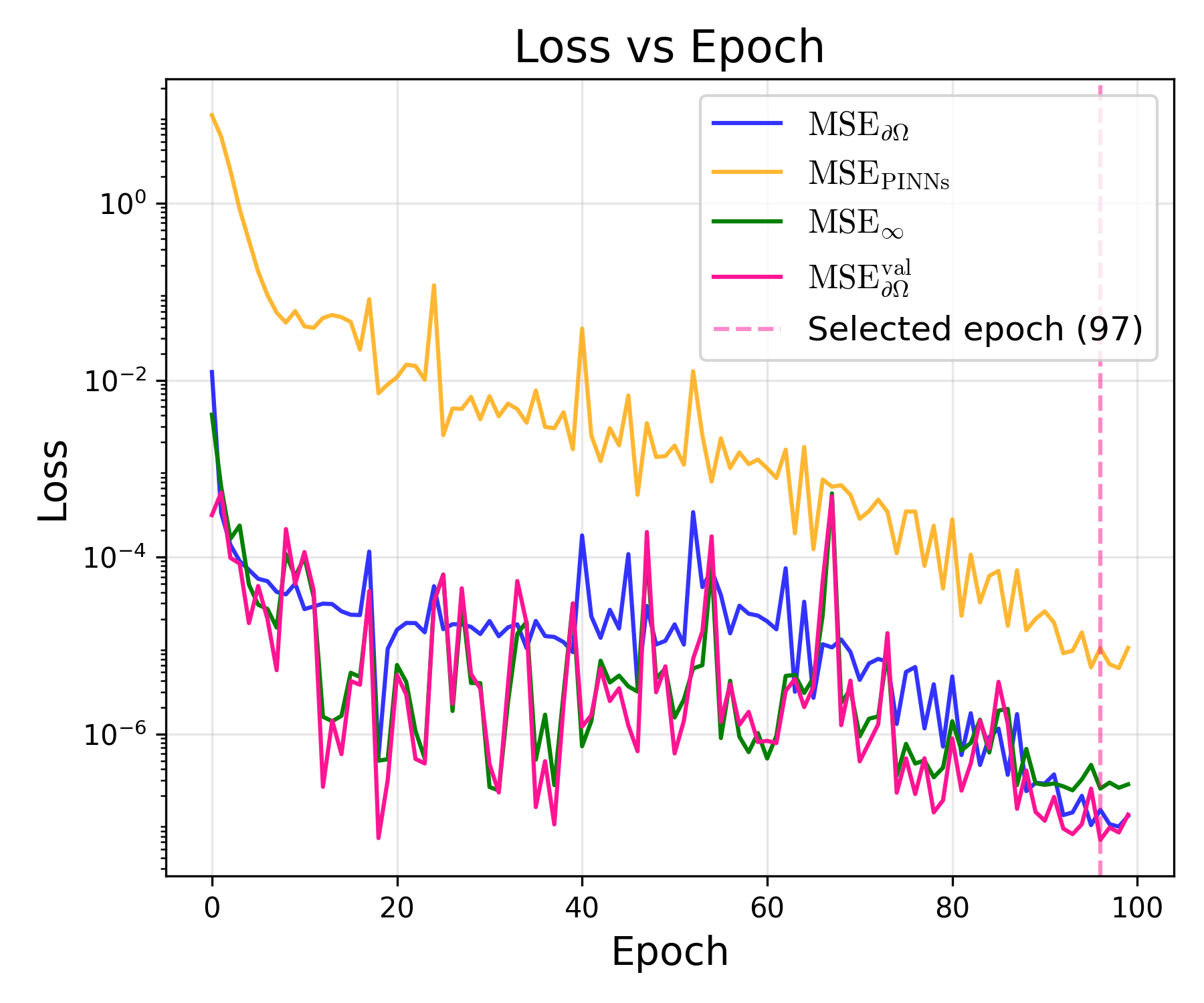}
\caption[]%
{{\small }}    
\end{subfigure}
\begin{subfigure}[b]{0.32\textwidth}
\centering
\includegraphics[width=\textwidth]{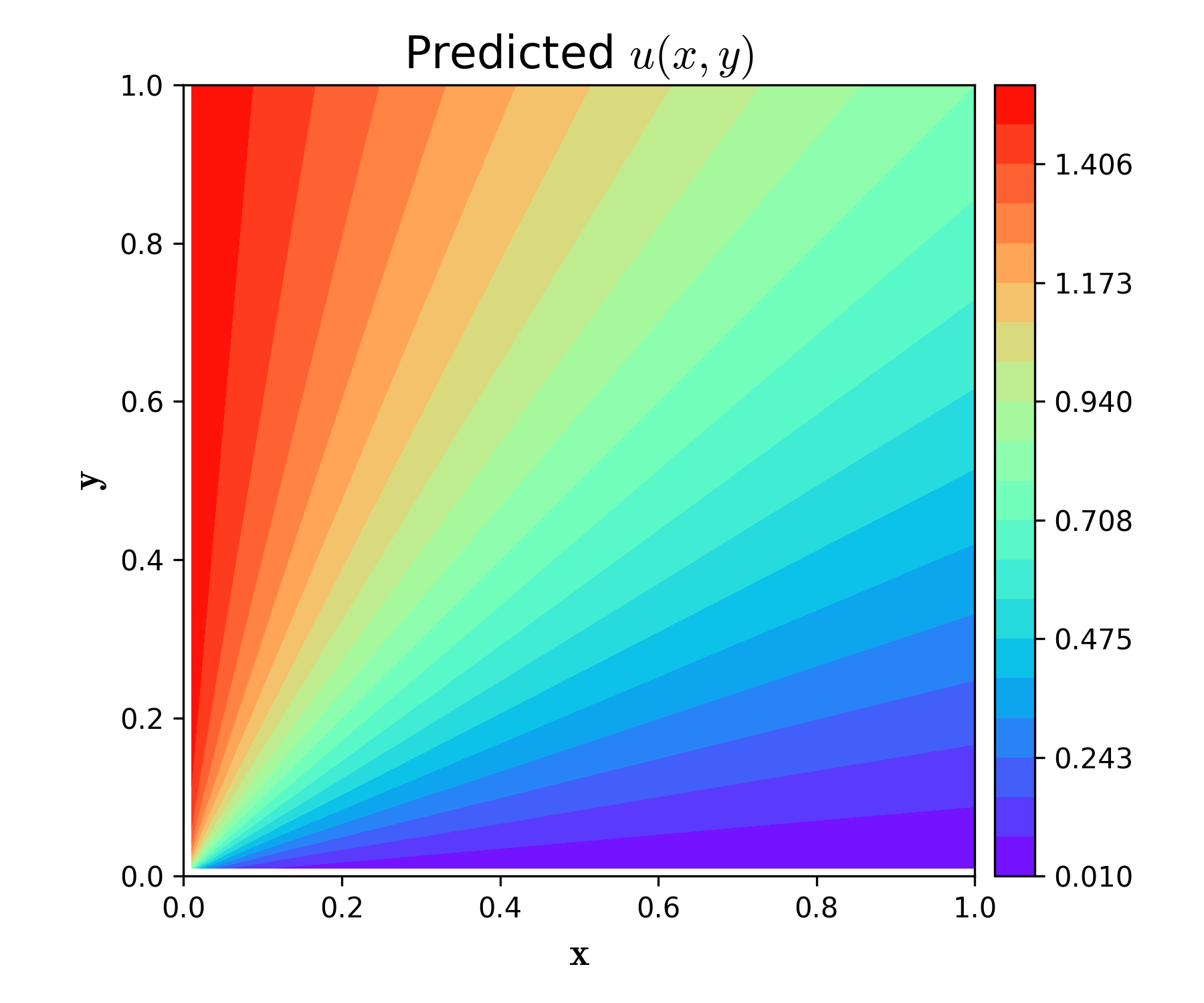}
\caption[]%
{{\small }}    
\end{subfigure}
\begin{subfigure}[b]{0.32\textwidth}  
\centering 
\includegraphics[width=\textwidth]{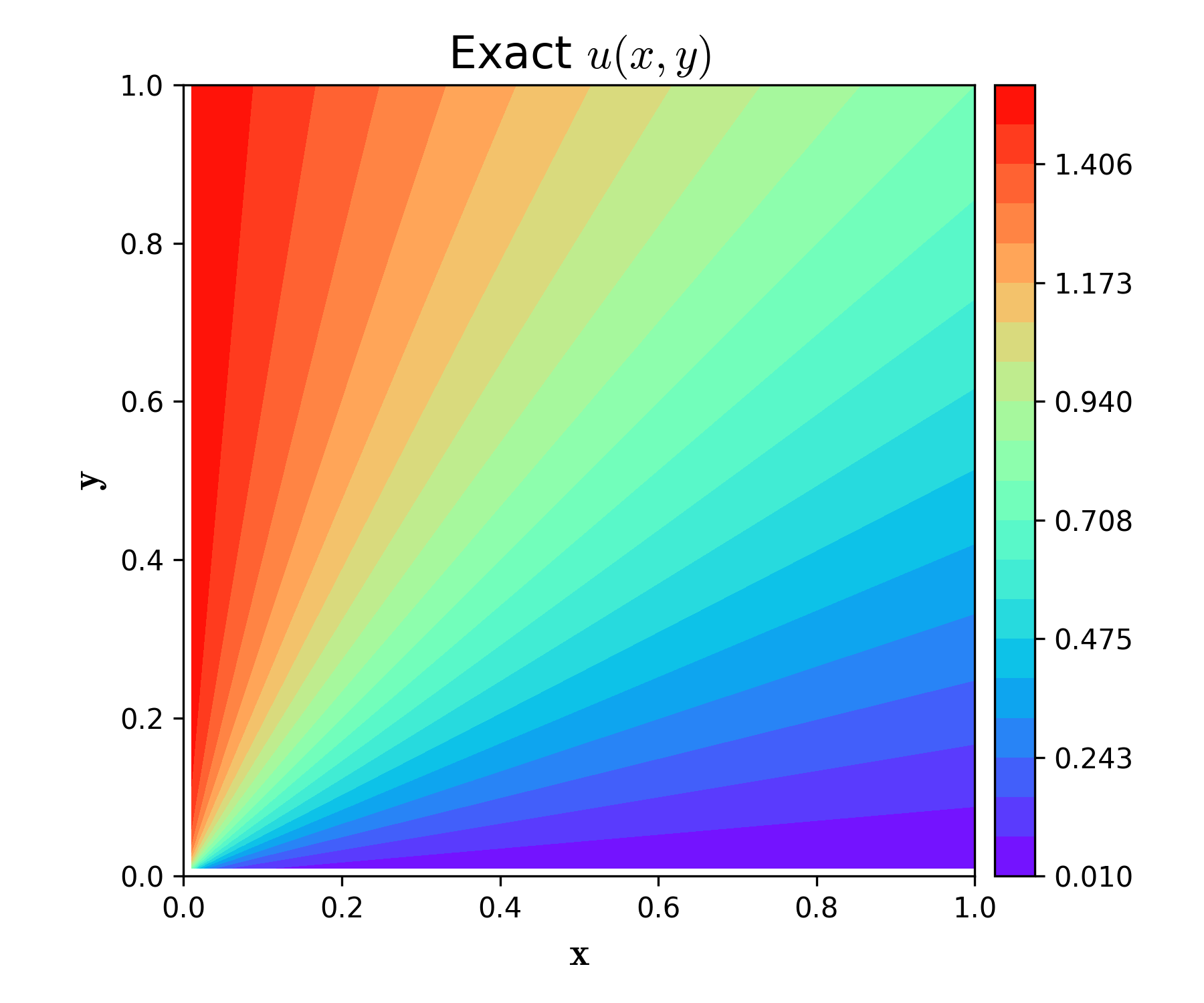}
\caption[]%
{{\small  }}    
\end{subfigure}
\caption[]%
{{\small Arctan example~\ref{subsec:arctan}. (a) Loss histories versus epoch: training boundary loss $\mathrm{MSE}_{\partial\Omega}$, training PINNs loss $\mathrm{MSE}_{\mathrm{PINNs}}$, testing loss $\mathrm{MSE}_{\infty}$, and validation boundary loss $\mathrm{MSE}_{\partial\Omega}^{\mathrm{val}}$, evaluated on the validation subset of boundary points. The dashed vertical line marks the checkpoint selected by the minimum validation boundary loss. (b) PINN prediction. (c) Exact solution.}  }
\label{fig:training1.1}
\end{figure}

\subsubsection{Aronsson example (homogeneous, Dirichlet)}
\label{subsec:aronsson}

The Aronsson problem for the infinity Laplacian is given by
\begin{align*}
-\Delta_{\infty} u &=  0, &\qquad \text{ in } \Omega,\\
u(x,y) &= |x|^{\frac{4}{3}}-|y|^{\frac{4}{3}}, &\qquad \text{ on } \partial\Omega.
\end{align*}

This example is a canonical test case for the infinity Laplacian, as the solution is only
$C^{1,\frac13}$, with nonsmoothness occurring along coordinate axes. 

For this example, we evaluate the model on two different domains.
In the first square domain, $[-1,1]\times[-1,1]$, the seed-$1234$ run achieves a training boundary loss of $4.111\times10^{-7}$ and a training PINNs loss of $9.810\times10^{-5}$. 
Averaged over five random seeds, the test MSE is $(4.468\pm1.696)\times10^{-7}$, which remains more than two orders of magnitude below the FEM error of $9.511\times10^{-5}$ at the fixed resolution used here.

On the unit disc domain, $\{(x,y)\in\mathbb{R}^2 \mid x^2+y^2\le1\}$, the seed-$1234$ run achieves a training boundary loss of $4.150\times10^{-7}$ and a training PINNs loss of $8.966\times10^{-5}$. 
Averaged over five random seeds, the test MSE is $(6.247\pm2.805)\times10^{-7}$. This mean error remains more than two orders of magnitude below the FEM error of $2.522\times10^{-4}$ at the fixed resolution listed in Table~\ref{tbl:meshes}.
Figures~\ref{fig:training1.3} and~\ref{fig:training1.4} illustrate that the PINNs predictions capture the qualitative structure of the exact solution on both domains.

The training time is approximately $1064$s and $905$s for $100$ epochs on the square and unit disc
domains, respectively.
The inference time for evaluation over the entire domain is $7.719\times10^{-5}$s and
$7.608\times10^{-5}$s, respectively.
This is significantly faster than the Newton-based FEM solver, which requires approximately
$6955$s and $17447$s for the square and unit disc domains.

\begin{figure}[h]
\centering
\begin{subfigure}[b]{0.32\textwidth}
\centering
\includegraphics[width=\textwidth]{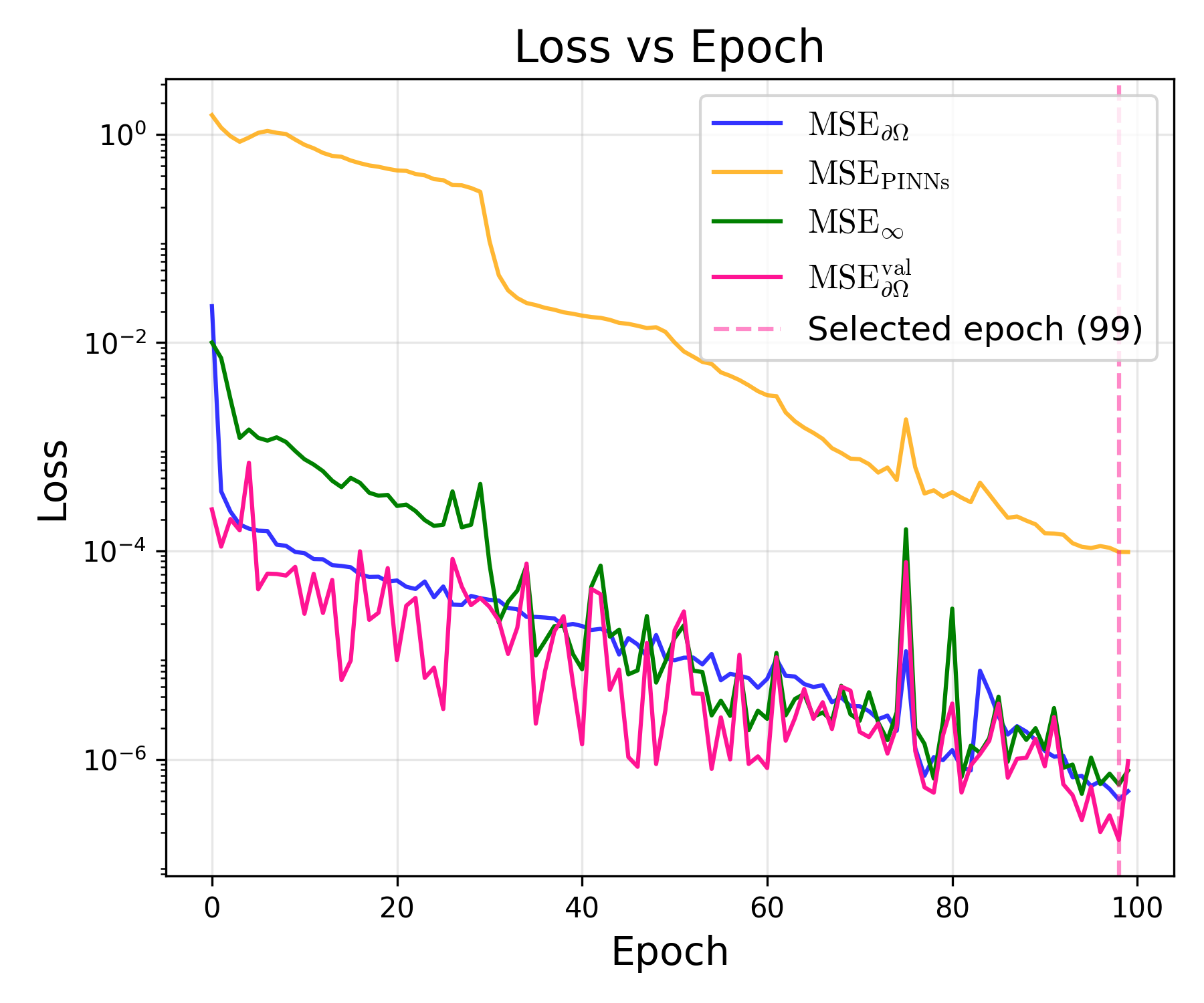}
\caption[]%
{{\small }}    
\end{subfigure}
\begin{subfigure}[b]{0.32\textwidth}
\centering
\includegraphics[width=\textwidth]{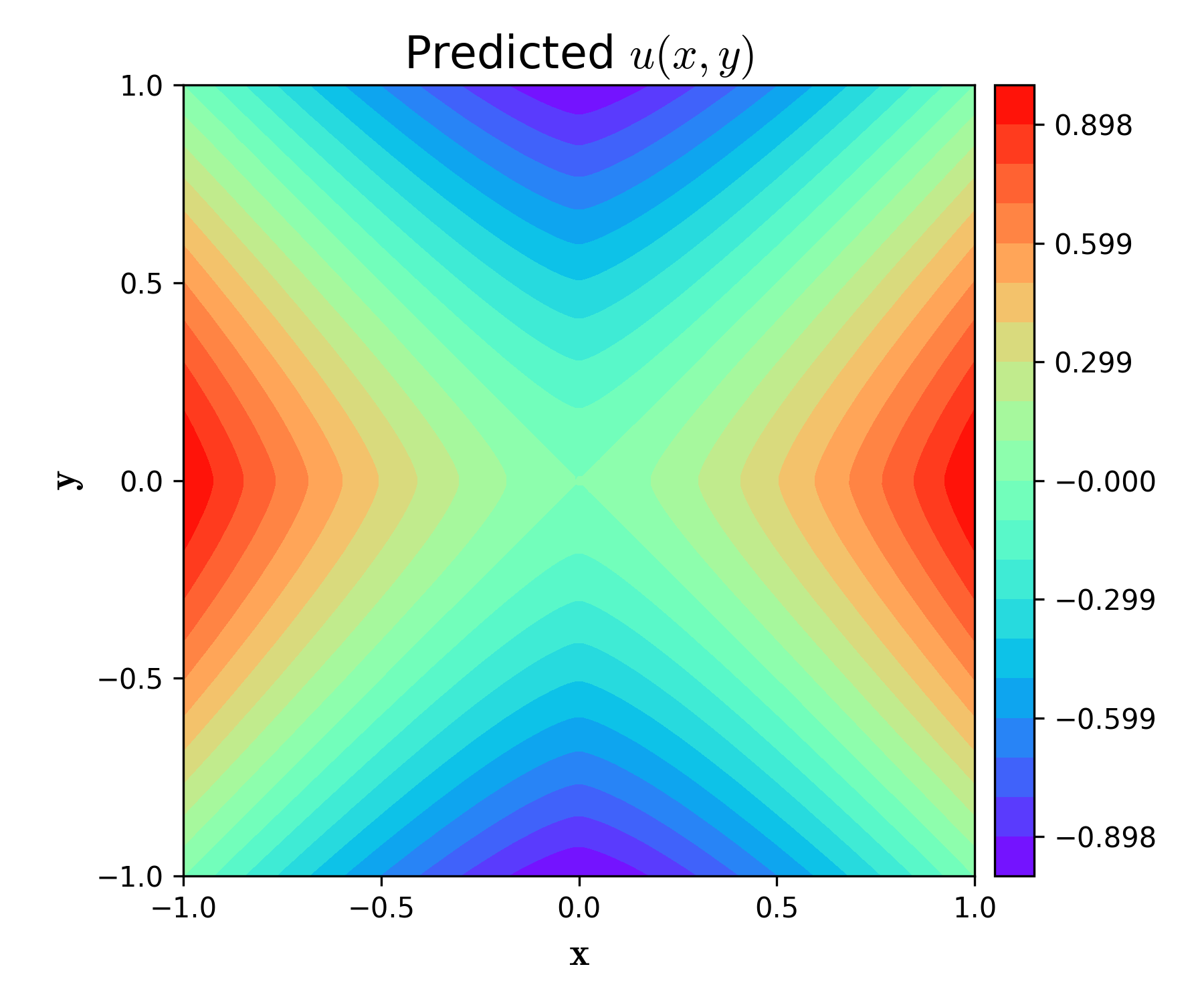}
\caption[]%
{{\small }}    
\end{subfigure}
\begin{subfigure}[b]{0.32\textwidth}  
\centering 
\includegraphics[width=\textwidth]{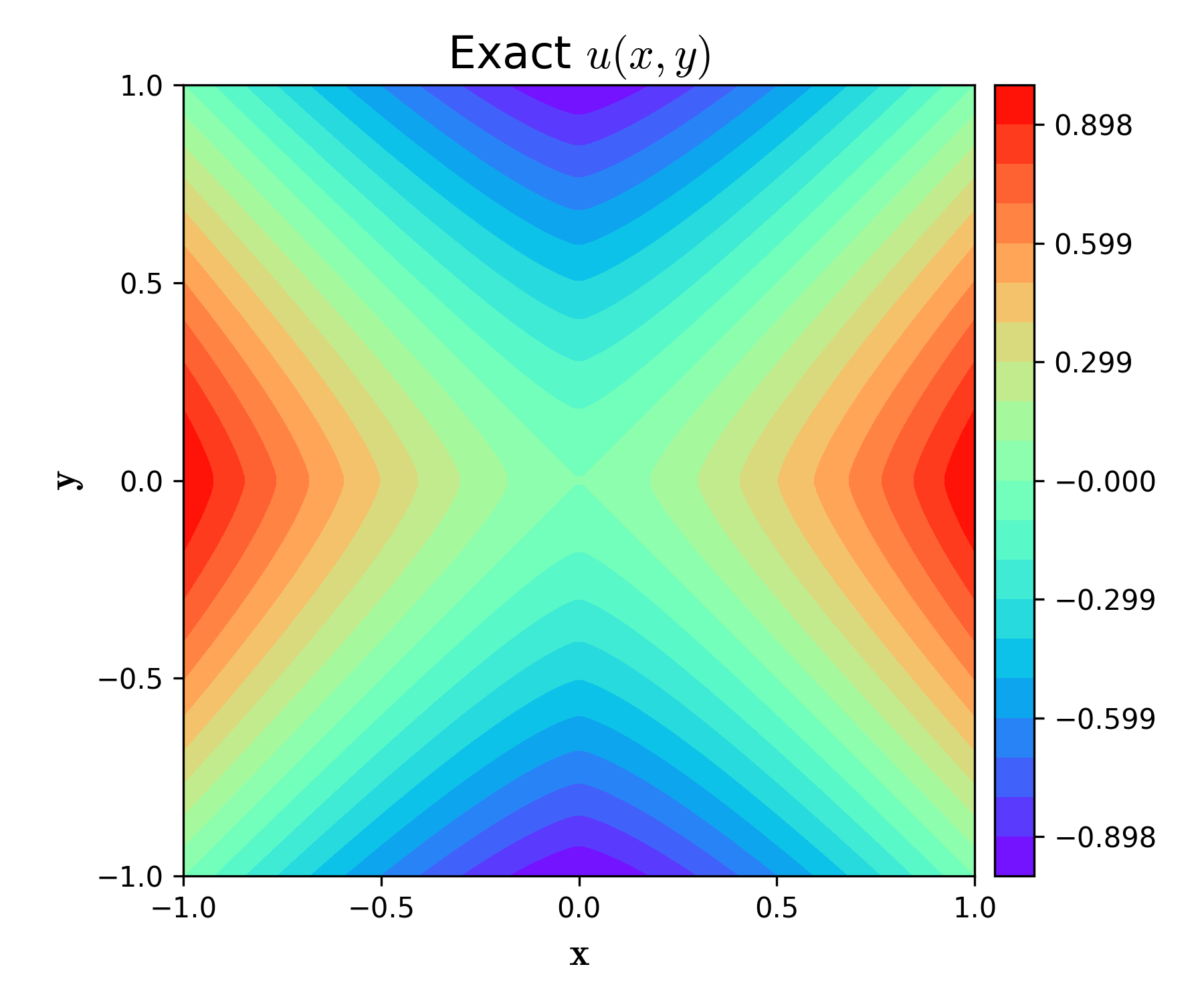}
\caption[]%
{{\small  }}    
\end{subfigure}
\caption[]%
{{\small Aronsson example~\ref{subsec:aronsson} on square domain. (a) Loss histories versus epoch: training boundary loss $\mathrm{MSE}_{\partial\Omega}$, training PINNs loss $\mathrm{MSE}_{\mathrm{PINNs}}$, testing loss $\mathrm{MSE}_{\infty}$, and validation boundary loss $\mathrm{MSE}_{\partial\Omega}^{\mathrm{val}}$, evaluated on the validation subset of boundary points. The dashed vertical line marks the checkpoint selected by the minimum validation boundary loss. (b) PINN prediction. (c) Exact solution.} }
\label{fig:training1.3}
\begin{subfigure}[b]{0.32\textwidth}
\centering
\includegraphics[width=\textwidth]{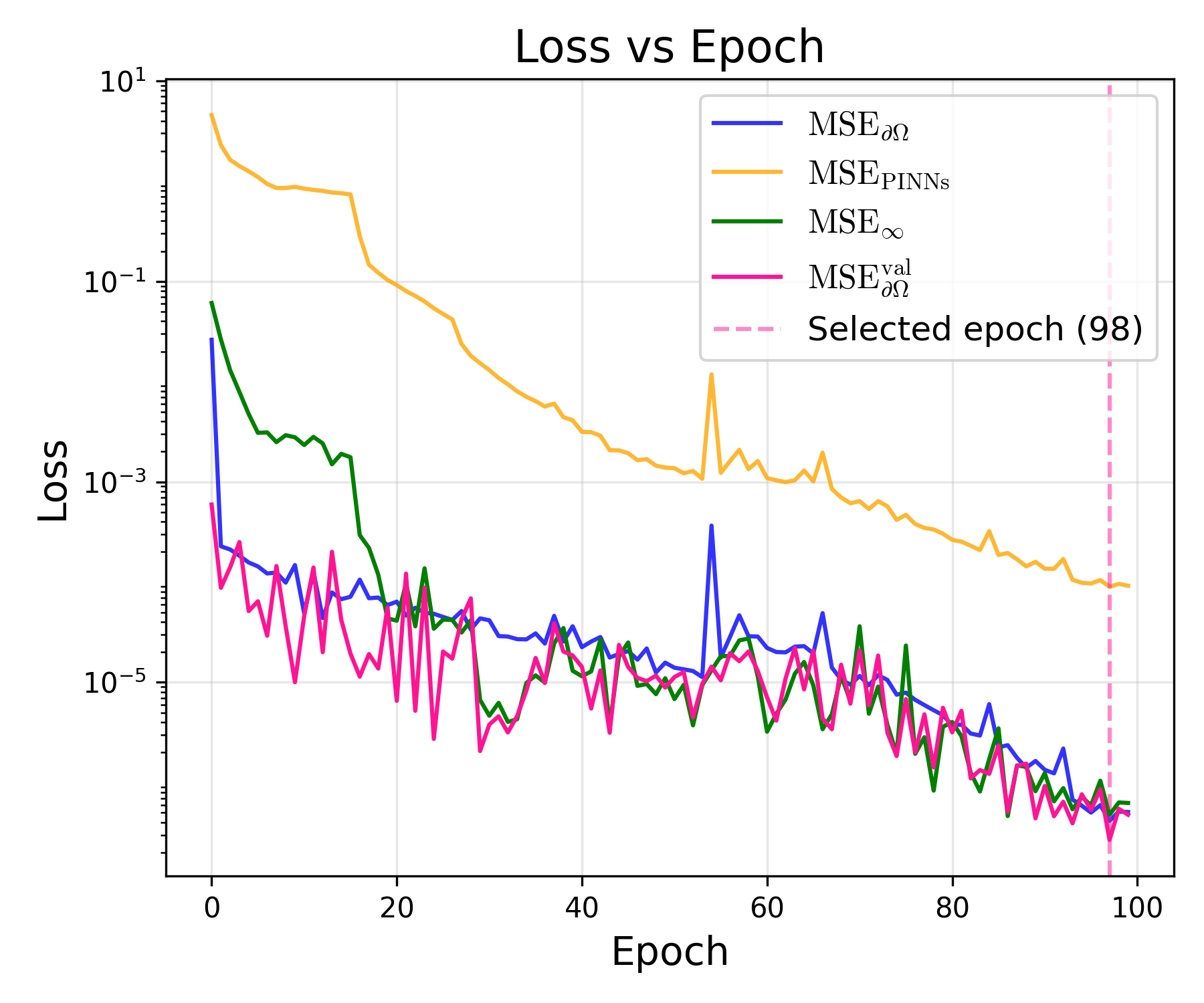}
\caption[]%
{{\small }}    
\end{subfigure}
\begin{subfigure}[b]{0.32\textwidth}
\centering
\includegraphics[width=\textwidth]{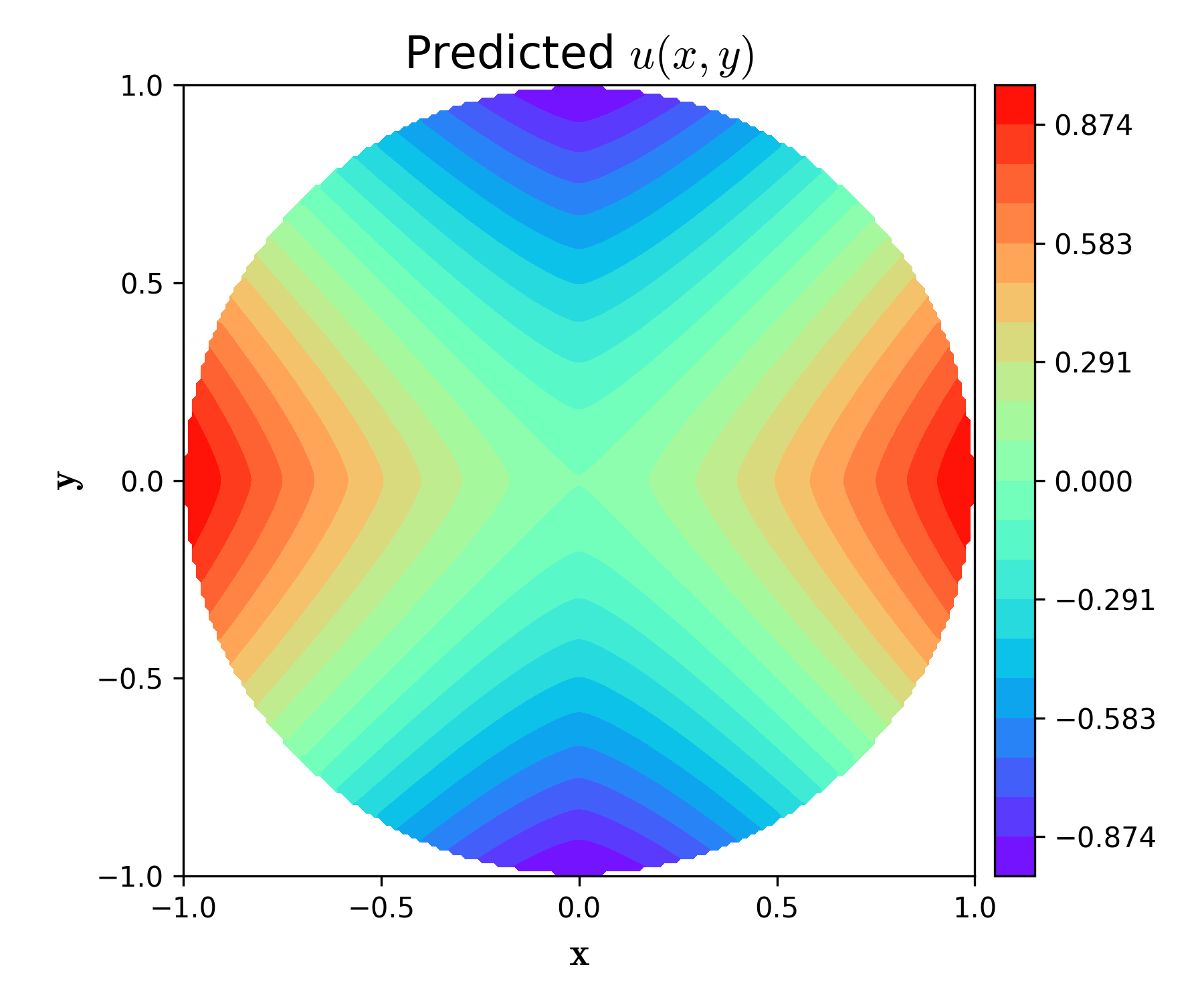}
\caption[]%
{{\small }}    
\end{subfigure}
\begin{subfigure}[b]{0.32\textwidth}  
\centering 
\includegraphics[width=\textwidth]{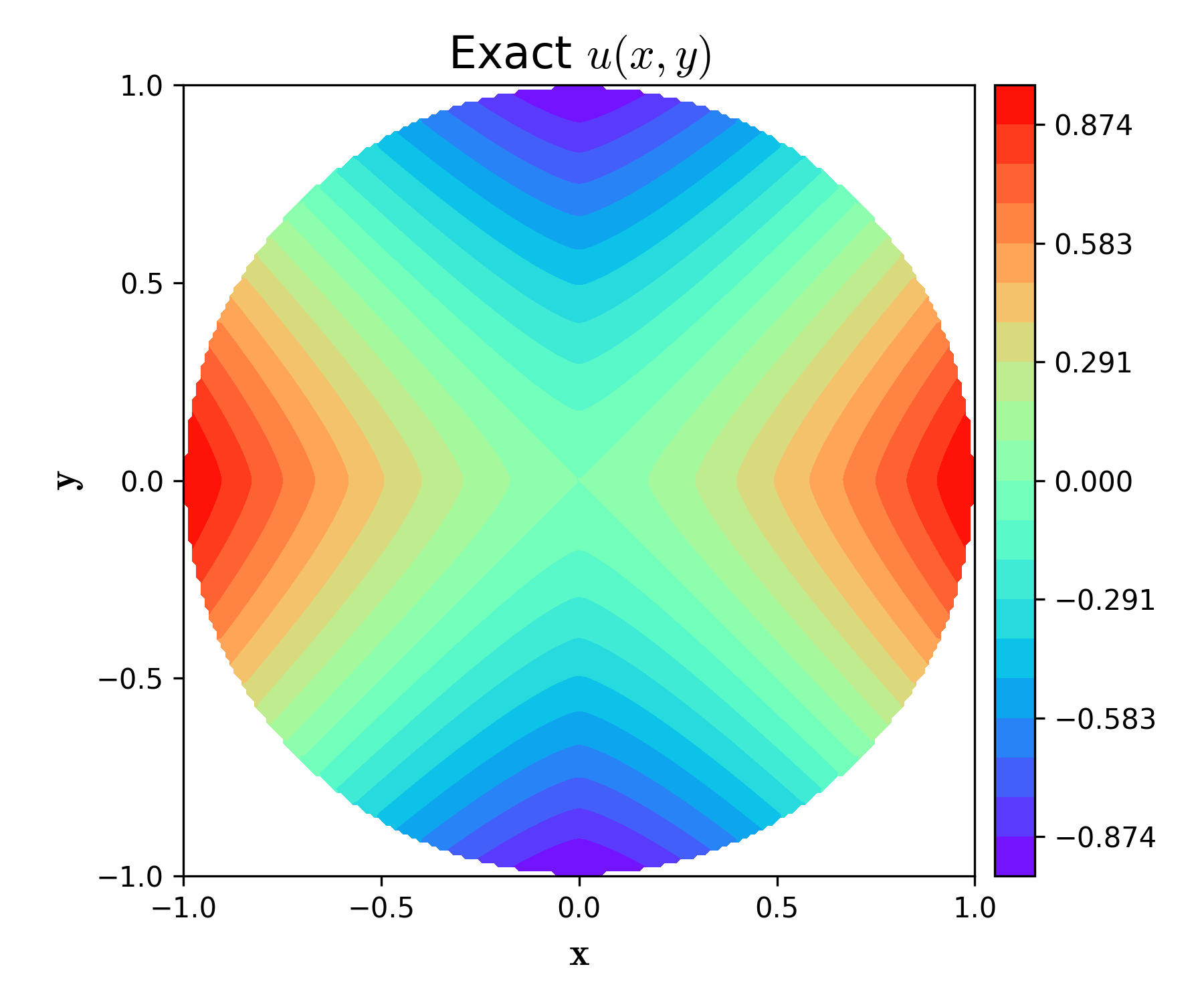}
\caption[]%
{{\small  }}    
\end{subfigure}
\caption[]%
{{\small Aronsson example~\ref{subsec:aronsson} on unit disc. (a) Loss histories versus epoch: training boundary loss $\mathrm{MSE}_{\partial\Omega}$, training PINNs loss $\mathrm{MSE}_{\mathrm{PINNs}}$, testing loss $\mathrm{MSE}_{\infty}$, and validation boundary loss $\mathrm{MSE}_{\partial\Omega}^{\mathrm{val}}$, evaluated on the validation subset of boundary points. The dashed vertical line marks the checkpoint selected by the minimum validation boundary loss. (b) PINN prediction. (c) Exact solution.} }
\label{fig:training1.4}
\end{figure}

\subsection{$p$-Laplace equation with Dirichlet boundary conditions}
\label{subsec:plaplacian-dbc}

In the second set of experiments, we investigate the use of PINNs to approximate solutions of
the $p$-Laplacian with Dirichlet boundary conditions.
We seek to empirically assess the performance of PINNs for nonlinear elliptic problems
as the nonlinearity parameter $p$ increases.

We specifically train PINN models for $p$-Laplacian equations with varying values of $p$, ranging
from $2$ to $1000$.
Our primary interest is to examine whether the computed PINNs solutions exhibit a systematic
trend toward the corresponding infinity Laplacian solution as $p$ increases.
We also compare two different training strategies: simple training and iterative training.

In addition to the Aronsson example, we consider an inhomogeneous problem on the unit disc, where the exact solution is known for finite $p>1$. This provides a direct validation of the iterative PINN approximation against $u_p$, rather than only an assessment of convergence toward the limiting profile $u_\infty$.

In the simple training strategy, independent PINN models are trained from scratch for each
value of $p$, using only the corresponding PDE residual and boundary losses.
In contrast, the iterative training strategy involves initializing the PINN model for a given
value of $p$ using the pretrained network weights obtained for a smaller value of $p$, and then
fine-tuning the model for the next $p$ value.
This continuation-based approach is motivated by the strong dependence of the $p$-Laplace
operator on $p$ and mirrors continuation strategies commonly employed in traditional numerical
methods.

The network architecture, optimizer, learning rate, grid sizes, and batch sizes are the same as those used for the infinity-Laplacian experiments described in Section~\ref{subsec:2Dinflap-dbc}. 
The same stabilization framework is used, except that the regularized $p$-Laplacian residual is clipped to $[-100,100]$, whereas the infinity-Laplacian residual is clipped to $[-10,10]$. 
See~\ref{app:hyperparams} for the remaining hyperparameters.
In addition to the gradient-norm clamping described above, we regularize the singular prefactor in the expanded $p$-Laplacian.
Writing $\Delta_p u = |\nabla u|^{p-4}\bigl[|\nabla u|^{2}\,\Delta u
+ (p-2)\,\nabla u^{T} D^{2}u\,\nabla u\bigr]$, we replace only the
prefactor $|\nabla u|^{p-4}$ to obtain
\begin{align*}
\tilde{\Delta}_p u = (\eta^{2}+|\nabla u|^{2})^{(p-4)/2} \bigl[|\nabla u|^{2}\,\Delta u + (p-2)\,\nabla u^{T} D^{2}u\,\nabla u\bigr], \qquad \eta = 10^{-5},
\end{align*}
which avoids the singularity at $|\nabla u| = 0$ and numerical blow-up for large~$p$. This modification should be interpreted as a residual-reweighting device for the homogeneous equation, rather than as the flux-based regularization used in the Newton formulation of Section~\ref{section:data}. Away from points where $|\nabla u|=0$, it differs from the original homogeneous $p$-Laplacian residual only by a positive multiplicative factor and therefore preserves its zero set.

\subsubsection{$p$-Laplacian: Aronsson example
with simple PINNs training}

In this experiment, we utilize the Aronsson example to examine the behavior of PINNs when applied
to the $p$-Laplacian for increasing values of $p$.
The goal is to assess whether independently trained PINN models recover the expected
asymptotic behavior as $p \to \infty$.

The Aronsson problem is given by
\begin{align*}
-\Delta_{p} u &=  0, &\qquad \text{ in } \Omega,\\
u(x,y) &= |x|^{\frac{4}{3}}-|y|^{\frac{4}{3}}, &\qquad \text{ on } \partial\Omega.
\end{align*}
Note that $u(x,y)=|x|^{\frac{4}{3}}-|y|^{\frac{4}{3}}$ is an exact solution of the homogeneous
infinity Laplacian~\cite{Aronsson1984}.
This solution is nonsmooth, with reduced regularity along coordinate axes, which is known to cause
difficulties for many numerical methods, including standard finite difference schemes~\cite{OBERMANsecond}.

For this experiment, we employ the simple PINNs training strategy, in which independent models
are trained from scratch for each value of $p$.
The total training loss is defined as the sum of the mean squared error evaluated on boundary
points (training boundary loss) and the PDE residual loss (PINNs loss).

We observe that the mean squared error with respect to the infinity Laplacian solution does not
decrease monotonically as $p$ increases.
Specifically, we measure the error using $\text{MSE}_{\infty}$ as defined in equation \eqref{eq:MSE_test}.

This behavior suggests that, when trained independently, the PINNs formulation for the
$p$-Laplacian may become unstable for large values of $p$.
In particular, the $p$-dependent nonlinearities in the PDE residual can lead to numerical
instabilities and loss imbalance when the network initialization is far from the true solution.

The training time is approximately $2{,}962$\,s for $100$ epochs, and the inference time for evaluation over the entire domain is $2.582\times10^{-4}$\,s.

\subsubsection{$p$-Laplacian: Aronsson example with iterative PINNs training}

In this experiment, we consider both the mean squared error
$\text{MSE}_\infty$ between the predicted solution and the infinity Laplacian limit,
and the residual error $\text{MSE}_{\Delta_{\infty}}$ associated with the homogeneous
infinity Laplace operator, defined in equation \eqref{eq:MSE_test_inflapres}.

The purpose of this experiment is to study how these two error measures depend on the parameter
$p$ when PINNs are trained using an iterative, continuation-based strategy.

The iterative training is organized as a five-band continuation pipeline.
The first band trains the model from scratch for
$p\in\{2,3,\dots,10,15,20\}$ using the adaptive plateau scheduler and a
cosine-annealed learning rate, with early stopping 
in this first band only
when both the validation
boundary loss and PDE residual drop below prescribed thresholds.
Each subsequent band loads the best-model checkpoint from the previous band
and fine-tunes for the next group of $p$ values using a fixed $\alpha$ and a
fixed learning rate of $10^{-5}$. 
The value of $\alpha$ decreases across the successive bands to compensate for the increasing magnitude of the unweighted PDE residual at larger $p$ and to prevent the PDE term from dominating the total loss.
The full band-by-band schedule is listed in~\ref{app:iter-schedule}.

By training the PINN models iteratively from $p=2$ to $p=1000$,
we study whether the continuation procedure can track the limiting infinity-Laplacian profile in the large-$p$ regime.
The total training loss is defined as the sum of the mean squared error evaluated on boundary
points (boundary training loss) and the PDE residual loss (PINNs loss).

When $p$ increases from $2$ to $1000$, 
the testing error $\text{MSE}_\infty$ initially decreases with increasing $p$.
For values of $p$ larger than approximately $100$, 
the testing error and PDE residual error $\text{MSE}_{\Delta_{\infty}}$ continue to decrease as $p$
increases, which is consistent with the expected convergence of the $p$-Laplacian operator toward
the infinity Laplacian in the $p\to\infty$ limit.
These results indicate that iterative PINNs training
improves stability as a large-$p$ continuation procedure compared to simple training.

The total training time for the five-band iterative pipeline is approximately $11{,}769$\,s ($1{,}169$ total epochs across all bands and $p$ values), 
and the inference time for evaluation over the entire domain is $7.651\times10^{-5}$\,s.

Table~\ref{tbl:mse-combined} compares the simple and iterative PINNs training strategies.
Under simple training, both the training loss and
$\text{MSE}_\infty$ increase with $p$, eventually leading to failure for large values of $p$.
In contrast, iterative training stabilizes the boundary loss and the error to the limiting solution across a wide range of $p$, 
allowing us to continue the network profile up to the diagnostic value $p=1000$.
We emphasize that the largest-$p$ entries should be interpreted as large-$p$ continuation diagnostics rather than fully verified finite-$p$ PINN solves. 
In the final continuation band, the PDE residual weight is reduced to $\alpha=10^{-5}$ for stability, see ~\ref{app:iter-schedule}. 
Thus, the total loss may remain small even when the unweighted $p$-Laplacian residual is large. 
The small $\mathrm{MSE}_{\infty}$ and $\mathrm{MSE}_{\Delta_\infty}$ values indicate accuracy with respect to the limiting infinity-Laplacian solution, 
not a separate verification of the finite-$p$ equation at $p=1000$.

Figure~\ref{fig:training3}(a) visualizes the dependence of the total training loss,
$\text{MSE}_\infty$, and $\text{MSE}_{\Delta_{\infty}}$ on $p$ for the Aronsson example~\ref{subsec:aronsson} on square.

\begin{table}[h]
\captionsetup{font=small}
\begin{center}
\scalebox{0.72}{
\begin{tabular}{c|cccc|ccccc|c}
\hline
\hline
 & \multicolumn{4}{c|}{Simple training} & \multicolumn{5}{c|}{Iterative training} & \\
\cline{2-10}
 $p$ & total loss & $\text{MSE}_{\partial \Omega}$ & PINNs loss & $\text{MSE}_\infty$ & total loss & $\text{MSE}_{\partial \Omega}$ & PINNs loss & $\text{MSE}_\infty$ & $\text{MSE}_{\Delta_{\infty}}$&$\|u_\infty\|_{2,N_{\mathrm{test}}}$\\
\hline
$2$    & $4.455$e-05 & $1.371$e-05 & $3.083$e-03 & $4.429$e-03 & $4.455$e-05 & $1.371$e-05 & $3.083$e-03 & $4.429$e-03 & $2.142$e+01 & $4.277$e-01 \\
$3$    & $4.561$e-05 & $1.192$e-05 & $3.369$e-03 & $1.499$e-03 & $3.818$e-05 & $9.955$e-06 & $2.823$e-03 & $1.466$e-03 & $3.455$e+00 & $4.277$e-01\\
$4$    & $5.200$e-07 & $1.685$e-07 & $3.516$e-03 & $7.125$e-04 & $4.501$e-05 & $8.129$e-06 & $3.688$e-03 & $7.756$e-04 & $1.294$e+00 & $4.277$e-01 \\
$5$    & $5.026$e-07 & $1.686$e-07 & $3.340$e-03 & $4.253$e-04 & $6.131$e-05 & $1.216$e-05 & $4.914$e-03 & $4.592$e-04 & $6.440$e-01 & $4.277$e-01\\
$10$   & $1.443$e-05 & $2.332$e-06 & $1.210$e-03 & $1.057$e-04 & $1.301$e-04 & $1.164$e-05 & $1.185$e-02 & $1.449$e-04 & $1.317$e-01 & $4.277$e-01\\
$50$   & $4.726$e-04 & $2.545$e-05 & $4.471$e-02 & $2.712$e-05 & $1.998$e-04 & $1.108$e-05 & $1.888$e-02 & $1.034$e-05 & $1.052$e-02 & $4.277$e-01\\
$100$  & $3.674$e-04 & $1.458$e-05 & $3.528$e-02 & $1.194$e-05 & $6.388$e-05 & $2.761$e-06 & $6.112$e-02 & $3.754$e-06 & $3.804$e-03 & $4.277$e-01\\
$200$  & $6.930$e-03 & $5.296$e-03 & $1.634$e-02 & $1.652$e-02 & $2.771$e-05 & $1.850$e-06 & $2.586$e-01 & $1.808$e-06 & $1.835$e-03 & $4.277$e-01\\
$1000$ & $4.287$e+01 & $5.172$e-04 & $4.287$e+02 & $1.085$e-01 & $6.812$e-05 & $1.762$e-06 & $6.635$e+00 & $1.056$e-06 & $7.236$e-04 & $4.277$e-01\\
\hline
\hline
\end{tabular}
}
\end{center}
\caption{Comparison of simple and iterative PINNs training for the Aronsson example on the square domain.
For both methods, we report the total training loss, boundary loss $\text{MSE}_{\partial \Omega}$, PINNs physical loss, and solution error $\text{MSE}_\infty$.
For iterative training, we additionally report the residual error $\text{MSE}_{\Delta_{\infty}}$ under the homogeneous infinity Laplacian operator.}\label{tbl:mse-combined}
\end{table}

\begin{figure*}[t]
\centering

\begin{subfigure}[t]{0.48\textwidth}
    \centering
    \includegraphics[width=\linewidth]
    {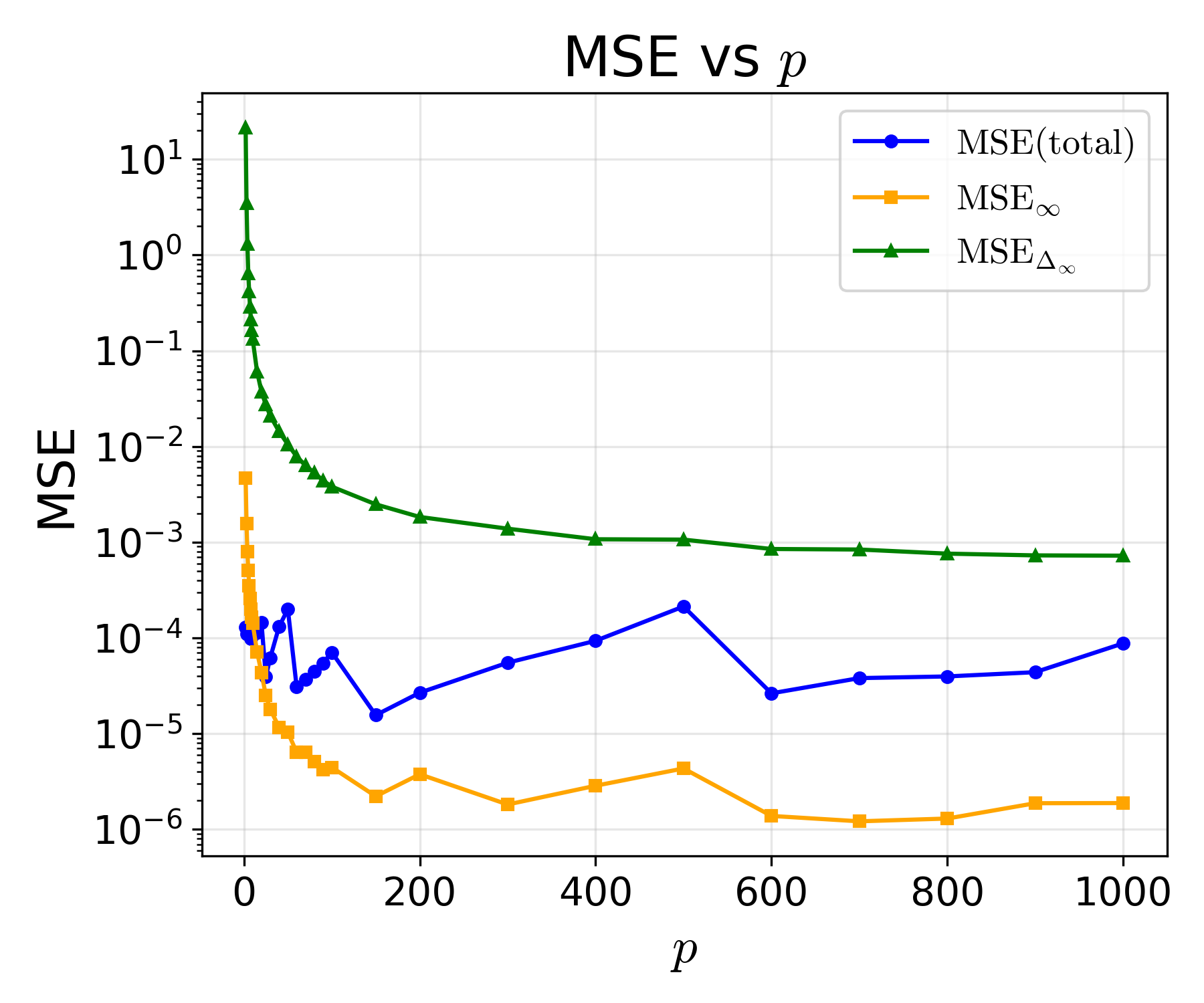}
    \caption{\small Aronsson example.}
    \label{fig:training3-aronsson}
\end{subfigure}
\hfill
\begin{subfigure}[t]{0.48\textwidth}
    \centering
    \includegraphics[width=\linewidth]
    {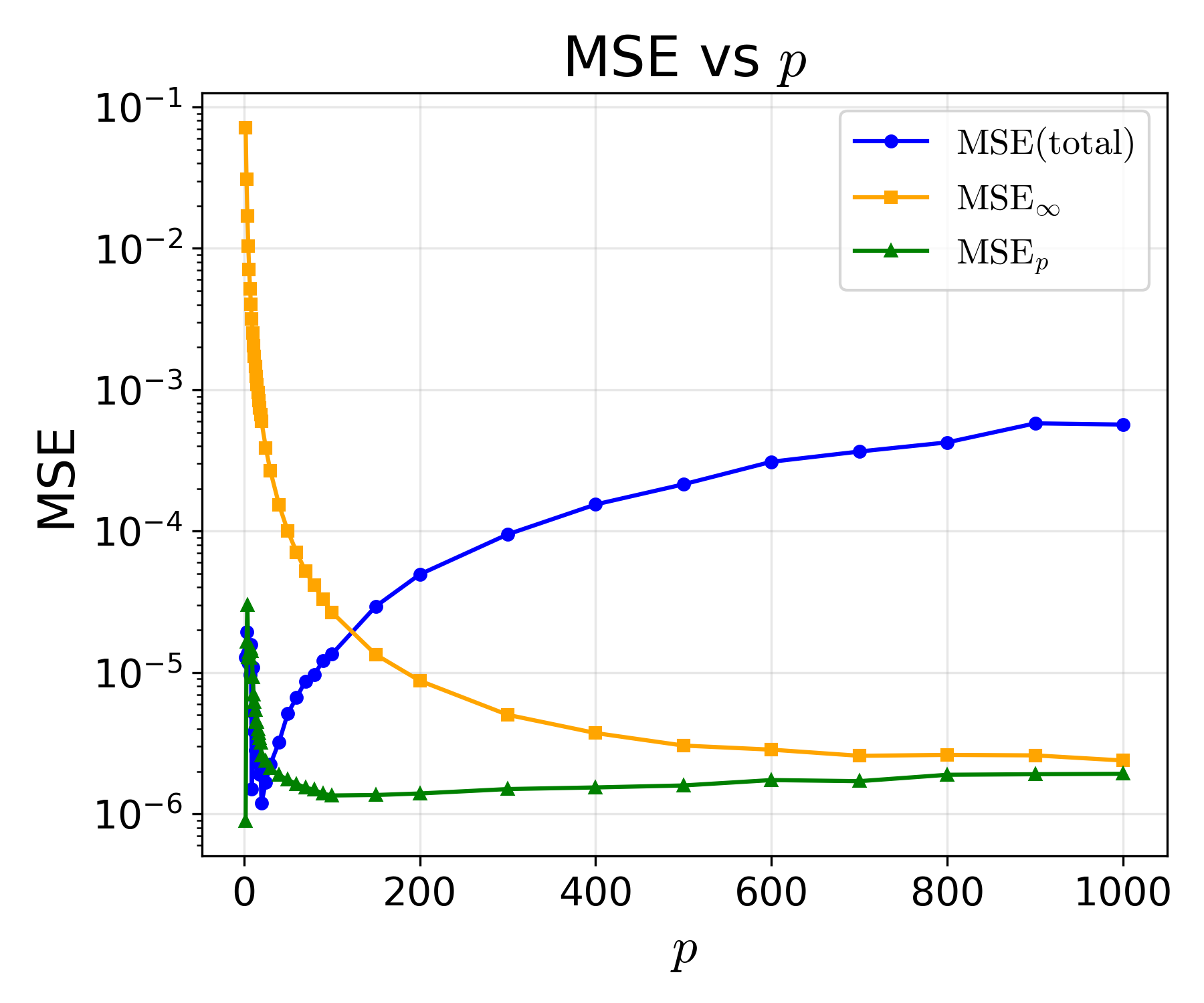}
    \caption{\small Direct finite-$p$ validation on the unit disc.}
    \label{fig:training3-disc}
\end{subfigure}

\caption{ \small 
Iterative PINN continuation results over increasing values of $p$.
In both panels, the blue curve represents the total training loss and
the orange curve represents $\mathrm{MSE}_{\infty}$.
In panel~(a), the green curve represents
$\mathrm{MSE}_{\Delta_\infty}$ for the Aronsson example.
In panel~(b), the green curve represents the direct finite-$p$ error
$\mathrm{MSE}_{p}$ against the exact solution $u_p$.
}
\label{fig:training3}
\end{figure*}

\subsubsection{Finite-$p$ validation on the unit disc}
\label{subsec:pinn-finite-p-disc}

To directly validate the iterative PINN method for finite values of $p$, let $\Omega\subset\mathbb{R}^2$ be the unit disc centered at the origin. We consider the $p$-Poisson problem
\begin{align*}
-\Delta_p u_p &= 1,
&& \text{in } \Omega,\\
u_p &= 0,
&& \text{on } \partial\Omega.
\end{align*}
A direct radial calculation gives the exact solution
\begin{align*}
u_p(\bx)
=
\frac{p-1}{p}\,2^{-1/(p-1)}
\left(
1-\|\bx\|_2^{p/(p-1)}
\right),
\qquad p>1.
\end{align*}
Moreover,
\begin{align*}
u_p(\bx)\longrightarrow
u_\infty(\bx)=1-\|\bx\|_2
\qquad\text{as }p\to\infty,
\end{align*}
where $u_\infty$ is the distance to the boundary of the unit disc.

The PINN is trained using continuation in $p$, with the network weights from
one value of $p$ used to initialize the next continuation stage. Unlike the
homogeneous Aronsson experiments, this problem has a nonzero right-hand side.
We therefore use the flux-based regularization
\begin{align*}
-\Delta_{p,\eta}u
=
-\nabla\cdot
\left[
\left(\eta^2+|\nabla u|^2\right)^{(p-2)/2}\nabla u
\right],
\qquad \eta=10^{-5},
\end{align*}
together with the gradient-norm and residual stabilization used in the
preceding PINN experiments. The exact comparison target $u_p$ remains the
solution of the original unregularized finite-$p$ problem. The complete continuation schedule and sampling configuration are reported in \ref{app:iter-schedule}.

For each saved continuation checkpoint, we report the total training loss
\begin{align*}
\mathrm{MSE}_{\mathrm{total}}
=\mathrm{MSE}_{\partial\Omega}
+\alpha\,\mathrm{MSE}_{\mathrm{PINNs}},
\end{align*}
where $\mathrm{MSE}_{\partial\Omega}$ is the training boundary loss and
$\mathrm{MSE}_{\mathrm{PINNs}}$ is the mean squared regularized $p$-Poisson
residual at the training collocation points. We additionally report the direct
finite-$p$ error $\mathrm{MSE}_{p}$ defined in
\eqref{eq:MSE_test_finite_p}, and the error $\mathrm{MSE}_{\infty}$ against the
limiting distance function. These two quantities are evaluated on the fixed
$100\times100$ test grid
restricted to the unit disc, containing $N_{\mathrm{test}}=7{,}668$ points.

\begin{table}[h]
\captionsetup{font=small}
\begin{center}
\scalebox{0.82}{
\begin{tabular}{c|ccccccc}
\hline
\hline
$p$
& total loss
& $\mathrm{MSE}_{\partial\Omega}$
& PINNs loss
& $\mathrm{MSE}_{\infty}$
&$\|u_\infty\|_{2,N_{\mathrm{test}}}$
& $\mathrm{MSE}_{p}$ 
&$\|u_p\|_{2,N_{\mathrm{test}}}$\\
\hline
$2$    & $1.287$e-05 & $1.203$e-05 & $8.403$e-04 & $7.145$e-02 & $4.090$e-01 & $8.861$e-07 & $1.446$e-01 \\
$3$    & $1.930$e-05 & $9.691$e-06 & $9.607$e-04 & $3.070$e-02 & $4.090$e-01 & $1.645$e-05 & $2.395$e-01\\
$4$    & $1.366$e-05 & $7.840$e-06 & $5.816$e-04 & $1.696$e-02 & $4.090$e-01 & $2.982$e-05 & $2.851$e-01\\
$5$    & $1.173$e-05 & $4.237$e-06 & $7.498$e-04 & $1.033$e-02 & $4.090$e-01 & $1.284$e-05 & $3.116$e-01\\
$10$   & $1.090$e-05 & $1.296$e-06 & $9.608$e-04 & $2.507$e-03 & $4.090$e-01 & $9.213$e-06 & $3.620$e-01\\
$50$   & $5.152$e-06 & $2.383$e-07 & $4.914$e-04 & $9.981$e-05 & $4.090$e-01 & $1.740$e-06 & $3.999$e-01\\
$100$  & $1.346$e-05 & $2.560$e-07 & $1.320$e-03 & $2.672$e-05 & $4.090$e-01 & $1.350$e-06 & $4.045$e-01\\
$200$  & $4.942$e-05 & $3.388$e-07 & $4.908$e-03 & $8.748$e-06 & $4.090$e-01 & $1.395$e-06 & $4.068$e-01\\
$1000$ & $5.674$e-04 & $8.495$e-07 & $5.665$e-02 & $2.385$e-06 & $4.090$e-01 & $1.922$e-06 & $4.086$e-01\\
\hline
\hline
\end{tabular}
}
\end{center}
\caption{Direct finite-$p$ validation of the iterative PINN method for the
distance-to-boundary problem on the unit disc. We report the total training
loss, the boundary loss $\mathrm{MSE}_{\partial\Omega}$, the PINNs physical
loss, the solution error $\mathrm{MSE}_{\infty}$ against the limiting distance
function $u_\infty$, and the direct solution error $\mathrm{MSE}_{p}$ against
the exact finite-$p$ solution $u_p$.}
\label{tbl:pinn-finite-p-disc}
\end{table}

Table~\ref{tbl:pinn-finite-p-disc} shows that the direct finite-$p$ error remains below $3\times10^{-5}$ for all reported values of $p$ and equals $1.922\times10^{-6}$ at $p=1000$. At the same time, $\mathrm{MSE}_{\infty}$ decreases from $7.145\times10^{-2}$ at $p=2$ to $2.385\times10^{-6}$ at $p=1000$, consistently capturing convergence toward the distance-to-boundary limit. The training PINNs loss becomes larger at the largest values of $p$, while the direct comparison with the known finite-$p$ solution remains accurate. Figure~\ref{fig:training3}(b) shows the corresponding continuation history over all trained values of $p$. $\mathrm{MSE}_{\infty}$ decreases as $p$ increases, while the direct finite-$p$ error $\mathrm{MSE}_{p}$ remains small despite the increase in the total training loss at large $p$.

\subsection{DeepONet example}

In the third set of experiments, we use DeepONet to learn from finite-$p$ solution data and 
evaluate how well the resulting predictions recover the corresponding $p\to\infty$ limiting solution.
In some cases, a PINN approximation of the limiting solution is included as an auxiliary input for $p=500$.
Furthermore, we extend the training data by including parameters describing the domain geometry, allowing the trained DeepONet to generalize across both the parameter $p$ and a family of domains.

As a reference baseline, we measure the convergence of the FEM solutions toward the infinity Laplacian limit using
\begin{align*}
\mathrm{MSE}_{\infty, p}^{\text{FEM}}
= 
\frac{1}{N_{\mathrm{test}}}
\sum_{k=1}^{N_{\mathrm{test}}}
\bigl(u_p^{\text{FEM}}(\bx_k) - u_\infty(\bx_k)\bigr)^2
\end{align*}
where $u_p^{\text{FEM}}$ is the Newton FEM solution at parameter~$p$ and
$u_\infty$ is the exact solution of the corresponding infinity Laplacian problem. This resembles~\eqref{eq:MSE_test_pinf} but with the FEM solution rather than the network prediction being compared to $ u_\infty$.
This is a convergence-to-limit baseline for the FEM solution at finite $p$, rather than an estimate of FEM discretization error.

The trained model is evaluated using the mean squared error between the
DeepONet prediction and the limiting infinity Laplacian solution given by~\eqref{eq:MSE_test_pinf}.

This error metric allows us to assess how well the DeepONet predictions
recover the asymptotic $p\to\infty$ behavior across different values
of $p$ and different domain configurations.
We emphasize that this metric is not a finite-$p$ error $\|\widehat u_p-u_p\|^2$. 
Instead, $\mathrm{MSE}_{\infty,p}$ measures the discrepancy between the DeepONet prediction at parameter $p$ and the limiting solution $u_\infty$. 
Thus, except for the direct unit-disc finite-$p$ validation reported in Table~\ref{tbl:deeponet-finite-p-disc}, the DeepONet errors assess accuracy for recovering the limiting distance profile rather than directly verifying the finite-$p$ solution operator.

\subsubsection{Distance-to-origin DeepONet example}
\label{subsec:deeponet-dist2orig}

In this experiment, we consider the distance-to-origin problem on both disc and square domains
and use the corresponding $p$-Laplacian solutions as training data for the DeepONet model.
The goal is to assess whether DeepONet can learn the parametric dependence of the solution on $p$
and recover the correct $p \to \infty$ limiting behavior.
The boundary value problem is given by
\begin{align*} 
-\Delta_{p} u_p &=  1, &\qquad \text{ in } \Omega,\\
u_p &= 0, &\qquad \text{ on } \Gamma_1 =  (0,0),\\
\frac{\partial u_p}{\partial n} &= 0, &\qquad \text{ on }  \Gamma_2 = \partial \Omega \setminus (0,0).
\end{align*}

We have the limiting behavior
\begin{align*}
\lim_{p\to\infty} u_p = u_{\infty} = \mathrm{dist}(\bx,(0,0)).
\end{align*}

For each testing configuration, the range of $p$ values used in the training data spans from
$p=5$ to $p=200$.
In addition, we include the PINNs-predicted $p=\infty$ solution as an auxiliary input by associating it
with $p=500$ in the training set.
This allows us to explicitly incorporate information about the limiting solution into the operator-learning process.

Numerical evidence~\cite{Potgieter} suggests that the limiting solution $u_\infty$ satisfies the
Eikonal equation
\begin{subequations}
\begin{alignat}{2}
\label{eq:eikonal}
    |\nabla u_\infty| &=  1, &\qquad \text{ in } \Omega,\\ 
  u_\infty &= 0, &\qquad \text{ on } \Gamma_1 ,
  \label{eq:eikonalbc}
\end{alignat}
\end{subequations}
Note that the Neumann condition vanishes in the $p \to \infty$ limit.

We evaluate the DeepONet predictions for $p$ values ranging from $5$ to $500$ and compute the mean
squared errors as an error metric. The results reported in Table~\ref{tbl:mse4} and Figure~\ref{fig:training3.1} for both the disc and square domains 
demonstrate that including the PINNs-predicted $p=\infty$ solution in the training data significantly improves
accuracy with respect to the limiting-solution metric $\mathrm{MSE}_{\infty,p}$.
In particular, when the $p=\infty$ information is included, the error remains more controlled over the augmented parameter range.
In contrast, when the $p=\infty$ solution is omitted from the training inputs, the MSE
continues to increase for $p>200$, which coincides with the upper limit of the available training data.

These observations indicate that incorporating information about the limiting solution can substantially improve DeepONet predictions in the large-$p$ regime. 
Since the limiting solution is included as a training label at the surrogate value $p=500$, 
predictions for $200<p<500$ should be interpreted as interpolation within an augmented parameter range,
rather than extrapolation beyond the available training data.

\begin{table}[h]
\begin{center}
\scalebox{0.9}{
\begin{tabular}{c|ccc|ccc}
\hline
\hline
&&{Disc domain} && &{Square domain}&\\
 $p$ &  $\text{MSE}_{\infty,p}^{\text{FEM}}$ & \begin{tabular}{@{}c@{}}$\text{MSE}_{\infty,p}$ \\ (no limit reference)\end{tabular} & $\text{MSE}_{\infty,p}$ & $\text{MSE}_{\infty,p}^{\text{FEM}}$ & \begin{tabular}{@{}c@{}}$\text{MSE}_{\infty,p}$ \\ (no limit reference)\end{tabular} & $\text{MSE}_{\infty,p}$\\
\hline
$5$ & $1.575$e-02 & $1.289$e-02 & $1.288$e-02 & $2.377$e-02 & $2.337$e-02 & $2.328$e-02\\
$50$ & $4.022$e-05 & $4.009$e-05 & $4.172$e-05 & $6.865$e-05 & $6.628$e-05 & $6.834$e-05\\
$100$ & $5.610$e-06 & $5.250$e-06 & $6.804$e-06 & $1.180$e-05 & $1.177$e-05 & $1.193$e-05 \\
$200$ & $1.136$e-06 & $3.251$e-06 & $5.656$e-06 & $1.523$e-06 & $2.126$e-06 & $2.216$e-06\\
$300$ & $-$ & $8.135$e-05 & $5.098$e-05 & $-$ & $2.603$e-05 & $6.411$e-06\\
$400$ & $-$ & $9.493$e-04 & $5.605$e-05 & $-$ & $1.861$e-04 & $5.638$e-06\\
$500$ & $-$ & $3.549$e-03 & $2.872$e-06 & $-$ & $5.527$e-04 & $7.148$e-07\\
\hline
\hline
\end{tabular}
}
\end{center}
\caption{\small Error evaluation for the distance to origin DeepONet example~\ref{subsec:deeponet-dist2orig} on the disc and square domains. For each $p$ value, we show the FEM convergence-to-limit baseline $\text{MSE}_{\infty,p}^{\text{FEM}}$. We also show the DeepONet error $\text{MSE}_{\infty,p}$ which compares against the exact $p \to \infty$ limiting solution both without including $p=\infty$ training data and with PINNs-predicted $p=\infty$ data added as a surrogate for $p=500$.}\label{tbl:mse4}
\end{table}

\begin{figure*}[h]
\centering
\begin{subfigure}[b]{0.4\textwidth}
\centering
\includegraphics[width=\textwidth]{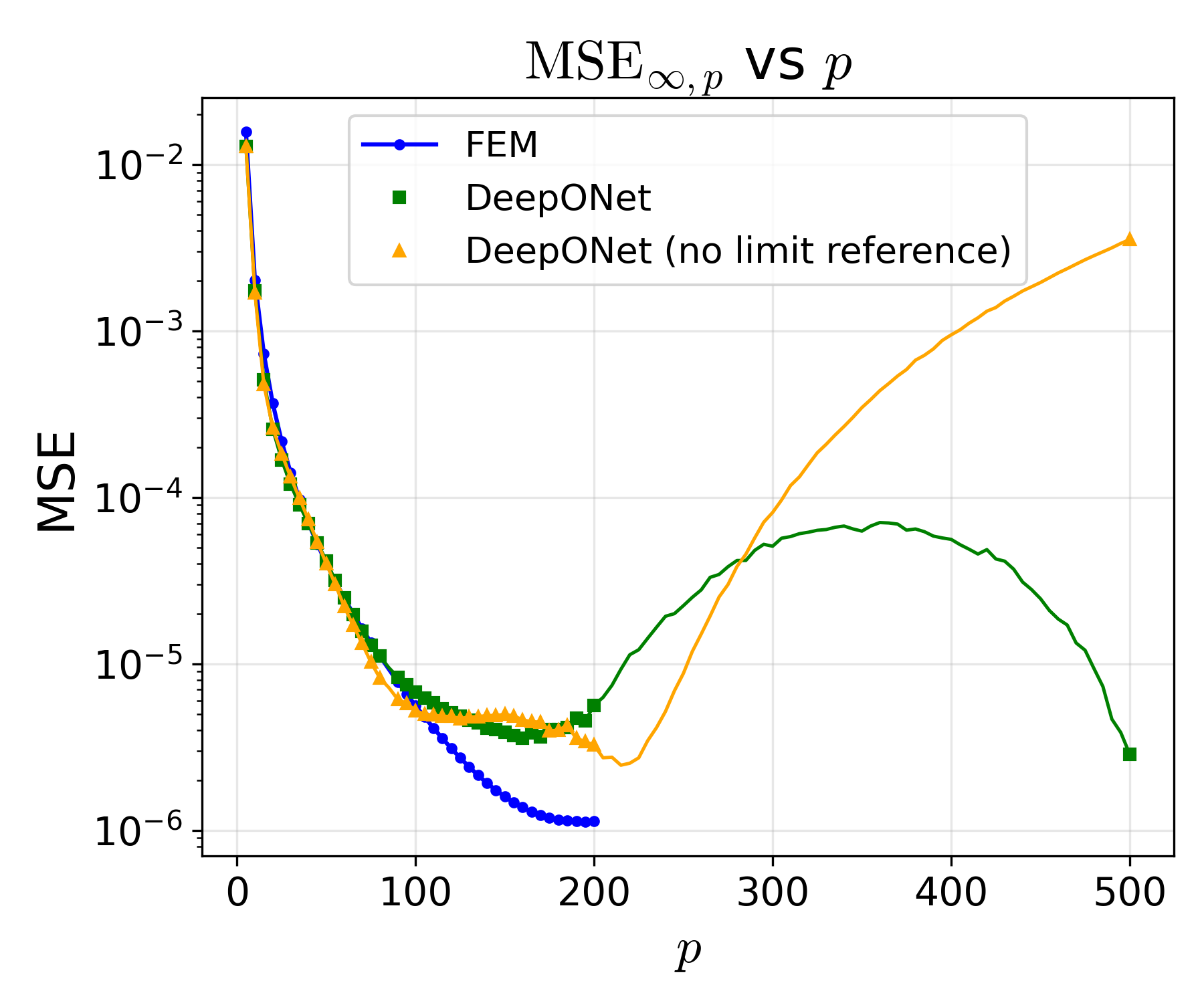}
\caption[]%
{{\small unit disc}}    
\end{subfigure}
\begin{subfigure}[b]{0.4\textwidth}
\centering
\includegraphics[width=\textwidth]{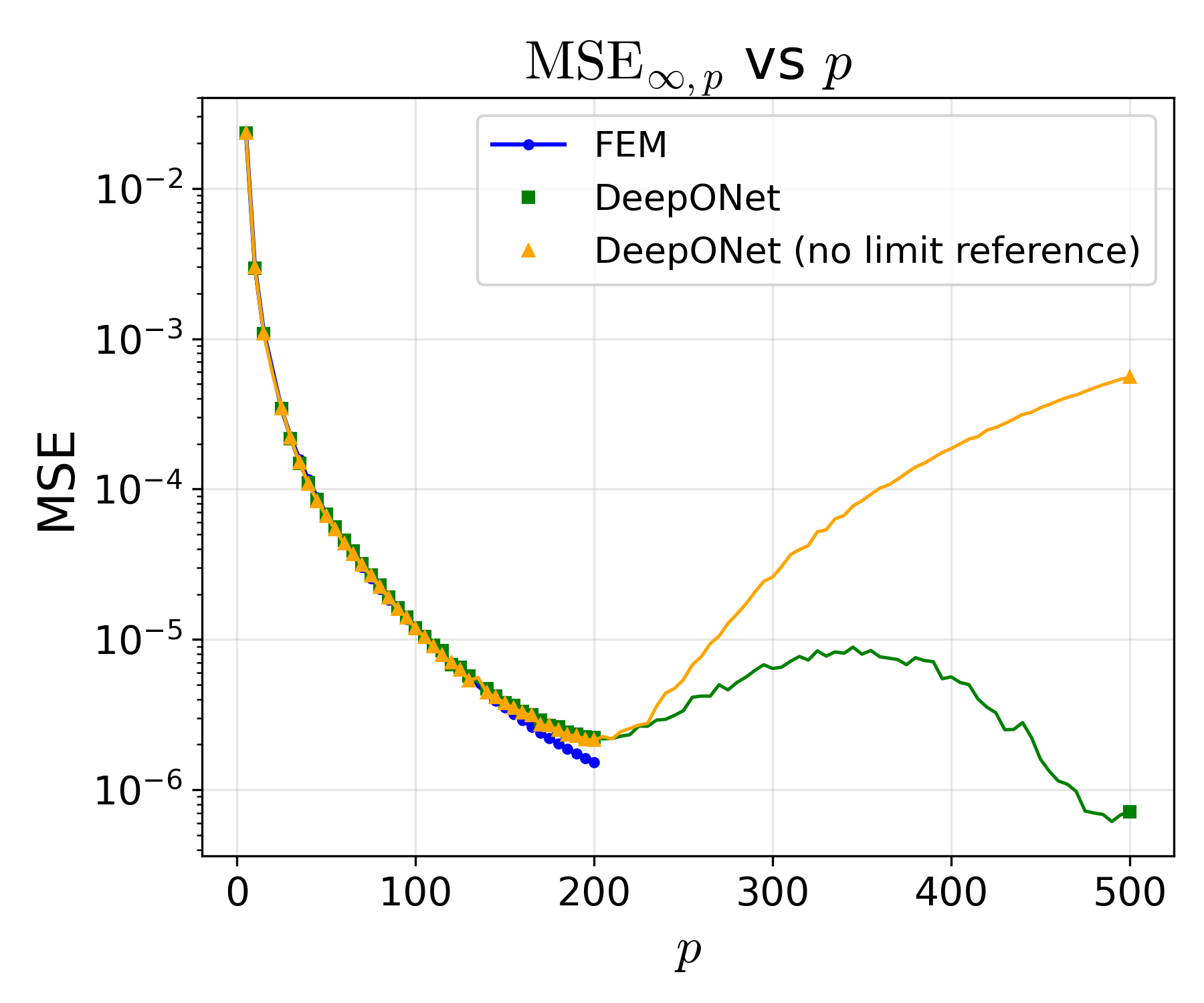}
\caption[]%
{{\small square}}    
\end{subfigure}
\caption[]%
{{\small Distance to origin DeepONet example~\ref{subsec:deeponet-dist2orig} (a) on unit disc domain (b) on square domain. 
The blue curves show the FEM convergence-to-limit baseline $\text{MSE}_{\infty,p}^{\text{FEM}}$, 
the orange curves show the DeepONet error $\text{MSE}_{\infty,p}$ against the exact $p \to \infty$ limiting solution without including $p=\infty$ training data, 
and the green curves show the DeepONet error $\text{MSE}_{\infty,p}$ with the PINNs-predicted solution at $p=\infty$ added as a surrogate for $p=500$. 
All DeepONet errors in this figure are measured against the limiting solution $u_\infty$, not against the finite-$p$ FEM solution $u_p$.
}} 
\label{fig:training3.1}
\end{figure*}

\subsubsection{Distance-to-boundary DeepONet 2D and 3D examples}
\label{subsec:deeponet-dist2bdry}

Bhattacharya et al.~\cite{bhattacharya} show that solutions of the $p$-Laplacian can be used to
approximate the distance-to-boundary function in the large $p$ limit. Unfortunately, large values of $p$ lead to increasingly ill-conditioned problems and are therefore
numerically challenging to solve.
The boundary value problem is given by
\begin{align*}
-\Delta_{p} u_p &=  1, &\qquad \text{ in } \Omega,\\
u_p &= 0, &\qquad \text{ on } \partial\Omega. 
\end{align*}

It is shown analytically in~\cite{bhattacharya} that
\begin{align*}
\lim_{p\to\infty} u_p = u_{\infty} = \mathrm{dist}(\bx,\partial \Omega).
\end{align*}

There exists a substantial body of work on numerical methods for approximating solutions of this
problem, including finite element and finite difference approaches~\cite{fayolle2018p,caliari2017quasi,toulopoulos2017numerical}.
In contrast to these methods, our goal is to assess whether DeepONet can learn the parametric
dependence of the solution on $p$ and accurately recover the limiting distance function without
directly solving highly ill-conditioned problems for large $p$.

To address the reliance on exact $p=\infty$ solutions as training data, we employ a PINN model to
approximate the limiting distance function by solving the corresponding Eikonal equation. 
For the distance-to-boundary problem, the large-$p$ limit is the distance function $u_\infty=\mathrm{dist}(\mathbf{x},\partial\Omega)$,
which satisfies the Eikonal problem (\ref{eq:eikonal})-(\ref{eq:eikonalbc}).

Table~\ref{tbl:mse5} demonstrates that the DeepONet model can recover the large-$p$ limiting profile with errors comparable to the FEM convergence-to-limit baseline.
Thus, the proposed approach is capable of accurately approximating the limiting distance profile on 2D geometries
and remains effective for substantially more challenging 3D domains. 
The DeepONet training time is approximately $17500$s ($4.86$h). This is higher than the Newton computation times in 2D, $8667$s ($2.41$h) for the disc and approximately $3.5$h for the ellipses. The computational gain is more pronounced in three dimensions, where the Newton computations require up to approximately 5 days. Once trained, the DeepONet model can be evaluated efficiently for multiple values of $p$ and family of geometries without repeating the training procedure.

Figure~\ref{fig:training3.2} illustrates how the errors for the 2D examples vary with $p$ when
evaluated against the true distance function, corresponding to the $p\to\infty$ limit.
In all cases, the $\mathrm{MSE}_{\infty,p}$ curves decay comparably to 
the $\mathrm{MSE}_{\infty,p}^{\mathrm{FEM}}$ curves 
within the finite-$p$ training range.
Including the PINNs-predicted solution at $p=500$ in the training data enables interpolation between
the upper end of the training range and larger values of $p$. 
However, we note that there is some non-monotonicity for the $p$ values beyond the FEM training range particularly pronounced for ellipse~2.
Here, the $\mathrm{MSE}_{\infty,p}$ curve increases, contrary to theoretical expectations.
Recall that convergence to the distance function is expected at a rate of $\mathcal{O}(1/p)$.
For sufficiently large $p$, discretization error and network approximation error are therefore
expected to dominate the asymptotic $1/p$ convergence behavior.

For the unit-disc example, the exact finite-$p$ solution is available for every $p>1$, as described in Section~\ref{subsec:pinn-finite-p-disc}. This allows us to supplement the convergence-to-limit comparison with a direct evaluation of the DeepONet prediction against $u_p$. Table~\ref{tbl:deeponet-finite-p-disc} shows finite-$p$ error remains below $1.6\times10^{-6}$ at all $p$ values. Small errors are obtained at both the finite-$p$ training inputs and unseen interpolation points, directly validating the DeepONet predictions against the corresponding exact solutions $u_p$.

Figure~\ref{fig:training3.3} shows a similar evolution of the errors with respect to $p$ for the 3D
examples when compared against the true distance function.
The most consistent behavior is observed for the spherical domain.
In the cylindrical case, the decay of $\mathrm{MSE}_{\infty,p}$ becomes more pronounced after
the FEM training range.
For the torus, $\mathrm{MSE}_{\infty,p}$ remains larger than
$\mathrm{MSE}_{\infty,p}^{\mathrm{FEM}}$ even at $p=500$.

\begin{figure*}[h]
\centering
\begin{subfigure}[b]{0.35\textwidth}
\centering
\includegraphics[width=\textwidth]{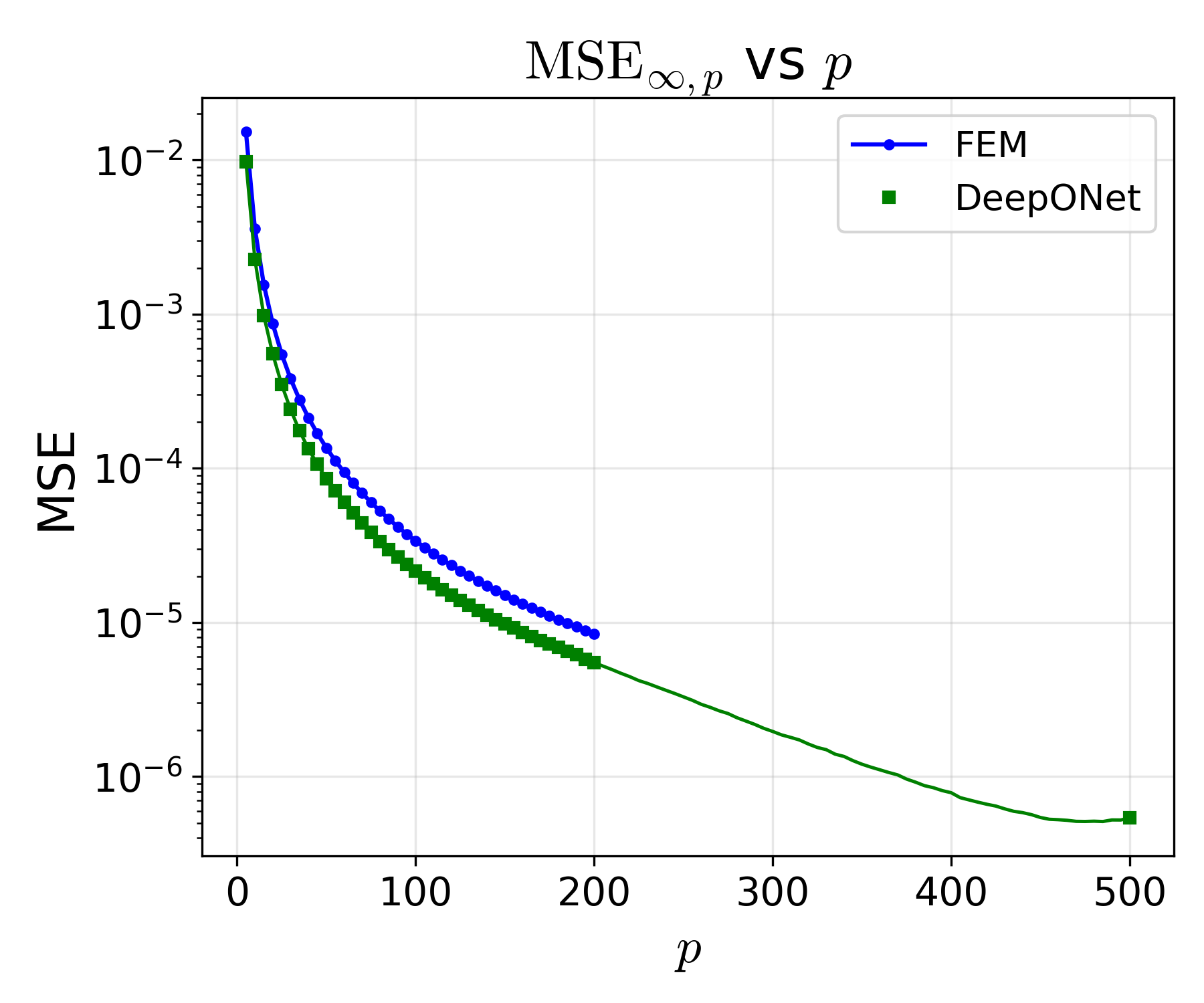}
\caption[]%
{{\small 2D disc: $x^2+y^2\leq 1$ }}    
\end{subfigure}
\begin{subfigure}[b]{0.35\textwidth}
\centering
\includegraphics[width=\textwidth]{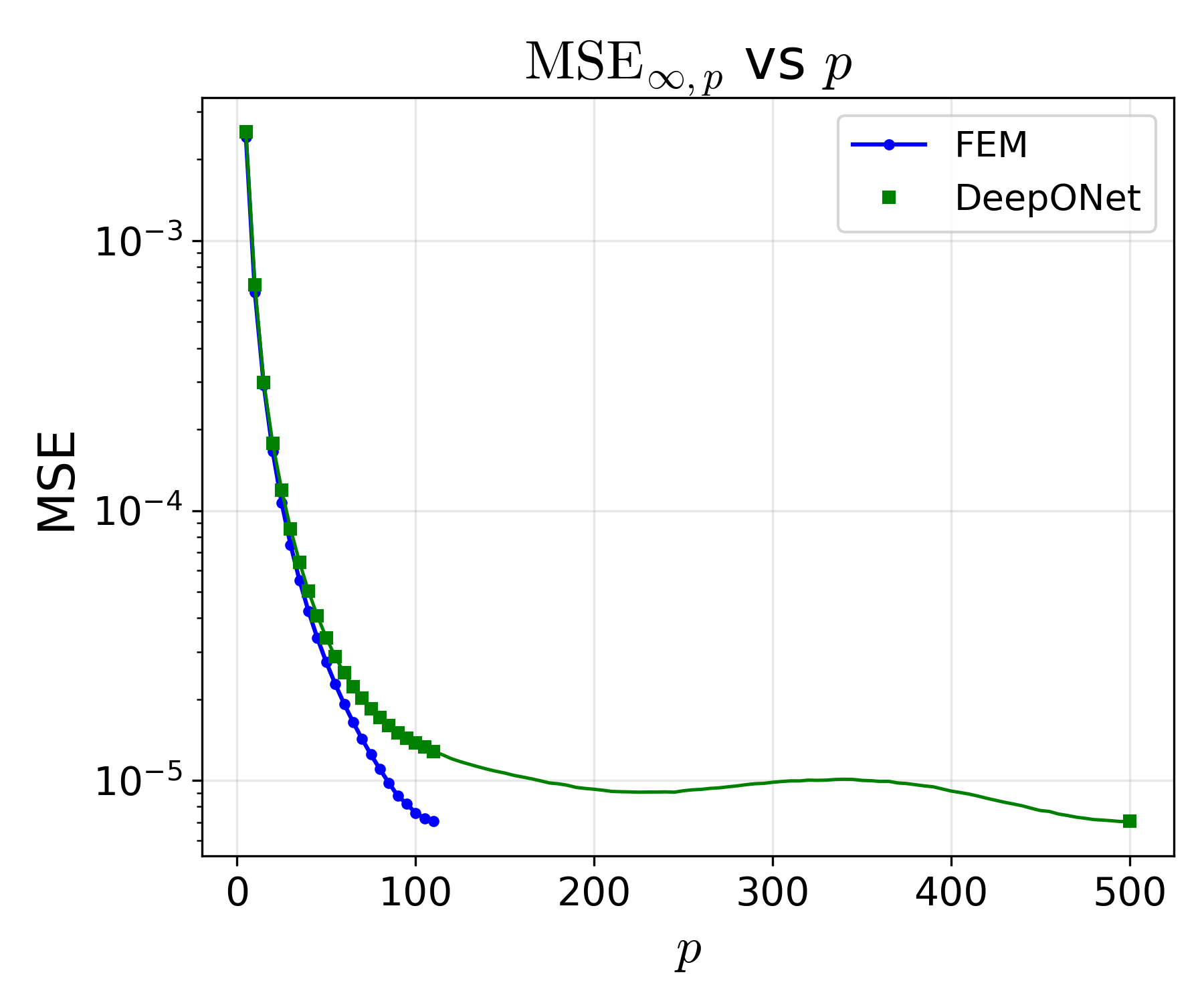}
\caption[]%
{{\small Ellipse 1: $x^2+16y^2\leq 1$ }}    
\end{subfigure}\\
\begin{subfigure}[b]{0.35\textwidth}
\centering
\includegraphics[width=\textwidth]{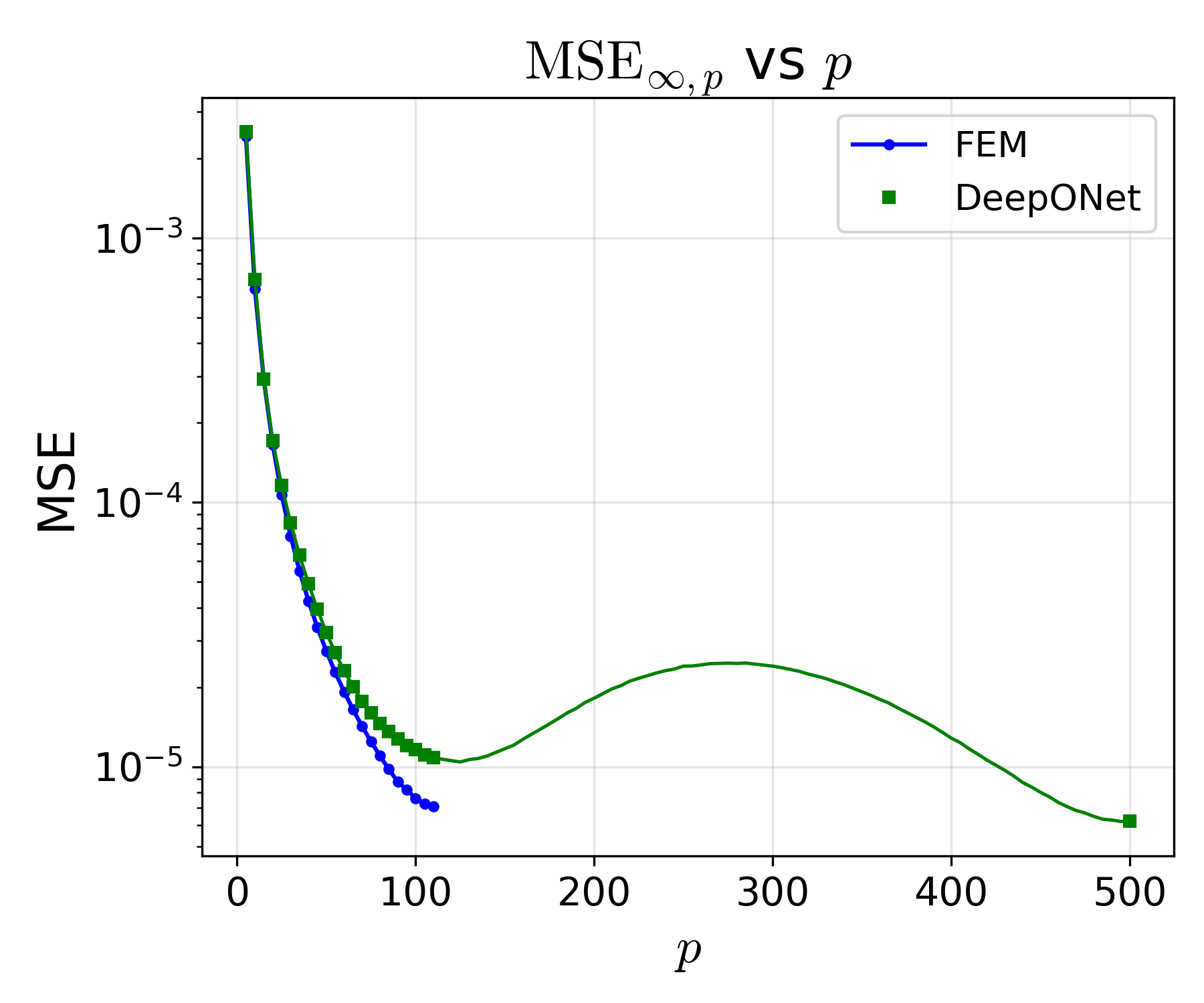}
\caption[]%
{{\small Ellipse 2: $8.5x^2+8.5y^2-15xy\leq 1$ }}    
\end{subfigure}
\begin{subfigure}[b]{0.35\textwidth}
\centering
\includegraphics[width=\textwidth]{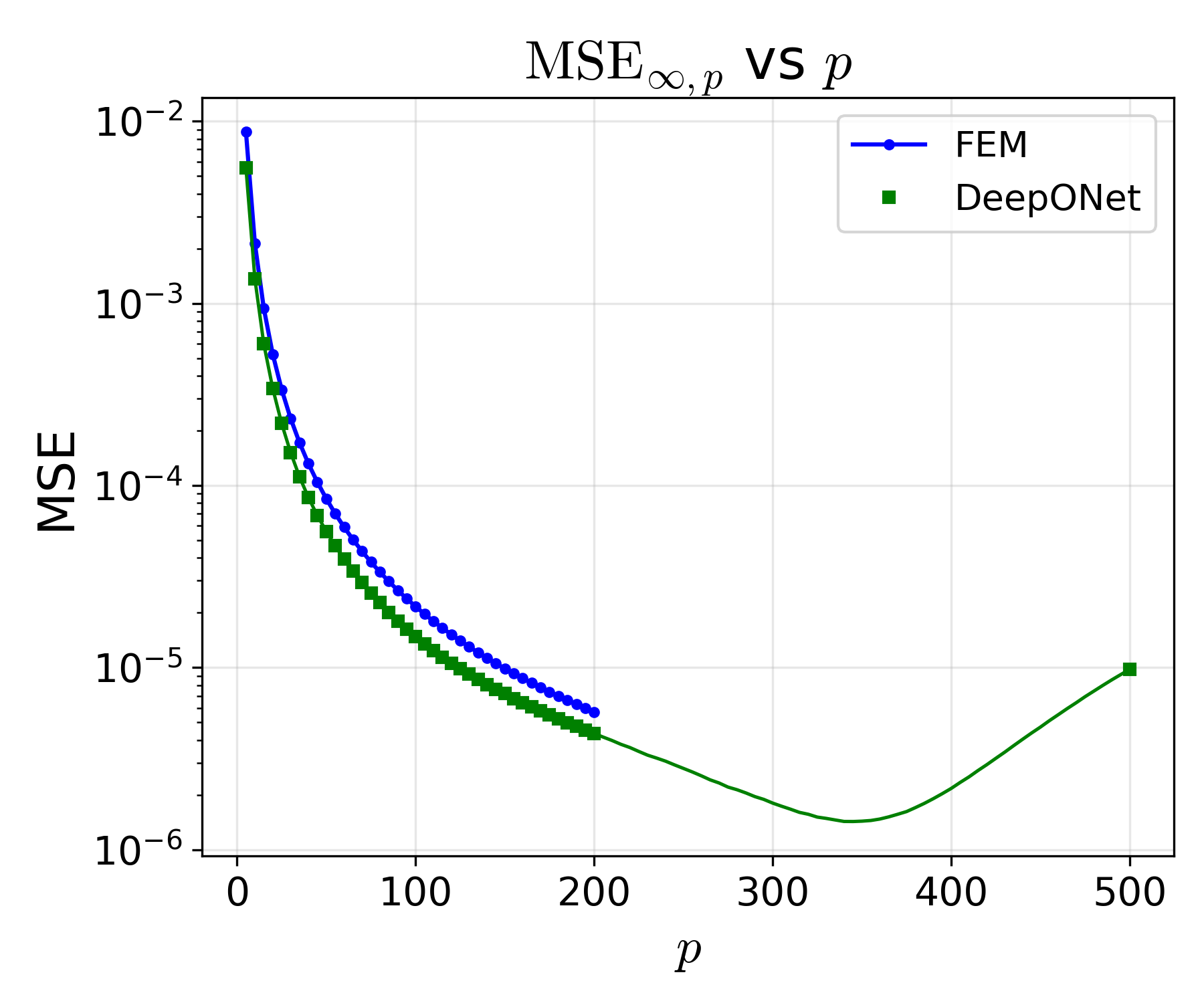}
\caption[]%
{{\small Ellipse 3: $4x^2+y^2\leq 1$ }}    
\end{subfigure}
\caption[]%
{{\small Distance to boundary DeepONet results for various 2D domains. (a)–(d) show $\text{MSE}_{\infty,p}^{\text{FEM}}$ and $\text{MSE}_{\infty,p}$ over increasing $p$ values for: (a) the unit disc, (b) an elongated ellipse, (c) a rotated ellipse, and (d) a compressed ellipse. All predictions are compared against the $p=\infty$ solution which is the true distance function.}}
\label{fig:training3.2}
\end{figure*}

\begin{figure*}[h]
\centering
\begin{subfigure}[b]{0.32\textwidth}
\centering
\includegraphics[width=\textwidth]{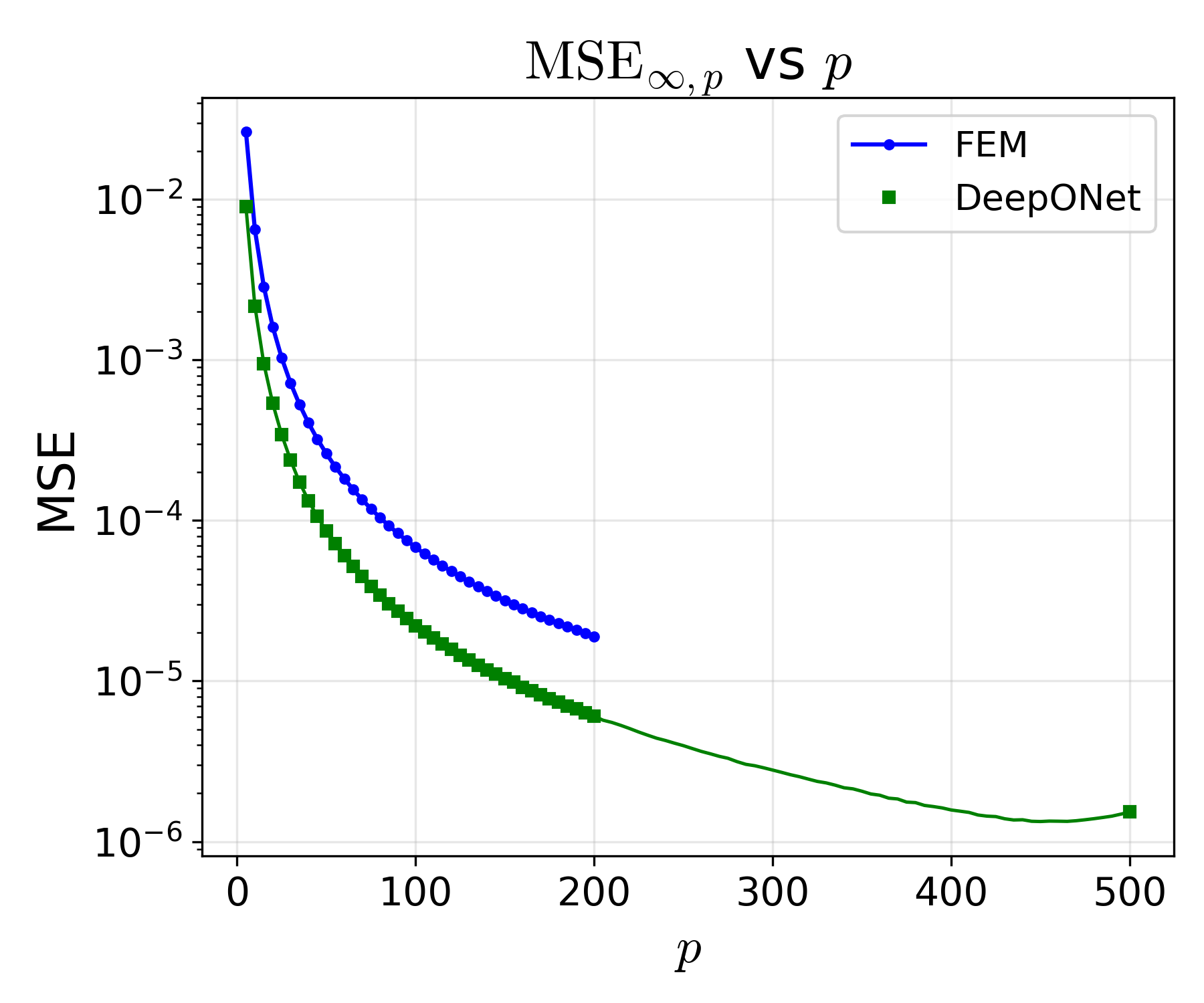}
\caption[]%
{{\small 3D ball}}    
\end{subfigure}
\begin{subfigure}[b]{0.32\textwidth}
\centering
\includegraphics[width=\textwidth]{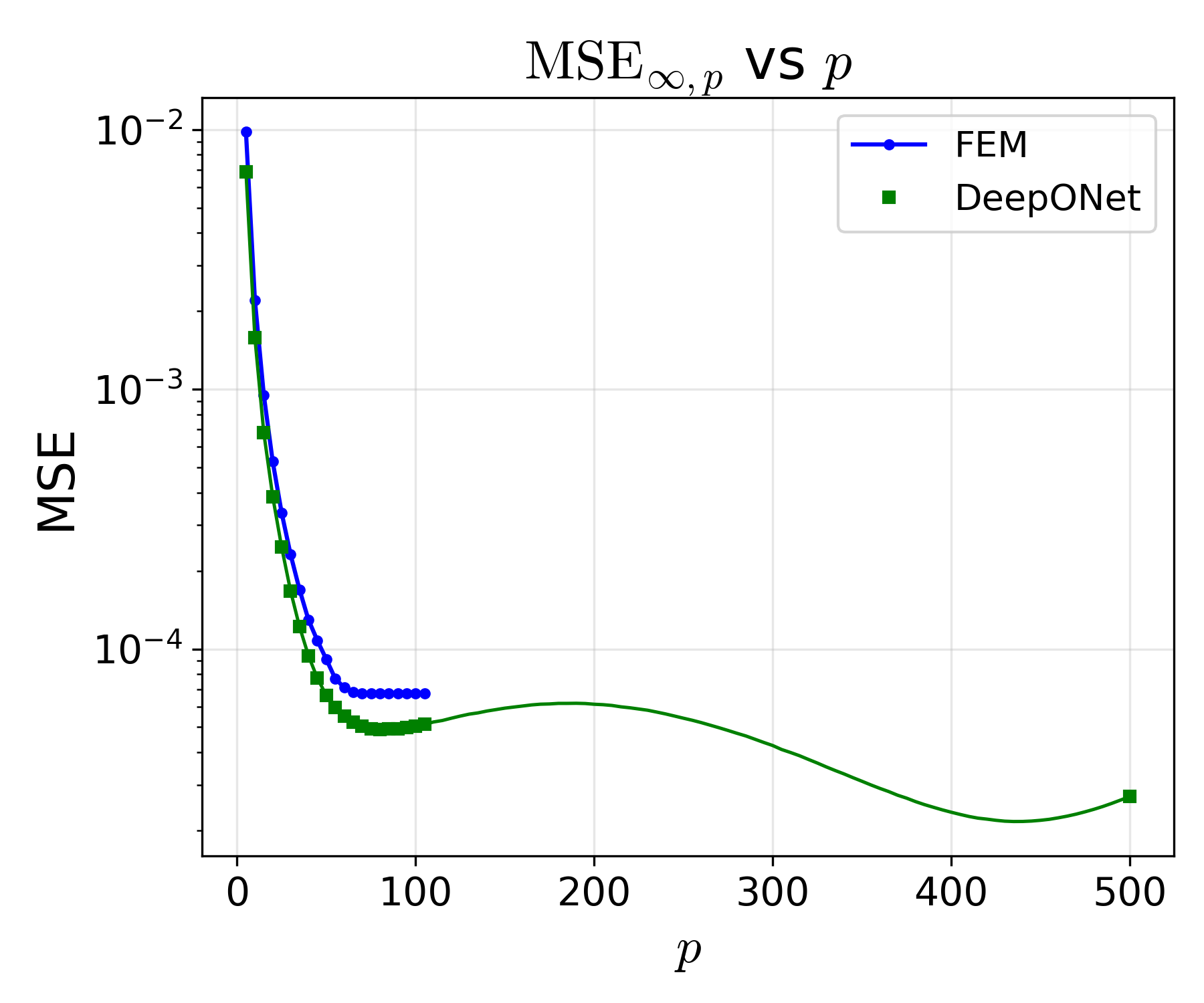}
\caption[]%
{{\small 3D cylinder}}    
\end{subfigure}
\begin{subfigure}[b]{0.32\textwidth}
\centering
\includegraphics[width=\textwidth]{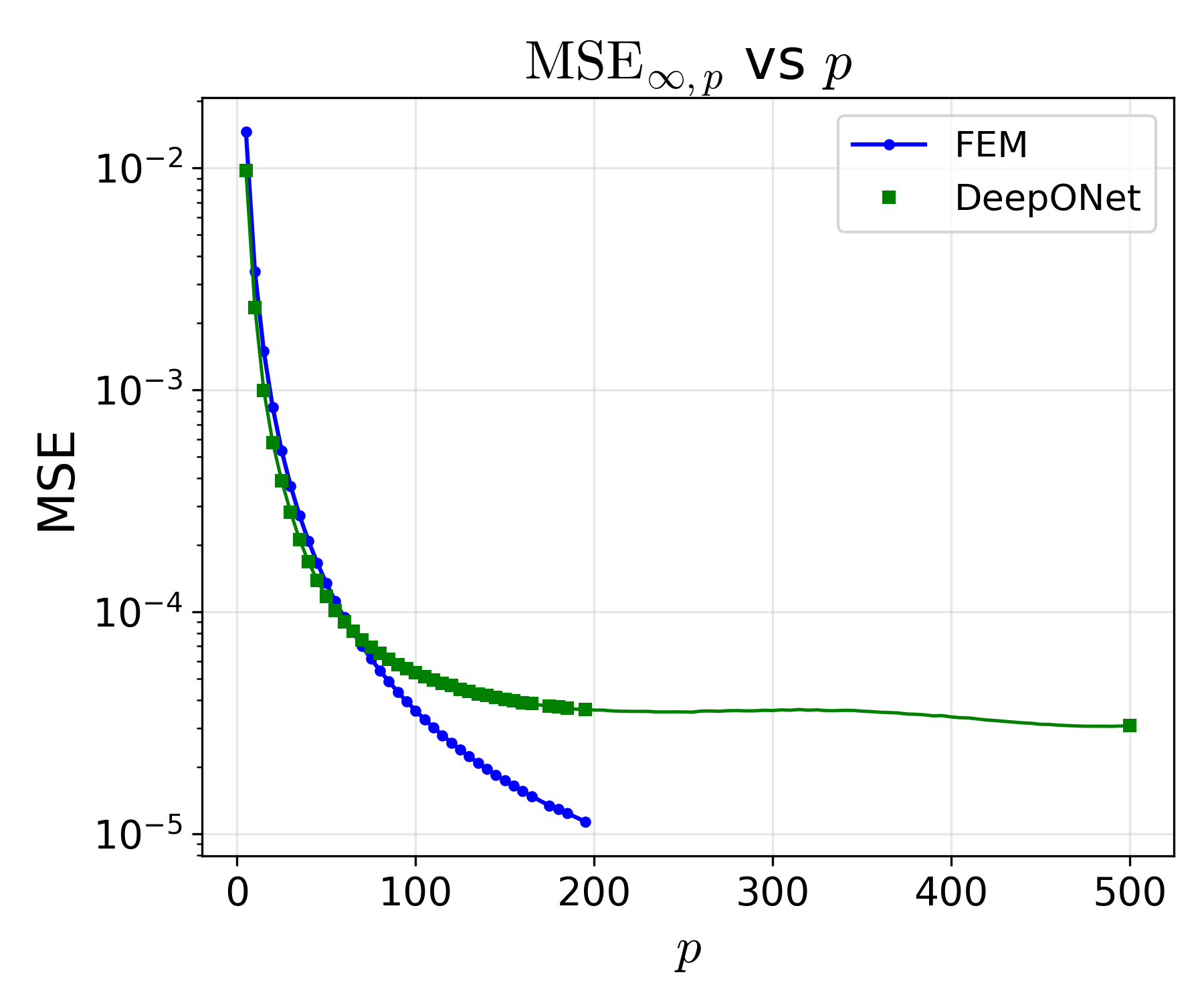}
\caption[]%
{{\small 3D torus}}    
\end{subfigure}
\caption[]%
{{\small Distance to boundary DeepONet results for various 3D domains. (a)–(c) show $\text{MSE}_{\infty,p}^{\text{FEM}}$ and $\text{MSE}_{\infty,p}$ over increasing $p$ values for: (a) the unit ball, (b) cylinder with radius $1$ and length $2$, and (c) a torus with major radius $2$ and minor radius $1$. All predictions are compared against the $p=\infty$ solution which is the true distance function.}}
\label{fig:training3.3}
\end{figure*}

\begin{table}[!h]
\begin{center}
\scalebox{0.8}{
\begin{tabular}{c|cc|cc|cc}
\hline
\hline
\multirow{3}{*}{Example} &  \multirow{3}{*}{Equation of domain} &  \multirow{3}{*}{$\|u_\infty\|_{2,N_{\mathrm{test}}}$}  & \multicolumn{2}{c|}{At largest training data $p$} &  \multicolumn{2}{c}{Runtime} \\
 & & & \multirow{2}{*}{$\text{MSE}_{\infty,p}^{\text{FEM}}$} & \multirow{2}{*}{$\text{MSE}_{\infty,p}$} & DeepONet & \multirow{2}{*}{Newton} \\
 & & & & & inference & \\
\hline
2D disc, Fig \ref{fig:training3.2}(a) & $x^2+y^2\leq 1$ & $4.085$e-01 & $8.468$e-06 & $5.248$e-06 & $2.265$e-04s & $8667$s\\
\hline
Ellipse 1, Fig \ref{fig:training3.2}(b) & $x^2+16y^2 \leq 1$ & $1.239$e-01 & $7.079$e-06 & $1.313$e-05  & $1.414$e-04s & \multirow{3}{*}{$\sim$3.5 hours}\\
Ellipse 2, Fig \ref{fig:training3.2}(c) & $8.5x^2+8.5y^2-15xy \leq 1$ & $1.237$e-01 & $7.079$e-06 & $1.066$e-05  & $1.406$e-04s & \\
Ellipse 3, Fig \ref{fig:training3.2}(d) & $4x^2+y^2 \leq 1$ & $2.398$e-01 & $5.711$e-06 & $4.435$e-06  & $1.416$e-04s & \\
\hline
3D ball, Fig \ref{fig:training3.3}(a) & $x^2+y^2+z^2 \leq 1$ & $3.167$e-01 &$1.894$e-05 & $5.995$e-06  & $2.948$e-03s & \multirow{3}{*}{$\sim$5 days} \\
3D cylinder, Fig \ref{fig:training3.3}(b) & $y^2+z^2 \leq 1, -1 \leq x \leq 1$ & $3.133$e-01 &$6.741$e-05 & $5.165$e-05  & $4.576$e-03s & \\
3D torus, Fig \ref{fig:training3.3}(c) & \begin{tabular}{@{}c@{}}$(2-\sqrt{x^2+z^2})^2+y^2\leq 1,$\\
$-3 \leq x,z \leq 3, -1 \leq y \leq 1$\end{tabular} & $4.076$e-01 &$1.136$e-05 & $3.618$e-05 & $7.401$e-04s &  \\
\hline
\hline
\end{tabular}
}
\end{center}
\caption{ \small Distance to boundary DeepONet example~\ref{subsec:deeponet-dist2bdry} results across various 2D and 3D domains. 
The second column includes the equation for each domain and the third column shows normalized discrete norm of $u_\infty$. 
The fourth and fifth columns show $\text{MSE}_{\infty,p}^{\text{FEM}}$ and $\text{MSE}_{\infty,p}$ errors respectively, at the largest training $p$ value. 
Lastly, the sixth and seventh columns show DeepONet inference times and total Newton FEM solver times. 
Both errors are measured against the limiting distance function, not against finite-$p$ solutions.
}
\label{tbl:mse5}
\end{table}

\begin{table}[h]
\captionsetup{font=small}
\begin{center}
\scalebox{0.76}{
\begin{tabular}{c|c|cc}
\hline
\hline
$p$ & Evaluation
& $\mathrm{MSE}_{p}$
& $\mathrm{MSE}_{\infty,p}$ \\
\hline
$5$   & Training          & $5.340$e-08 & $9.704$e-03 \\
$7$   & Interpolation     & $1.584$e-06 & $4.940$e-03 \\
$10$  & Training          & $4.621$e-08 & $2.269$e-03 \\
$12$  & Interpolation     & $9.675$e-08 & $1.540$e-03 \\
$25$  & Training          & $6.696$e-08 & $3.492$e-04 \\
$37$  & Interpolation     & $7.756$e-08 & $1.562$e-04 \\
$50$  & Training          & $8.080$e-08 & $8.571$e-05 \\
$77$  & Interpolation     & $8.697$e-08 & $3.613$e-05 \\
$100$ & Training          & $8.990$e-08 & $2.120$e-05 \\
$137$ & Interpolation     & $9.240$e-08 & $1.135$e-05 \\
$150$ & Training          & $9.299$e-08 & $9.505$e-06 \\
$177$ & Interpolation     & $9.433$e-08 & $6.820$e-06 \\
$200$ & Training          & $9.758$e-08 & $5.248$e-06 \\
$250$ & Augmented interp. & $1.299$e-07 & $3.043$e-06 \\
$300$ & Augmented interp. & $2.159$e-07 & $1.788$e-06 \\
$400$ & Augmented interp. & $5.356$e-07 & $6.456$e-07 \\
$500$ & Auxiliary input & $1.179$e-06 & $4.780$e-07 \\
\hline
\hline
\end{tabular}
}
\end{center}
\caption{Finite-$p$ validation of the unit-disc DeepONet model. The error $\mathrm{MSE}_{p}$ compares the DeepONet
prediction with the exact finite-$p$ solution $u_p$, while
$\mathrm{MSE}_{\infty,p}$ compares it with the limiting
solution $u_\infty$. }
\label{tbl:deeponet-finite-p-disc}
\end{table}

\subsubsection{Distance-to-boundary DeepONet over family of ellipses}

In this experiment, we incorporate geometric parameters defining an elliptical domain as inputs to the DeepONet model in order to assess its ability to generalize across a family of domains. The domain is defined by
\begin{align*}
    \Omega = \left\{ (x,y) \in \mathbb{R}^2 \; {\large:}\;
    \frac{(x\cos\theta + y\sin\theta)^2}{a^2}
    + \frac{(-x\sin\theta + y\cos\theta)^2}{b^2} < 1
    \right\}.
\end{align*}
We select the parameter ranges $a \in [0.9, 1.1]$, $b \in [0.2, 0.3]$, and
$\theta \in [0, \tfrac{\pi}{2}]$.
The rotation angle~$\theta$ is sampled at $11$ evenly spaced values
($10$ rotations plus the base ellipse at $\theta=0$).
The semi-axis parameters~$a$ and~$b$ are each sampled at $10$ evenly
spaced values within their respective ranges.
The model is trained for $20$ epochs with a total training time
of approximately $6{,}454$\,s.

The resulting solutions serve as the training data for the DeepONet model.
The mean squared error (MSE) across different parameter values is shown in
Figure~\ref{fig:training3.4}.

To assess the generalization of the trained DeepONet across the parameter
space, we evaluate two aggregate error measures derived from a full
three-dimensional sweep over $(\theta, a, b)$ at $p = 500$.
Let $\{\mathbf{x}_n\}_{n=1}^{N}$ denote the set of $N$ test points
obtained from the FEM mesh of the reference ellipse (after applying the
validity filter and a stride of~$10$).  For each triplet
$(\theta_i, a_j, b_k)$ on a uniform grid
$\theta_i \in [0,\,\pi/2]$, $a_j \in [0.9,\,1.1]$, $b_k \in [0.2,\,0.3]$,
we compute
\begin{equation}\label{eq:mse_ijk}
  \mathrm{MSE}(\theta_i, a_j, b_k)
  = \frac{1}{N} \sum_{n=1}^{N}
    \bigl(\hat{u}(R_{\theta_i}\mathbf{x}_n,\, p{=}500,\,\theta_i,\,a_j,\,b_k)
          - u_\infty(\mathbf{x}_n;\,a_j,\,b_k)\bigr)^{2},
\end{equation}
where $\hat{u}$ is the DeepONet prediction,
$R_{\theta}$ denotes counter-clockwise rotation by~$\theta$, and
$u_\infty(\mathbf{x};\,a,\,b) = \mathrm{dist}(\mathbf{x},\,
\partial\Omega_{a,b})$ is the exact infinity-Laplacian solution on the
(unrotated) ellipse $\Omega_{a,b} = \{(x,y) : x^2/a^2 + y^2/b^2 < 1\}$.
Note that $u_\infty$ is evaluated at the \emph{original} (unrotated)
coordinates because the distance to the boundary is invariant under
simultaneous rotation of the point and the domain, i.e.\
$\mathrm{dist}(R_\theta\mathbf{x},\,\partial R_\theta\Omega_{a,b})
= \mathrm{dist}(\mathbf{x},\,\partial\Omega_{a,b})$.
The two aggregate measures are then obtained by marginalizing
over the complementary variables:
\begin{equation}\label{eq:mse_theta}
  \overline{\mathrm{MSE}}(\theta_i)
  = \frac{1}{N_a \, N_b}\sum_{j=1}^{N_a}\sum_{k=1}^{N_b}
    \mathrm{MSE}(\theta_i,\, a_j,\, b_k),
\end{equation}
\begin{equation}\label{eq:mse_ab}
  \overline{\mathrm{MSE}}(a_j,\, b_k)
  = \frac{1}{N_\theta}\sum_{i=1}^{N_\theta}
    \mathrm{MSE}(\theta_i,\, a_j,\, b_k).
\end{equation}
Equation~\eqref{eq:mse_theta} quantifies the model's ability to maintain
rotational equivariance across the full range of ellipse shapes, while
equation~\eqref{eq:mse_ab} measures the prediction accuracy over varying
axis lengths for a given $(a,b)$ pair, averaged over all orientations.

The testing configurations are not included in the training set and therefore assess the
interpolation performance of the learned operator with respect to domain geometry.
Figure~\ref{fig:training3.4}(a) shows that the
$\overline{\mathrm{MSE}}(\theta)$ evaluated at $p=500$ is generally of order $10^{-4}$ across the full $\theta$ range, 
indicating that the DeepONet model can 
accurately approximate the limiting distance profile on previously unseen ellipse geometries.
This confirms that the model is able to learn a meaningful parametric representation of the
distance-to-boundary problem across a continuous family of domains.

\begin{figure*}[h]
\centering
\begin{subfigure}[b]{0.4\textwidth}
\centering
\includegraphics[width=\textwidth]{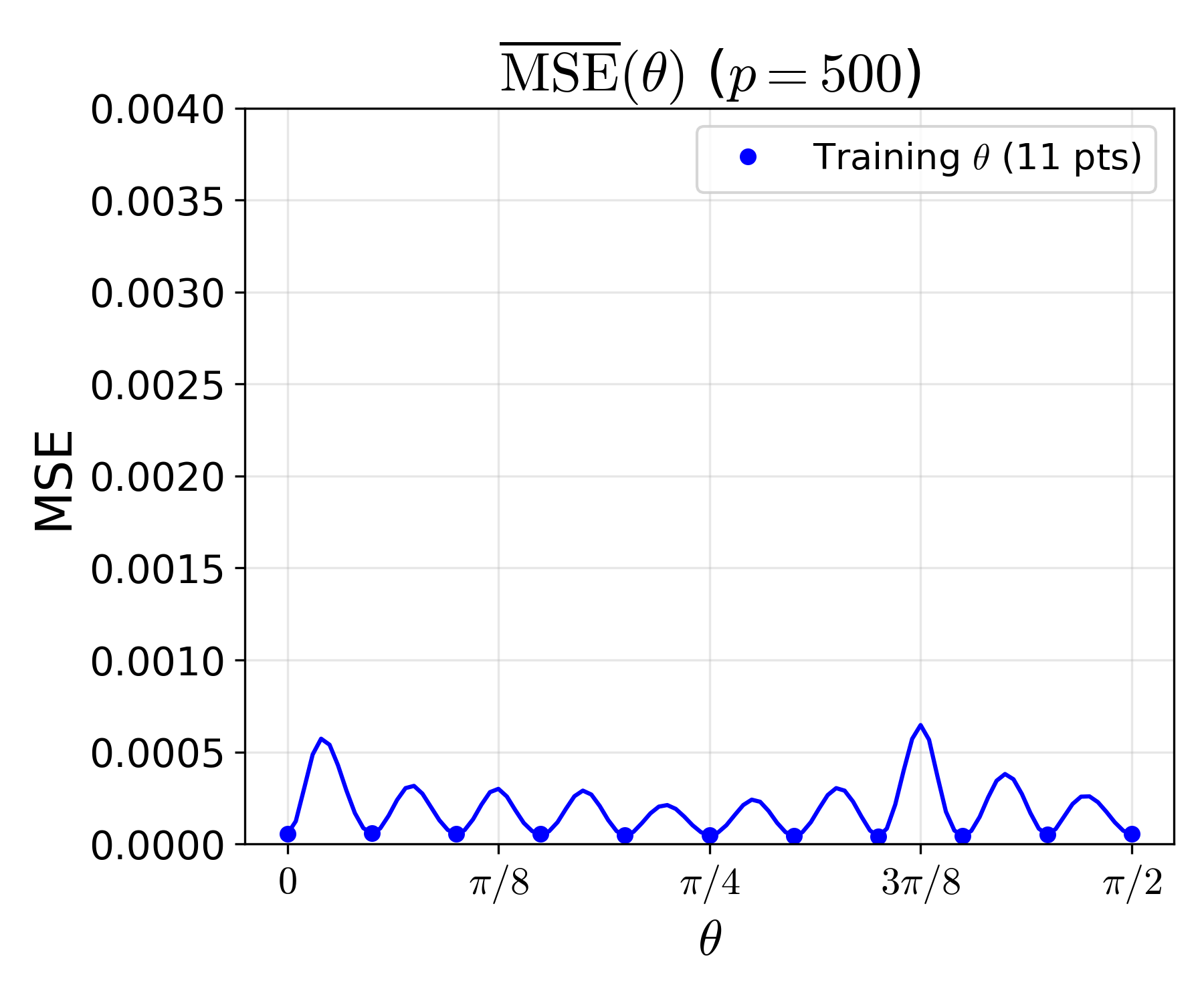}
\caption[]%
{{\small }}    
\end{subfigure}
\begin{subfigure}[b]{0.4\textwidth}  
\centering 
\includegraphics[width=\textwidth]{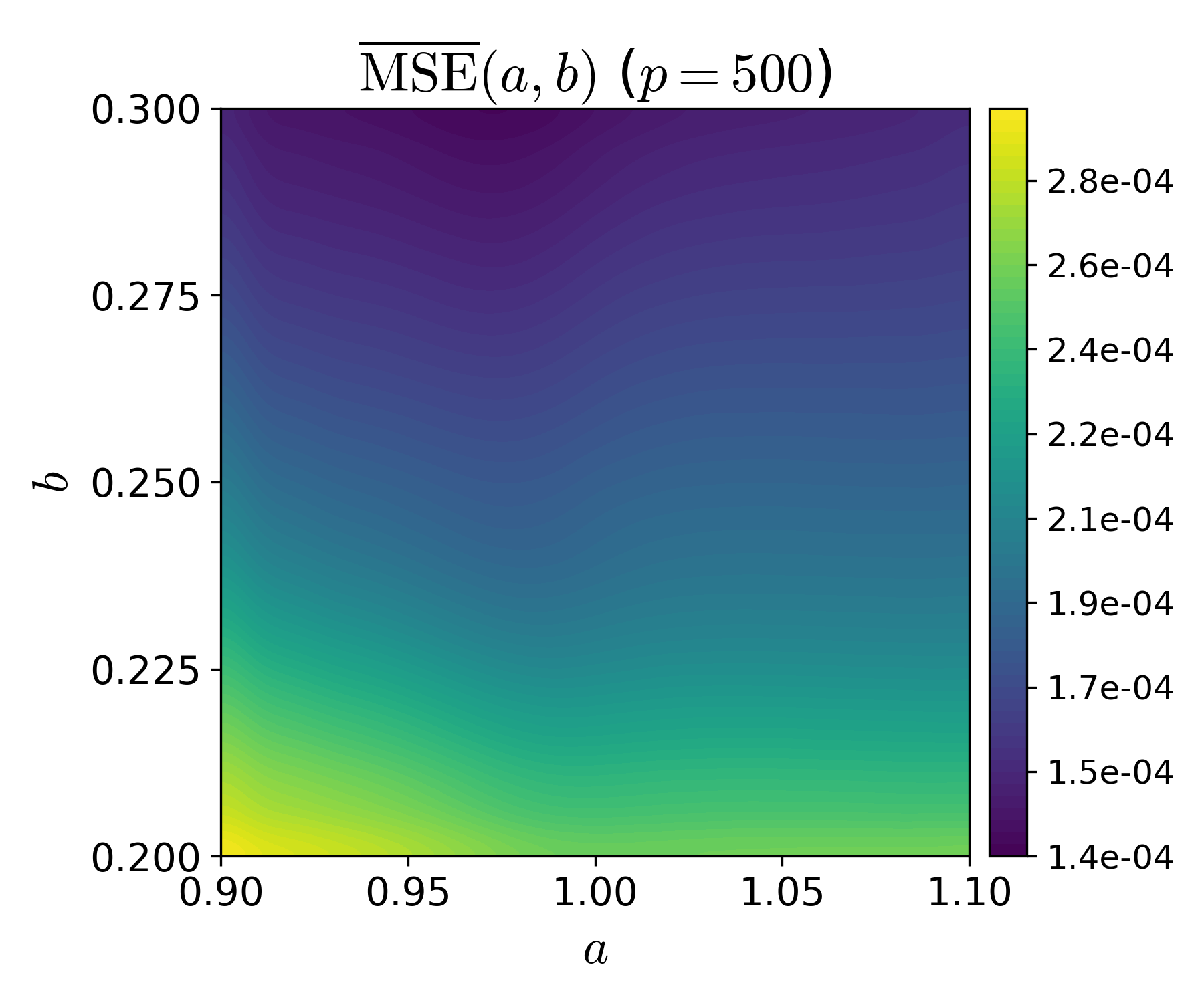}
\caption[]%
{{\small }}    
\end{subfigure}
\caption[]%
{{\small Distance to boundary DeepONet for all 2D ellipse examples.
(a) $\overline{\mathrm{MSE}}(\theta)$~\eqref{eq:mse_theta} as a function
of the rotation angle~$\theta$, evaluated at $p=500$. Blue points indicate
$\theta$ values included in the training data.
(b) $\overline{\mathrm{MSE}}(a,b)$~\eqref{eq:mse_ab} over varying semi-axis
lengths~$a$ and~$b$, evaluated at $p=500$.
The DeepONet model is trained over sampled values of $(a,b,\theta)$ and tested on unseen
configurations within the parameter range.}}
\label{fig:training3.4}
\end{figure*}

Furthermore, we investigate the effect of the number of training parameter samples on the
performance of the DeepONet model.
Specifically, we repeat the experiment using $3$, $5$, $7$, and $9$ evenly spaced values of
$\theta$ within the prescribed range while keeping the remaining parameters fixed.
This experiment isolates the influence of sampling density in the parameter space on the
interpolation quality of the learned operator.

The results in Figure~\ref{fig:training3.5} show that when the number of training points is too
small, the DeepONet model tends to overfit the available data, leading to poor predictions at
intermediate values of $\theta$.
As the number of training points increases, the approximation improves and the error profile
becomes smoother across the parameter range.
These observations highlight the importance of sufficient parameter sampling when training
operator-learning models on families of geometries.

\begin{figure*}[h]
\centering
\begin{subfigure}[b]{0.24\textwidth}
\centering
\includegraphics[width=\textwidth]{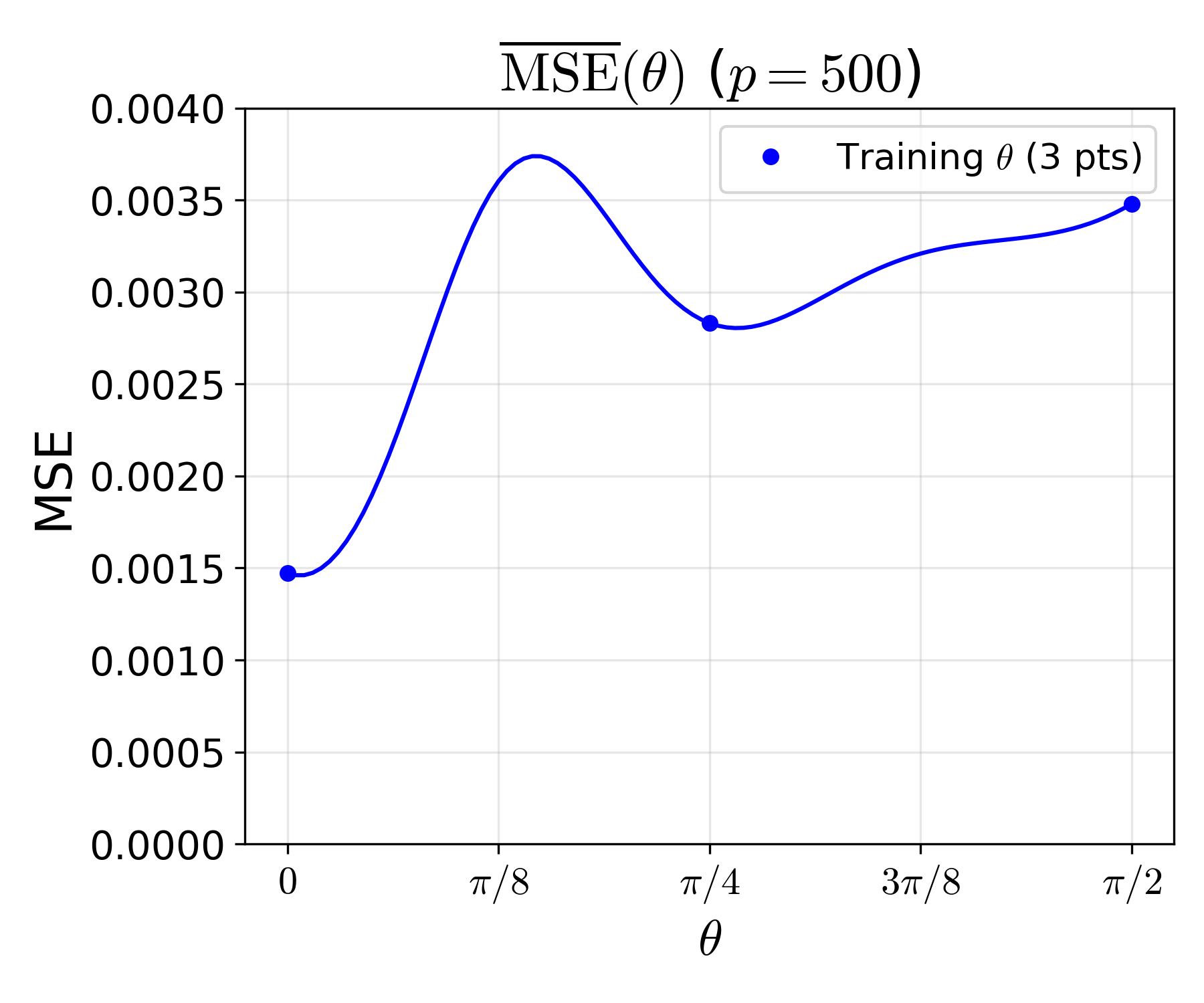}
\caption[]%
{{\small $3$ points}}    
\label{fig:ellipse_3point}
\end{subfigure}
\begin{subfigure}[b]{0.243\textwidth}  
\centering 
\includegraphics[width=\textwidth]{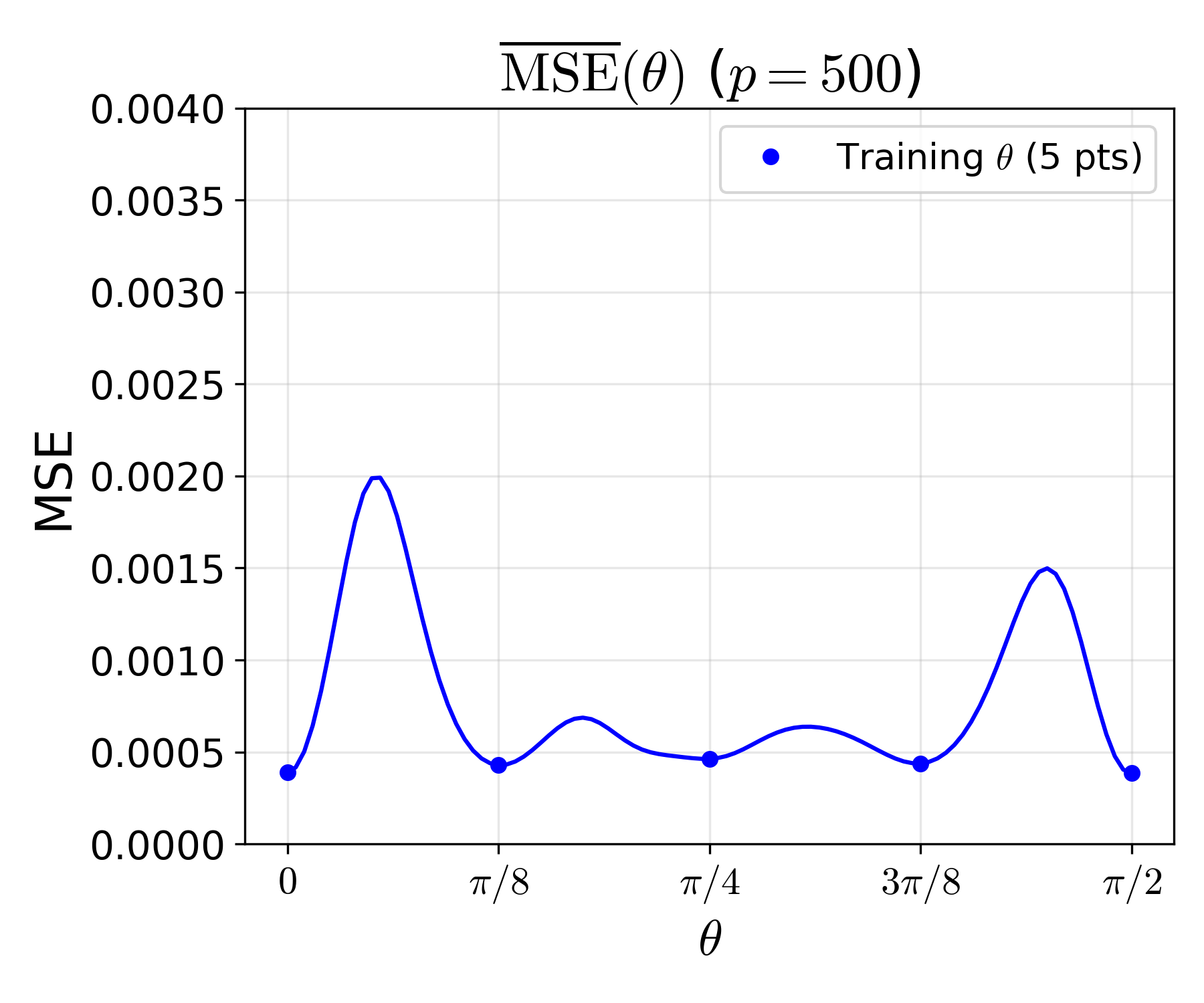}
\caption[]%
{{\small $5$ points}}    
\label{fig:ellipse_5point}
\end{subfigure}
\begin{subfigure}[b]{0.241\textwidth}
\centering
\includegraphics[width=\textwidth]{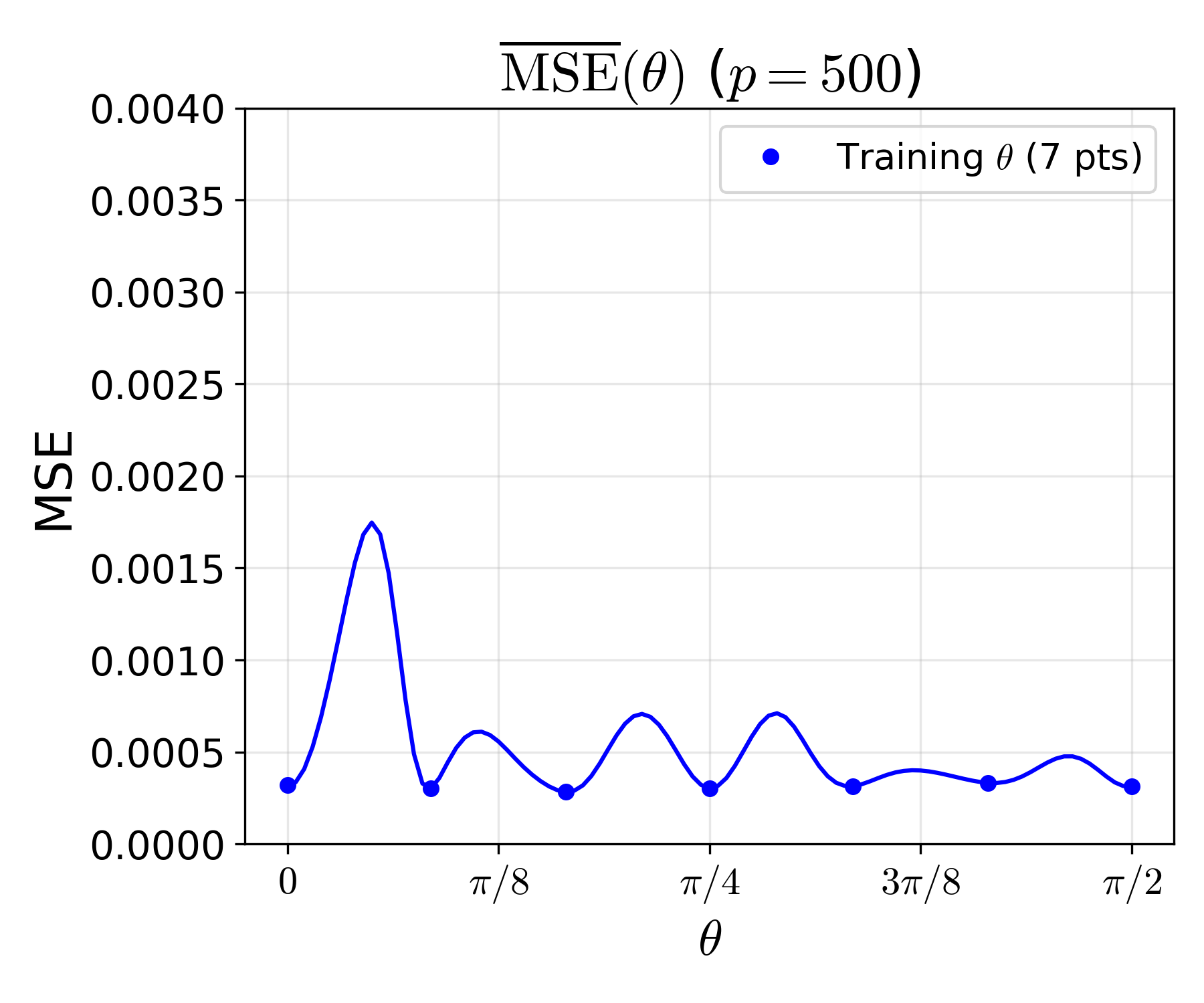}
\caption[]%
{{\small $7$ points}}    
\label{fig:ellipse_7point}
\end{subfigure}
\begin{subfigure}[b]{0.241\textwidth}  
\centering 
\includegraphics[width=\textwidth]{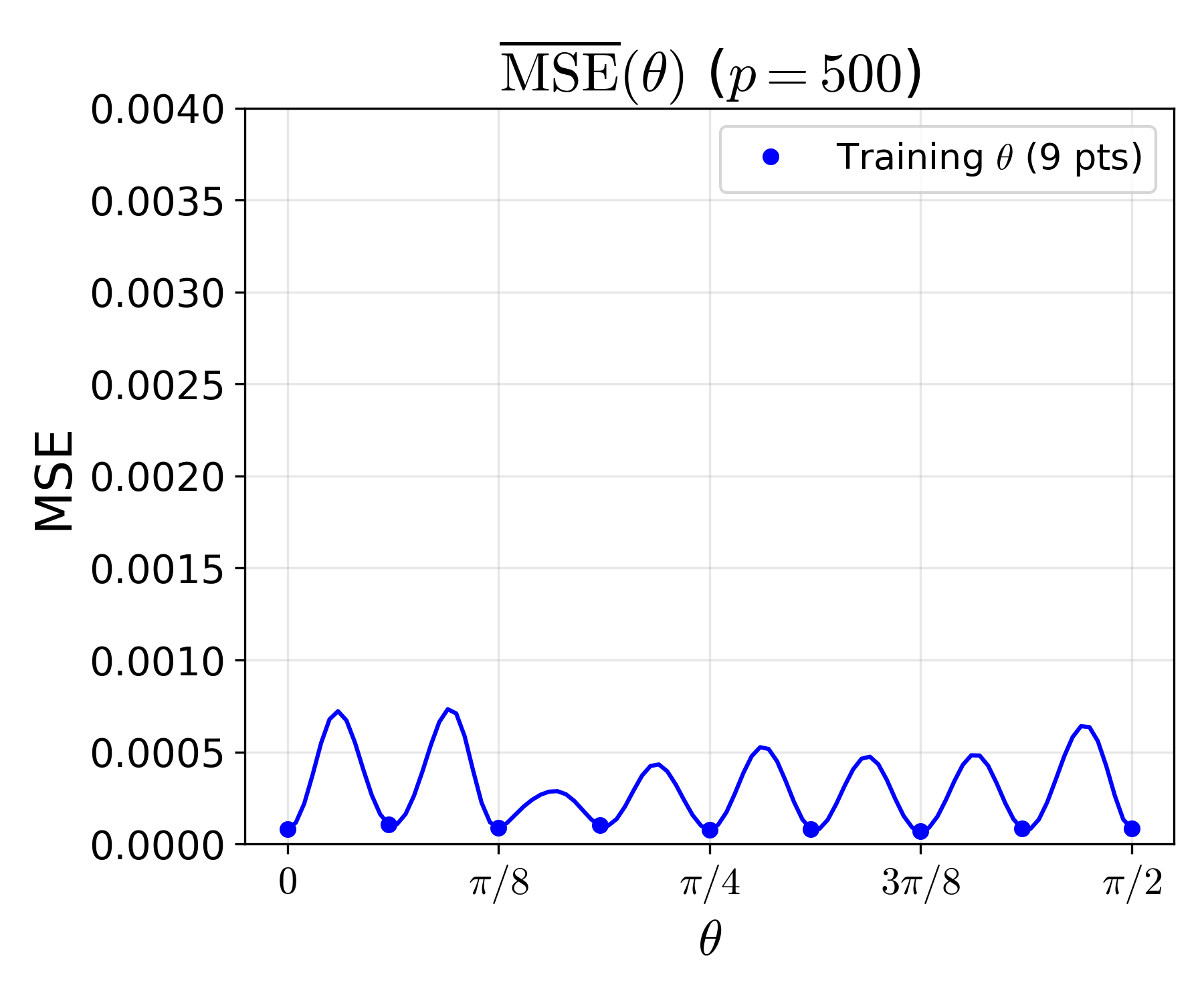}
\caption[]%
{{\small $9$ points}}    
\label{fig:ellipse_9point}
\end{subfigure}
\caption[]%
{{\small Distance to boundary DeepONet for all 2D ellipse examples.
(a)--(d) show $\overline{\mathrm{MSE}}(\theta)$~\eqref{eq:mse_theta} using
$3$, $5$, $7$, and $9$ training $\theta$~values, respectively.
Blue points indicate $\theta$ values included in the training data.}}
\label{fig:training3.5}
\end{figure*}

\section{Conclusions}
\label{section:conclusion}

Our numerical experiments demonstrate that PINNs and DeepONet can accurately recover infinity-Laplace solutions and capture the limiting behavior of $p$-Laplace solutions as $p\to\infty$. Across the finite-$p$ experiments, the neural-network predictions reproduce the limiting profiles with errors comparable to the corresponding FEM convergence-to-limit baselines, while remaining effective over a substantially wider range of $p$ values.
In particular, for the PINNs approach, we observe that an iterative training strategy in $p$,
combined with $u$-gradient norm clipping,
significantly improves stability and robustness of the optimization as $p$ increases.

DeepONet offers distinct advantages. Unlike PINNs, a single DeepONet model can incorporate
both the parameter $p$ and geometric parameters describing the domain,
allowing solutions across a family of PDEs and domains to be represented within one trained
operator.
This capability is especially well suited to distance approximation problems, where we may
wish to evaluate $p$-Poisson distances via the $p$-Laplacian for many values of $p$ and across multiple geometries.
Additionally, we find that explicitly incorporating information from the $p \to \infty$ limiting
problem can improve performance in the large $p$ regime,
leading to more reliable interpolation/extrapolation in the $p$ parameter space beyond the range of available
finite $p$ training data.

At the same time, PINNs retain the important practical advantage that precomputed solution data in the interior of the computational domain is not required and
can be trained directly from the governing equations and boundary conditions alone.
This makes PINNs particularly attractive in settings where generating high-quality training data is expensive or infeasible.

The success of the neural network solvers provides exciting avenues for future studies.  One natural extension is the transition from Euclidean domains to surface formulations involving
intrinsic differential operators, which would enable the approximation of $p$-Poisson geodesic
distances as studied in~\cite{Potgieter}.
Additional applications include image processing~\cite{Chen2006}, elastoplastic torsion~\cite{AlvarezFlores}, and non-Newtonian fluid models~\cite{Ruzicka},
where nonlinear and degenerate elliptic operators pose significant challenges for classical
numerical solvers.

\section*{Code and data availability}
The code, data-generation scripts, plotting scripts, and experiment configuration files used in this work are available at
\url{https://github.com/hannah-potgieter/infinity-and-p-Laplace-solvers}.
The repository includes fixed random-seed settings for the neural-network experiments so that the reported runs and figures can be reproduced from the same scripts and input data.

\section*{Funding/Acknowledgments}
We acknowledge the support of the Natural Sciences and Engineering Research Council of Canada (NSERC), RGPIN-2022-03302.

\section*{Author contributions}

\textbf{Tak Shing Au Yeung:} Writing -- original draft, Writing -- review \& editing, Software, Formal analysis, Visualization.

\noindent \textbf{Ka Chun Cheung:} Writing -- review \& editing, Conceptualization, Methodology.

\noindent \textbf{Hannah Potgieter:} Writing -- original draft, Writing -- review \& editing, Software, Formal analysis, Data curation.

\noindent \textbf{Steven J. Ruuth:} Writing -- review \& editing, Conceptualization, Methodology, Supervision.

\noindent \textbf{Simon See:} Writing -- review \& editing, Conceptualization.

\section*{Declaration of competing interests}
The authors declare that they have no known competing financial interests or personal relationships that could have appeared to influence the work reported in this paper.

\section*{Declaration of generative AI and AI-assisted technologies in the manuscript preparation process}
During the preparation of this work, the authors used ChatGPT (OpenAI) and Claude (Anthropic) to assist with language editing, including improvements to grammar and phrasing, and to assist in checking mathematical calculations. After using these tools/services, the authors reviewed and edited the content as needed and take full responsibility for the content of the published article.

\bibliographystyle{elsarticle-num-names}
\bibliography{refs}

\appendix

\section{Training hyperparameters}
\label{app:hyperparams}

Tables~\ref{tbl:hyperparams} and~\ref{tbl:deeponet-hyperparams} list the training hyperparameters used for the PINN experiments in Sections~\ref{subsec:2Dinflap-dbc}--\ref{subsec:plaplacian-dbc} and the DeepONet experiments in Sections~\ref{subsec:deeponet-dist2orig}--\ref{subsec:deeponet-dist2bdry} respectively.

\begin{table*}[h]
  \begin{minipage}[t]{0.49\textwidth}
  \centering
  \scalebox{0.78}{
  \begin{tabular}{l|l}
  \hline\hline
  Hyperparameter & Value \\
  \hline
  Optimizer & Adam \\
  Learning rate & $10^{-3}$ \\
  LR schedule & Cosine, $10^{-3} \to 10^{-5}$ \\
  Epochs & $100$ \\
  Validation & $20\%$ split\\
  Grad clipping & max\_norm $= 1.0$ \\
  Batch sizes & BC: $400$,\, PDE: $1000$ \\
  Initialization & Xavier normal, zero biases \\
  Hardware & Xeon Gold 5420+, RTX 5880 Ada \\
  \hline\hline
  \end{tabular}
  }
  \caption{\small PINNs training hyperparameters.}
  \label{tbl:hyperparams}
  \end{minipage}%
  \hfill
  \begin{minipage}[t]{0.49\textwidth}
  \centering
  \scalebox{0.78}{
  \begin{tabular}{l|l}
  \hline\hline
  Hyperparameter & Value \\
  \hline
  Trunk network & $[\text{input},512,512,512,128]$, $\tanh$ \\
  Branch network & $[\text{input},128,128,128,128]$, $\tanh$ \\
  Combination & Elementwise product $\to$ Linear$(128,1)$ \\
  Optimizer & Adam, $\text{lr} = 10^{-4}$ \\
  LR schedule & Cosine, $10^{-4} \to 10^{-6}$ \\
  Epochs & $20$ \\
  Batch size & $2048$ \\
  Grad clipping & max\_norm $= 1.0$ \\
  Hardware & Xeon Gold 5420+, RTX 5880 Ada \\
  \hline\hline
  \end{tabular}
  }
  \caption{\small DeepONet training hyperparameters.}
  \label{tbl:deeponet-hyperparams}
  \end{minipage}
  \end{table*}

All PINN and DeepONet inference times reported in Section~\ref{section:expts} are measured using a batched forward pass over the complete test set. Ten warm-up passes are excluded, the elapsed time is averaged over $100$ forward passes, and CUDA is synchronized immediately before and after each timed loop. Input transfer to the GPU and postprocessing are excluded. \label{app:inference-timing}

\section{Ablation study for infinity Laplacian PINNs}
\label{app:ablation}

We perform the ablation study on three test configurations: the Arctan example and the Aronsson example on the square and disc domains.
Table~\ref{tbl:ablation} reports the best-model test MSE for each configuration.
Lower values indicate better accuracy.

\begin{table}[h]
\begin{center}
\scalebox{0.78}{
\begin{tabular}{l|ccc}
\hline\hline
& \multicolumn{3}{c}{Single-network ($4\times128$ reference)}  \\
Variant & Arctan & Aronsson (square) & Aronsson (disc)  \\
\hline
normal (baseline) & $2.42$e-07 & $5.74$e-07 & $4.77$e-07  \\
\hline
\multicolumn{4}{l}{\textit{Stabilization ablations}} \\
\hline
clip0              & $1.26$e-07 & $5.74$e-07 & $4.77$e-07  \\
out0               & $1.84$e-06 & $6.17$e-06 & $1.00$e-04  \\
eta0               & $9.47$e-06 & $4.01$e-07 & $3.14$e-07  \\
clip0\_out0        & $1.90$e-06 & $5.35$e-05 & $1.74$e-04  \\
clip0\_out0\_eta0  & $2.69$e-04 & $3.51$e-06 & $6.09$e-06  \\
\hline
\multicolumn{4}{l}{\textit{Loss balancing ablations}} \\
\hline
alpha0             & $3.06$e-04 & $4.80$e-02 & $4.06$e-02  \\
alpha\_fixed\_1e-1 & $1.27$e-06 & $1.62$e-07 & $1.47$e-07  \\
alpha\_fixed\_1e-5 & $1.67$e-06 & $7.73$e-06 & $6.62$e-06  \\
relobralo          & $2.93$e-06 & $8.85$e-01 & $1.71$e+00  \\
\hline
\multicolumn{4}{l}{\textit{LR schedule ablation}} \\
\hline
nosched            & $3.19$e-07 & $1.14$e-06 & $2.36$e-06 \\
\hline
\multicolumn{4}{l}{\textit{Network width ablations}} \\
\hline
smaller (h32)  & $1.22$e-06 & $8.01$e-07 & $1.44$e-06  \\
larger (h256) & $8.31$e-07 & $5.57$e-07 & $5.70$e-07  \\
\hline
\multicolumn{4}{l}{\textit{Network depth ablations}} \\
\hline
depth2  & $1.68$e-04 & $2.12$e-06 & $1.35$e-06  \\
depth3 & $2.20$e-06 & $7.24$e-07 & $6.04$e-07  \\
depth5 & $1.80$e-07 & $4.54$e-07 & $4.60$e-07  \\
\hline\hline
\end{tabular}
}
\end{center}
\caption{\small Ablation study: best-model test MSE for the infinity Laplacian PINNs.}
\label{tbl:ablation}
\end{table}

\paragraph{Stabilization}
The baseline uses three stabilization techniques:
$\eta$-normalization~\eqref{eq:normalized_bound}, residual clipping to
$[-10,10]$, and top-$2\%$ outlier removal. We assess the importance of these
components by disabling them individually and in combination. Disabling
outlier removal has the largest adverse effect, increasing the error by
approximately $200\times$ for the Aronsson disc example because high-residual
outliers strongly influence the PINN loss. The effect of $\eta$-normalization
is problem-dependent: it improves the Arctan result but slightly degrades the
Aronsson results. Residual clipping has little additional effect when outlier
removal is active. Disabling all three components increases the errors in all three test configurations.

\paragraph{Loss balancing}
Setting $\alpha=0$ disables the PDE loss and causes the errors to increase
substantially in all cases because the boundary loss alone does not enforce
the governing equation. We also test a fixed high weight
($\alpha=10^{-1}$) and a fixed low weight ($\alpha=10^{-5}$). The high weight
improves some cases but degrades the Arctan result, whereas the low weight
degrades all cases. ReLoBRaLo \cite{Bischof_2025} produces substantially larger errors for all three test configurations.

\paragraph{LR schedule}
Disabling cosine annealing (\texttt{nosched}) causes a slight degradation,
although its effect is smaller than that of the other ablations.

\paragraph{Width and depth}
Network width has a limited effect: a width of $32$ performs similarly to
widths of $128$ and $256$ in most cases. Increasing the network depth improves
performance for the smooth examples, but the four- and five-layer networks
perform similarly.

\section{Ablation study for simple $p$-Laplacian PINNs}
\label{app:ablation-plaplace}

We test four stabilization techniques for simple $p$-Laplacian PINNs at $p=10$ and $p=200$, including residual clipping (\texttt{clip0}), gradient norm clamping (\texttt{gradclip0}), top-$2\%$ outlier removal (\texttt{out0}), and $\eta$-regularization (\texttt{eta0}).
Table~\ref{tbl:ablation-plaplace} reports the best-model $\text{MSE}_\infty$ for each configuration.

\begin{table}[h]
\captionsetup{font=small}
\begin{center}
\scalebox{0.78}{
\begin{tabular}{l|c|c}
\hline
\hline
Configuration & $p=10$ & $p=200$ \\
\hline
baseline (all enabled) & $1.057$e-04 & $1.652$e-02 \\
\hline
\multicolumn{3}{l}{\textit{Single component disabled}} \\
\hline
clip0 (no residual clipping) & $1.057$e-04 & $3.159$e-03 \\
out0 (no outlier removal) & $1.539$e-04 & $1.692$e-02 \\
eta0 (no $\eta$-regularization) & $1.057$e-04 & $1.652$e-02 \\
gradclip0 (no gradient norm clipping) & $1.931$e-04 & $-$ \\
\hline
\multicolumn{3}{l}{\textit{Two components disabled}} \\
\hline
clip0\_gradclip0 & $1.757$e-04 & $-$ \\
gradclip0\_out0 & $1.894$e-04 & $-$ \\
gradclip0\_eta0 & $1.931$e-04 & $-$ \\
\hline
\multicolumn{3}{l}{\textit{Three or more components disabled}} \\
\hline
clip0\_gradclip0\_out0 & $1.870$e-04 & $-$ \\
clip0\_gradclip0\_eta0 & $1.757$e-04 & $-$ \\
gradclip0\_out0\_eta0 & $1.894$e-04 & $-$ \\
clip0\_gradclip0\_out0\_eta0 & $1.870$e-04 & $-$ \\
\hline
\hline
\end{tabular}
}
\end{center}
\caption{Ablation study for the simple $p$-Laplacian PINNs (Aronsson
  example on square domain).  Each row disables one or more
  stabilization techniques.  "$-$" indicates that training diverged and therefore produced no finite MSE value.}\label{tbl:ablation-plaplace}
\end{table}

Gradient-norm clamping is the most critical component. Disabling it causes
training to diverge at $p=200$. At $p=10$, disabling gradient-norm clamping
degrades performance in every tested combination that omits it. Outlier
removal has a modest effect at $p=10$ but becomes more important at $p=200$.
In contrast, $\eta$-regularization has little effect at either value of $p$.

\section{Iterative training schedule for the $p$-Laplacian}
\label{app:iter-schedule}

Table~\ref{tbl:iter-schedule} details the five-band continuation strategy used for the iterative $p$-Laplacian experiments in Section~\ref{subsec:plaplacian-dbc}. 
The finite-$p$ validation on the unit disc uses the separate continuation schedule reported in Table~\ref{tbl:iter-schedule-disc}.

Each band loads the best-model checkpoint
produced by the previous band and trains on the next group of $p$ values. The
first band uses the same scheduler as the infinity-Laplacian experiments and
may terminate early when both the validation boundary loss falls below
$10^{-4}$ and the PDE residual loss falls below $10^{-3}$. Bands~2--5 use
fixed values of $\alpha$ and a constant learning rate of $10^{-5}$ for a
prescribed number of epochs at each value of $p$.

\begin{table}[h]
\begin{center}
\scalebox{0.78}{
\begin{tabular}{c|l|c|l|l}
\hline\hline
Band & $p$ values & Epochs/$p$ & LR schedule & $\alpha$ schedule \\
\hline
1 & $2,3,\dots,10,15,20$ (11 values) & 100 &
  Cosine $10^{-3}\!\to\!10^{-5}$ &
  AlphaPlateauScheduler (adaptive) \\
2 & $25,30,40,50$ & 20 &
  Constant $10^{-5}$ &
  Fixed $\alpha=10^{-2}$ \\
3 & $60,70,80,90,100$ & 20 &
  Constant $10^{-5}$ &
  Fixed $\alpha=10^{-3}$ \\
4 & $150,200,300,400,500$ & 20 &
  Constant $10^{-5}$ &
  Fixed $\alpha=10^{-4}$ \\
5 & $600,700,800,900,1000$ & 20 &
  Constant $10^{-5}$ &
  Fixed $\alpha=10^{-5}$ \\
\hline\hline
\end{tabular}
}
\end{center}
\caption{\small Five-band iterative training schedule for the homogeneous $p$-Laplacian
experiments.  Band~1 uses early stopping; bands~2--5 run for a fixed number
of epochs per $p$ value.}
\label{tbl:iter-schedule}
\end{table}

The unit-disc experiment in Section~\ref{subsec:pinn-finite-p-disc} uses the
same four-hidden-layer network with $128$ neurons per layer, $\tanh$
activation, Adam optimizer, and boundary and PDE batch sizes of $400$ and
$1000$, respectively. The training residual is based on the flux-regularized
$p$-Poisson operator with $\eta=10^{-5}$. The residual is clipped to
$[-100,100]$, the largest $2\%$ of the pointwise PDE losses are removed, and
both the solution-gradient cap and parameter-gradient clipping threshold are
set to $1$. The first continuation stage uses a $20\%$ boundary-validation split and may
terminate early when the boundary and PDE loss thresholds are satisfied.
Subsequent stages use all boundary points and the final checkpoint from the
preceding stage. Interior collocation points are resampled at every epoch.

\begin{table}[h]
\begin{center}
\scalebox{0.70}{
\begin{tabular}{c|l|c|r|l|l}
\hline\hline
Stage & $p$ values & Epochs/$p$ & Interior points
& LR schedule & $\alpha$ schedule \\
\hline
1 & $2,3,\ldots,10$ & up to $100$ & $783{,}764$
& Cosine $10^{-3}\!\to\!10^{-5}$
& AlphaPlateauScheduler \\

2 & $11,12,\ldots,20$ & $50$ & $195{,}496$
& Cosine $10^{-4}\!\to\!10^{-5}$
& Fixed $10^{-2}$ \\

3 & $20$ (refinement) & $50$ & $783{,}764$
& Cosine $10^{-4}\!\to\!10^{-5}$
& Fixed $10^{-2}$ \\

4 & $25,30,40,50$ & $50$ & $195{,}496$
& Cosine $10^{-4}\!\to\!10^{-5}$
& Fixed $10^{-2}$ \\

5 & $60,70,80,90,100$ & $50$ & $195{,}496$
& Cosine $10^{-4}\!\to\!10^{-5}$
& Fixed $10^{-2}$ \\

6 & $150,200,300,400,500$ & $50$ & $195{,}496$
& Constant $10^{-5}$
& Fixed $10^{-2}$ \\

7 & $600,700,800,900,1000$ & $50$ & $195{,}496$
& Constant $10^{-5}$
& Fixed $10^{-2}$ \\
\hline\hline
\end{tabular}
}
\end{center}
\caption{\small Continuation schedule used for the direct finite-$p$ iterative PINN
validation on the unit disc. The experiment uses $10{,}000$ boundary points.
Stage~1 uses $8{,}000$ training and $2{,}000$ validation boundary points;
subsequent stages use all $10{,}000$ boundary points.}
\label{tbl:iter-schedule-disc}
\end{table}

\section{Eikonal PINNs training for DeepONet data generation}
\label{app:eikonal-pinns}

The DeepONet examples in Sections~\ref{subsec:deeponet-dist2orig} and~\ref{subsec:deeponet-dist2bdry} 
use the limiting solution $u_\infty$ as the training data for $p=\infty$.
When the exact solution is unavailable, we train a PINN to solve the Eikonal equation
$$
|\nabla u|^2=1 \quad \text{in }\Omega.
$$
For the distance-to-origin examples on the disc and square, we impose $u=0$ at the origin.
For the distance-to-boundary examples, including the 2D and 3D domains, we impose $u=0$ on $\partial\Omega$.

\paragraph{Positivity constraint}
Since we know the distance solution must satisfy $u\geq0$, we add a soft positivity constraint
$$
r_{\mathrm{pos}} = w_{\mathrm{pos}}\operatorname{ReLU}(-u_\theta),
$$
where $w_{\mathrm{pos}}$ is listed in Table~\ref{tbl:eikonal-domains}.
Top-$2\%$ outlier removal is applied only to the Eikonal residual, 
while the positivity residual is not affected by stabilization techniques.

\paragraph{Shared hyperparameters}
The Eikonal PINNs use the same configuration as the infinity- and
$p$-Laplacian PINNs listed in~\ref{app:hyperparams}. The network has four
hidden layers with $128$ neurons each and uses the $\tanh$ activation function
and Xavier normal initialization. Training uses Adam with an initial learning
rate of $10^{-3}$, cosine-annealed to $10^{-5}$ over $100$ epochs, together
with a $20\%$ boundary-validation split. The boundary and PDE batch sizes are
$400$ and $1000$, respectively, and the gradient norm is clipped to $1.0$.
The loss weight $\alpha$ uses the same adaptive scheduler. We also clip the PDE
residual to $[-10,10]$ and remove the largest $2\%$ of the PDE losses.

Table~\ref{tbl:eikonal-domains} reports the grid sizes, positivity weight, training time, and best test MSE for each domain.

\begin{table}[h]
\captionsetup{font=small}
\begin{center}
\scalebox{0.78}{
\begin{tabular}{l|r r r|c|r|c}
\hline\hline
Domain & Interior & Boundary & Test & $w_{\text{pos}}$ & Time (s) & Test MSE \\
\hline
\multicolumn{7}{l}{\emph{Distance-to-origin (2D)}} \\
Disc    & 783{,}764   & 1{,}100  & 7{,}668  & 1.0   & 524  & $5.948\times10^{-7}$ \\
Square  & 1{,}000{,}000 & 1{,}100  & 10{,}000 & 1.0   & 691  & $2.985\times10^{-7}$ \\
\hline
\multicolumn{7}{l}{\emph{Distance-to-boundary (2D)}} \\
Disc      & 783{,}764     & 10{,}000 & 7{,}668  & 1.0   & 523  & $5.749\times10^{-7}$ \\
Square    & 1{,}000{,}000 & 40{,}000 & 10{,}000 & 1.0   & 681  & $1.713\times10^{-6}$ \\
Ellipse~1 & 998{,}912     & 10{,}000 & 1{,}588  & 100.0 & 682  & $2.395\times10^{-6}$ \\
Ellipse~2 & 998{,}852     & 10{,}000 & 1{,}588  & 100.0 & 753  & $5.838\times10^{-6}$ \\
Ellipse~3 & 998{,}511     & 10{,}000 & 3{,}172  & 100.0 & 694  & $1.077\times10^{-5}$ \\
\hline
\multicolumn{7}{l}{\emph{Distance-to-boundary (3D)}} \\
Ball   & 507{,}376 & 20{,}000 & 61{,}432 & 0.5 & 320  & $1.267\times10^{-6}$ \\
Cylinder & 766{,}800 & 19{,}998 & 90{,}048 & 0.5 & 524  & $2.865\times10^{-5}$ \\
Torus    & 453{,}208 & 20{,}000 & 54{,}856 & 0.5 & 312  & $3.974\times10^{-6}$ \\
\hline\hline
\end{tabular}
}
\end{center}
\caption{\small Eikonal PINNs training configuration per domain.
Interior and boundary columns give the total number of points
before the $80/20$ train/validation split.
Test MSE is the best-checkpoint mean squared error against the
exact distance function on the listed test grid.}
\label{tbl:eikonal-domains}
\end{table}

\end{document}